\newtheorem{theorem}{Theorem}
\newtheorem{lemma}{Lemma}
\newtheorem{claim}{Claim}
\newtheorem{problem}{Problem}
\newtheorem{prop}{Proposition}
\def \nin {\noindent}
\def \bsk {\bigskip}
\def \hsp {\hspace{0.8em}}
\def \Fm {\mbox{${\mathcal Fm}$}}
\def \fequiv {\!\!\equiv}  
\def \Mm {\mathfrak M}
\def \slashm {\!\slash }
\title{Transposition of variables is hard to axiomatize}
\author{Hajnal Andr\'eka, Istv\'an N\'emeti \ and \
  Zsolt Tuza\footnote
	{Also affiliated with the Department of Computer Science
	and Systems Technology, University of Pannonia,
	Veszpr\'em, Hungary.}\\
  \\
\normalsize
HUN-REN Alfr\'ed R\'enyi Institute of Mathematics\\
\normalsize
 H--1053 Budapest, Re\'altanoda u.~13--15, Hungary }
\begin{document}

\maketitle

\begin{abstract} 
The function $p_{xy}$ that interchanges two logical variables $x,y$ in formulas is hard to describe in the following sense. Let $F$ denote the Lindenbaum--Tarski formula-algebra of a finite-variable first-order logic, endowed with $p_{xy}$ as a unary function. We prove that each equational axiom system for the equational theory of $F$ has to contain, for each finite $n$, an equation that contains together with $p_{xy}$ at least $n$ algebraic variables, and each of the operations $\exists, =, \lor$. This gives an answer to a problem raised by Johnson [{\em J. Symb.\ Logic\/}] in 1969: the class $RPEA_{\alpha}$ of representable polyadic equality algebras of a finite dimension $\alpha\ge 3$ cannot be axiomatized by adding finitely many equations to the equational theory of representable cylindric algebras of dimension $\alpha$. Consequences for proof systems of finite-variable logic and for defining equations of polyadic equality algebras are given. 

The proof uses a family of nonrepresentable polyadic equality algebras ${\cal A}_n$ that are more and more nearly representable as $n$ increases: their $n$-generated subalgebras as well as their proper reducts are representable. The lattice of subvarieties of $RPEA_{\alpha}$ is investigated and new open problems are asked about the interplay between the transposition operations and about generalizability of the results to infinite dimensions.
%
%
%
%
%
\end {abstract}


\section{Introduction} 
\label{s:1}

Manipulating individual variables is a common practice in predicate logic. 
For example, if we change the variables in a sentence to any others, the truth value does not change in so far that we change distinct variables to distinct ones.
This fact is used, for example, in the proof of the prenex normal form theorem of first-order predicate logic. %

We deal with finite-variable first-order logic in this paper. Throughout, except in Section \ref{inf-s}, we assume that  $\alpha\ge 3$ is a finite ordinal.
Finite-variable logic has an extensive literature. It provides insights about the nature of ordinary first-order logic, and, e.g., it is widely used in computer science.%
\footnote{See, e.g., \cite{Da99, Do01, Hod96, RySh22}.}

By {\it transposition of variables} $v,w$ in first-order logic we mean the function that to each formula (possibly with free variables) associates the formula obtained by exchanging all occurrences of the variables $v,w$ in it.
For example, transposing variables $v,w$ in $\exists v(v=w\land R(w,u))$ yields $\exists w(w=v\land R(v,u))$. 

Transposing variables $v,w$ defines a function on the set of all formulas.  It  will be convenient to consider a natural algebra, the {\it algebra of formulas} $\Fm$, on the set of formulas. 

To define this, let us assume that we have infinitely many $\alpha$-place relation symbols in the vocabulary of non-logical symbols, and possibly other relation symbols of smaller arity, but we do not have function or constant symbols. 
Let $V=\{ v_i : i<\alpha\}$ be the set of variables, and let the logical connectives be $\lor, \neg, \exists v_i, v_i=v_j$ for $i,j<\alpha$.
For traditional reasons, we denote the natural operations of $\Fm$ defined by these connectives as $+, -, c_i, d_{ij}$, respectively.  For example, if $\varphi$ is a formula, then $c_i(\varphi)$ is $\exists v_i\varphi$ in $\Fm$. 
Again for traditional reasons, let $p_{ij}$ denote the function on $\Fm$ that transposes the variables $v_i$ and $v_j$ in formulas, and let $\Fm^+$ denote the algebra $\Fm$ endowed with the functions $p_{ij}$ for $i,j<\alpha$ as unary operations. We call the operations $c_i$ and $d_{ij}$ as cylindrifications and diagonal constants, respectively, for their geometrical meaning.%
\footnote{We consider the formula $v_i=v_j$ as a nullary logical connective. For details about the formula-algebra, see \cite{HeMoT:II} or \cite{AGyNS}.}

The following equations (P1)--(P7) are true in $\Fm^+$.  Below, we use $x,y$ as algebraic variables, i.e., they range over all elements of $\Fm$,  
and $i,j,k,l<\alpha$. 
Further, $\tau=[i,j]$ where $[i,j]$ denotes the function that interchanges $i,j$ and leaves all other ordinals fixed.
\begin{description}  
	\item[(P1)]\label{p1} $p_{ij}(x+y) = p_{ij}x + p_{ij}y $.
	\item[(P2)]\label{p2} $p_{ij}(-x) = -p_{ij}x$.
	\item[(P3)]\label{p3} $p_{ij}(c_kx) = c_{\tau(k)}p_{ij}x$.
	\item[(P4)]\label{p4} $p_{ij}d_{kl} = d_{\tau(k)\tau(l)} $.
	\item[(P5)]\label{p6} $p_{ij}p_{kl}x = p_{\tau(k)\tau(l)}p_{ij}x$.
	\item[(P6)]\label{p7} $p_{ij}p_{ij}x = x$.
	\item[(P7)]\label{p5} $p_{ii}x = x$.
\end{description}
Equations (P1)--(P4) come from the definition of interchanging $v_i$ and $v_j$ in a formula as going through the formula from left to right, when we encounter the symbol $v_i$ we change it to $v_j$ and proceed, when we encounter $v_j$ we change it to $v_i$ and proceed, and in all other cases we just proceed.
Equation (P7) expresses that interchanging $v_i$ and $v_i$ this way amounts to doing nothing. Finally, equations (P5)--(P6) come from Bjarni J\'onsson's defining relations for the group of finite permutations of a set in \cite{J62}: all the true equations in terms of transpositions $[i,j]$ in that group can be derived from equations corresponding to (P5)--(P7). 
In fact (P1)--(P7)  are all the equations true in $\Fm^+$ in the sense that an equation is true in $\Fm^+$ if and only if it follows from (P1)--(P7), see Theorem \ref{poz-thm} in Section \ref{log-s}.

However, we are interested in those equations that are true semantically, not just syntactically. For example, the formulas $\exists v_0R(v_0)$ and $\exists v_1R(v_1)$ are semantically equivalent, but not syntactically. Let $\Fm\slash\fequiv$ denote the tautological, or semantic, formula-algebra. That is, $\varphi\equiv\psi$ iff%
\footnote{``iff" abbreviates ``if and only if"} the formula  $\varphi\leftrightarrow\psi$ is valid. It is not hard to see that $\equiv$ is a congruence with respect to the transposition functions $p_{ij}$, i.e., $\varphi\equiv\psi$ implies that $p_{ij}(\varphi)\equiv p_{ij}(\psi)$.%
\footnote{See, e.g., \cite{AmSa} or \cite{AGyNS}. For the notion of tautological formula-algebras, see, e.g., \cite[Sec.\ 4.3]{HeMoT:II} or \cite{AGyNS}.}
The real question is what equations are true in $\Fm^+\slashm\fequiv$.
For example, the following equation is not true in $\Fm^+$, but it is true in $\Fm^+\slashm\fequiv$.
\begin{description}  
	\item[(P8)]\label{p8} $p_{ij}(x\cdot d_{ij}) = x\cdot d_{ij}$.
\end{description}

James Johnson proved in 1969 that the equations true in $\Fm^+\slashm\fequiv$ are harder to describe than those valid in $\Fm^+$:

\bsk
\noindent {\bf Theorem A} (Johnson \cite{Jo}).\label{Jo-thm}
{\it The set of equations valid in $\Fm^+\slashm\fequiv$ is not finitely axiomatizable.}

\bsk
However, this complexity might not be due to the transposition operations, because of the following theorem of James Donald Monk.

\bsk
\noindent {\bf Theorem B} (Monk \cite{Mo}).\label{Mo-thm}
{\it The set of equations valid in $\Fm\slash\fequiv$ is not finitely axiomatizable.}

\bsk

The previous two theorems raise the question whether the transposition operations are finitely axiomatizable {\emph{over}} the set of equations valid in \mbox{$\Fm\slash\fequiv$}. 

\bsk

\noindent {\bf Problem C} {\rm (\cite[second part of Problem 2]{Jo}, \cite[Problem 5.8]{HeMoT:II})}\label{main-p}
	Is there a finite set $\Sigma$ of equations such that the equations true in $\Fm^+\slashm\fequiv$ are exactly those that are derivable from $\Sigma$ together with all the equations true in
	  \mbox{$\Fm\slash\fequiv$}\,? 
\bsk

Problem C is equivalent to asking whether the equational theory of \mbox{$\Fm^+\slashm\fequiv$} can be axiomatized by a set of equations in which the transposition operations occur only finitely many times. In fact, the conjecture was that there is such a set, namely (P1)--(P8) can be taken for $\Sigma$ in Problem C. We prove in this paper that the transposition operations are much harder to describe over the semantic formula-algebra than this: 

\begin{theorem}\label{th:0} Each equational axiom system for the equations true in \hbox{$\Fm^+\slashm\fequiv$} must contain, for each natural number $n$, an equation in which $n$ distinct algebraic variables occur together with at least one transposition operation, at least one cylindrification operation and at least one diagonal constant.
\end{theorem}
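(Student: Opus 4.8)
The plan is to reduce the theorem to the construction of a suitable family $\mathcal A_n$ of ``nearly representable'' polyadic equality algebras, plus a short counting argument. Recall that the equations valid in $\Fm^+\slashm\fequiv$ are exactly those valid in the variety $RPEA_\alpha$ of representable polyadic equality algebras of dimension $\alpha$, and record the trivial fact: if an equation $e$ is valid in $RPEA_\alpha$ and uses only operation symbols from a subsignature $\sigma\subseteq\{+,-,c_i,d_{ij},p_{ij}\}$, then $e$ is valid in the $\sigma$-reduct of every representable polyadic equality algebra (the reduct has the same universe and interprets the surviving operations the same way), and hence in every subalgebra of such a reduct. I would then construct, for every natural number $n$, a polyadic equality algebra $\mathcal A_n$ of dimension $\alpha$ with the five properties: (a) $\mathcal A_n\notin RPEA_\alpha$; (b) every subalgebra of $\mathcal A_n$ generated by at most $n$ elements belongs to $RPEA_\alpha$; (c) the $\{+,-,c_i,d_{ij}\}$-reduct of $\mathcal A_n$ embeds into the corresponding reduct of a polyadic equality set algebra $\langle\wp({}^{\alpha}U),\cup,\neg,C_i,D_{ij},P_{ij}\rangle$; (d) likewise the $\{+,-,c_i,p_{ij}\}$-reduct of $\mathcal A_n$ is representable; (e) likewise the $\{+,-,d_{ij},p_{ij}\}$-reduct of $\mathcal A_n$ is representable. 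In the words of the abstract, (b)--(e) say that the $n$-generated subalgebras and all three proper reducts of $\mathcal A_n$ are representable.

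Granting such a family, fix $n$ and let $\Sigma$ be any equational axiom system for the equations valid in $\Fm^+\slashm\fequiv$. By (a), $\mathcal A_n$ cannot satisfy all of $\Sigma$ (else, $RPEA_\alpha$ being a variety, $\mathcal A_n$ would be representable), so choose $e\in\Sigma$ failing in $\mathcal A_n$. If $e$ had at most $n$ distinct algebraic variables, then under any valuation of them in $\mathcal A_n$ the chosen values would generate an at most $n$-generated, hence by (b) representable, subalgebra, which satisfies $e$; as the valuation is arbitrary this forces $\mathcal A_n\models e$, a contradiction, so $e$ has more than $n$ distinct algebraic variables. If $e$ contained no transposition symbol, its subsignature would lie in $\{+,-,c_i,d_{ij}\}$; since by (c) the corresponding reduct of $\mathcal A_n$ embeds into a reduct of a set algebra, the recorded fact gives that this reduct satisfies $e$, and as $e$ uses only those operations, $\mathcal A_n\models e$ --- a contradiction. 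The same argument with (e) in place of (c) shows $e$ must contain at least one cylindrification, and with (d) in place of (c) shows $e$ must contain at least one diagonal. Thus $e$ is an equation in which more than $n$ (in particular $n$) distinct algebraic variables occur together with at least one transposition, at least one cylindrification and at least one diagonal; since $n$ was arbitrary, this proves the theorem.

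Everything therefore hinges on the construction of the $\mathcal A_n$, which is where I expect the real difficulty to lie. A plausible route is to start from a concrete representable cylindric set algebra (or from a representable algebra coordinatized by a finite combinatorial gadget) and to equip it with transposition operations $p_{ij}$ that obey (P1)--(P7), so that $\mathcal A_n$ is a genuine polyadic equality algebra, but that deviate from the honest coordinate-permutation action on a small, globally interlinked part of the algebra. The deviation must be engineered so that it is detectable only by a configuration of more than $n$ elements that interact through cylindrifications, diagonals \emph{and} transpositions simultaneously: then no subalgebra on $\le n$ generators captures it, deleting the transpositions removes it outright (the cylindric reduct being chosen representable from the start), and deleting the cylindrifications or the diagonals severs the linkage through which it is visible, while enough of the honest structure survives that the three proper reducts and all the small subalgebras admit explicit representations. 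Carrying this out --- simultaneously certifying non-representability of $\mathcal A_n$ and exhibiting representations of all three proper reducts and of all $\le n$-generated subalgebras --- is the technical heart of the argument; I would expect it to rest on a combinatorial core in the spirit of Monk's non-finite-axiomatizability algebras and their later refinements, together with hands-on verifications of the required representations.
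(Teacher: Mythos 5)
Your reduction is exactly the one the paper uses: Theorem~\ref{th:0} is obtained from the algebraic statement via Proposition~\ref{pse-prop} (equations valid in $\Fm^+\slashm\fequiv$ are those valid in $Pse_{\alpha}$), and the counting-plus-reducts argument you give --- an axiom failing in ${\cal A}_n$ must have more than $n$ variables because $n$-generated subalgebras are representable, and must mention all three kinds of extra-Boolean operations because all three proper reducts are representable --- is precisely how the paper deduces Theorem~\ref{t:1} from Theorem~\ref{t:2}. That part of your argument is sound. (One small point the paper has to address and you sidestep by fiat: Theorem~\ref{t:2}(i) only gives non-isomorphism to a set algebra, and upgrading this to non-representability uses the discriminator term $c_0\cdots c_{\alpha-1}x$ together with (ii); you simply postulate non-representability as your clause (a), which is legitimate for the reduction but is one more thing the construction must deliver.)

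The genuine gap is that your clause list (a)--(e) \emph{is} Theorem~\ref{t:2}, i.e.\ the entire content of Sections~\ref{s:2}--\ref{s:6}, and your proposal does not prove it; the ``plausible route'' you sketch does not contain the ideas that make it work. Concretely: the paper builds, for each odd prime power $p$, a set algebra from the Lyndon-style partition $R_0,\dots,R_p$ of $U_0\times U_0-Id_{U_0}$ arising from the affine plane $AG(2,p)$ (this pins $|U_0|=p^2$, hence odd, up to isomorphism, via Lemma~\ref{l:x}), splits the atom $R_0\times T$ into $p-1$ ``big'' pieces $Q_k$, and redefines $P_{01}$ so that it fixes each $Q_k$; non-representability then follows from the parity fact (Lemma~\ref{l:y}) that a set of odd size admits no partition of its off-diagonal square into that many symmetric relations of full domain. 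Representability of the $n$-generated subalgebras needs $p>2^{\alpha!n}+1$ so that two of the $Q_k$ are never separated and can be merged (a partition into $p-2$ symmetric big pieces \emph{does} exist), and representability of the diagonal-free reduct requires doubling the base set. None of this is recoverable from your sketch, so as it stands the proposal establishes only the (correct, and same-as-the-paper) derivation of the theorem from an unproven main lemma.
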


Theorem \ref{th:0} provides an answer to Problem C. 
Neither (P1)--(P8), nor any finite $\Sigma$, nor any infinite set in which finitely many algebraic variables occur, nor any infinite set containing infinitely many distinct algebraic variables in which the transposition operations occur infinitely many times but not together with both cylindrifications and diagonal constants, etc., can axiomatize the equational theory of $\Fm^+\slashm\fequiv$. 

Problem C was first asked in 1969 as \cite[second part of Problem 2]{Jo} and it was repeated in 1985 as \cite[Problem 5.8]{HeMoT:II}. In \cite{Jo}, partial results are proved in the direction that (P1)--(P8) might be taken as a finite axiom set for the transposition operations, and the question about (P1)--(P8) is explicitly asked in \cite[p.\ 348]{Jo} and, for $\alpha=3$, as \cite[Problem 5.7]{HeMoT:II}. 
Andr\'eka and N\'emeti, in the unpublished manuscripts \cite{AN84} and \cite{APEA87}, showed that (P1)--(P8) cannot be taken for $\Sigma$ in Problem C, and Andr\'eka and Tuza announced in \cite{ATu88} a negative answer to Problem C.  The present paper contains the first full published proof, and also for a theorem stronger than the one announced earlier.
Discussion of the problem and referencing these results can be found in \cite[p.\ 236 and Remark 5.4.40]{HeMoT:II}, in \cite[p.\ 725]{amn} as well as in \cite[p.\ 204]{hh}.

Interest in this problem might stem from a desire for understanding the role of individual variables in first-order logic.  
Two different algebraizations of the semantics of first-order logic are Alfred Tarski's cylindric algebras and Paul Halmos' polyadic algebras; in the former manipulating variables are sort of derived operations while in the latter manipulating variables are explicitly treated. Halmos \cite[p.\ 28]{Ha} writes that ``The exact relations between polyadic algebras and cylindric algebras are of considerable technical interest; they are still in the process of being clarified".%
\footnote{This relationship is still being investigated, see, e.g., \cite{fer, FerAU18, FerND22, Say13, Say13a}.}
In fact, formally, Problem C 
 was asked about the relationship between polyadic and cylindric algebras. We now define these algebras and relate them to $\Fm^+\slashm\fequiv$ and $\Fm\slash\fequiv$.

Polyadic equality set algebras of dimension $\alpha$ are generalizations of Boolean set algebras. 
Their universes consist of subsets of $\alpha$-dimensional spaces instead of arbitrary (1-dimensional) sets, and they have, besides the Boolean set operations of union and complementation, extra operations that come from the geometric nature of an $\alpha$-dimensional space. 

A {\it polyadic equality set algebra of dimension $\alpha$} and with {\it base set} $U$ is an algebra
$$ {\cal A} = \langle A, +, -, C^U_i, D^U_{ij}, P_{ij} \rangle _{i,j < \alpha} $$
where $\langle A, +, - \rangle$ is a Boolean set algebra with unit the $\alpha ^{\rm th}$ power $U^{\alpha}$ of $U$,
hence the elements of the {\em universe}\/ $A$ are subsets of $U \times \cdots \times U$ and $+,-$ are the set-theoretic union and complementation with respect to ${U}^\alpha$, the {\em 
cylindrifications}\/ $C^U_i$ are unary operations acting as
$$ C^U_i(X) = \{(u_l)_{l < \alpha} \in {U}^\alpha \mid (\forall j\ne i)u_j = u_j ^{\prime},\mbox{ for some }(u_l ^{\prime})_{l < \alpha} \in X 
 \} $$
for every $X \in A$, the $D^U_{ij}$ are the {\em diagonal constants}
$$D_{ij}^{U}=\{(u_l)_{l < \alpha} \in {U}^\alpha \mid u_i = 
u_j \}$$
and the {\em polyadic transposition operations}\/ $P_{ij}$ are unary operations 
with
$$\begin{array}{rcl}
	P_{ij}(X) = \{(u_l)_{l < \alpha} \in {U}^\alpha & \mid 
	&  u_i = u_j ^{\prime} \; \wedge \; u_j = u_i ^{\prime} \wedge \; (\forall l < \alpha , \: i \neq l \neq j ) u_l = u_l ^{\prime}, \\
	 &  & \mbox{for some }(u_l ^{\prime})_{l < \alpha} \in X    
	\} .
\end{array}$$
The set $A$ is supposed to be closed under the operations $+$, $-$, 
$C^U_i$ and $P_{ij}$, and it has to contain the constants $D^U_{ij}$.
Of the three extra-Boolean operations, the cylindrifications and the diagonal constants depend on the base set $U$, while the transposition operations do not depend on the base set. We often omit the superscript $U$ referring to the base set when this is not likely to cause confusion.

The geometric meaning of $C_i^U$ is translation parallel with the $i$th axis, the diagonal constant $D^U_{ij}$ is the $ij$-diagonal set, and $P_{ij}$ is (orthogonal) reflection to this $ij$-diagonal.  See Figure \ref{pse2-fig}. 

\begin{figure}[h]
	\begin{center}

		\begin{tikzpicture}[scale=1]
			\draw[->] (0,0) -- (4,0) node[right] {$1$};
			\draw[->] (0,0) -- (0,4) node[above] {$0$};
			\draw[-] (0,0) -- (4,4) node[right] {$D_{01}^U$};
			\draw[-] (0,1.4) -- (4,1.4) {};
			\draw[-] (0,2.2) -- (4,2.2) {};
			\draw[pattern=mynewdots] (0,1.4) rectangle (4,2.2);
			\draw[white] (4,1.4) -- (4,2.2) {};
			\node[right] at (4,1.8) {$C_{1}^{U}X$};
			\draw[thick] (1,1.4) -- (2.8,2.2) {};
			\draw[thick] (1.4,1) -- (2.2,2.8) {};
			
			\draw[->]  (3.5,2.6) to [bend left=25]  (2.5,2);
			\node[right] at (3.4,2.7) {$X$};
			
			\draw[->]  (1.5,3.5) to [bend right=75]  (2,2.6);
			\node[right] at (1.5,3.5) {$P_{01}X$};
			\draw[decorate,decoration={brace,amplitude=5pt}] (-0.3,0) -- (-0.3,3.95) node [black,midway,xshift=-0.4cm] {$U$};
			
			\draw[decorate,decoration={brace,amplitude=5pt,mirror}] (0, -0.3) -- (3.95, -0.3) node [black,midway,yshift=-0.4cm] {$U$};

		\end{tikzpicture}
		\caption{The extra-Boolean operations of a $Pse_2$}
	\end{center}
	\label{pse2-fig}
\end{figure}
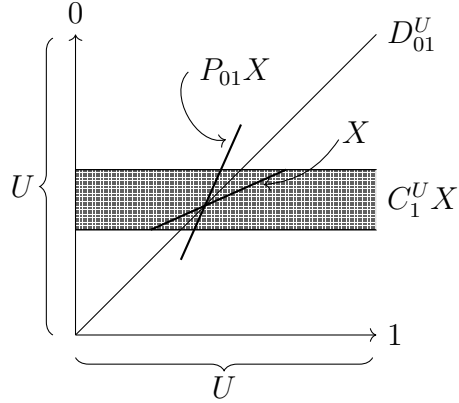

In Codd's relational database model, $P_{ij}$ corresponds to interchanging the $i$th and $j$th columns, this can be expressed by Codd's renaming, inserting and deleting operations. For more on this, see \cite{Due}.

A {\it cylindric set algebra of dimension $\alpha$} and with {\it base set} $U$ is an algebra
$$ {\cal A} = \langle A, +, -, C^U_i, D^U_{ij}\rangle _{i,j < \alpha} $$
where $\langle A, +, - \rangle$ is a Boolean set algebra with unit $U^{\alpha}$.
The set $A$ is supposed to be closed under the operations $+$, $-$, and
$C^U_i$, and it has to contain the constants $D^U_{ij}$.

The classes of $\alpha$-dimensional polyadic equality set algebras and  cylindric set algebras are denoted by $Pse_{\alpha}$ and $Cs_{\alpha}$, respectively. Polyadic and cylindric  algebras are intensively investigated%
\footnote{See, e.g., \cite{amn, Geo79, HeMoT:II, hh, KhSa, Sag, Say11}.}.

In its formal language, the operations of a polyadic equality set algebra are denoted as those of $\Fm^+$, i.e., with $+,-,c_i,d_{ij},p_{ij}$. Similarly, the operations of a cyindric set algebra are denoted with $+,-,c_i,d_{ij}$.
It is known that an equation is true in $\Fm^+\slashm\fequiv$ if and only if it is true in $Pse_{\alpha}$, and an equation is true in $\Fm\slash\fequiv$ if and only if it is true in $Cs_{\alpha}$, see Proposition \ref{pse-prop} in Section \ref{log-s}. 
  
Johnson's and Monk's previously mentioned classic results were formulated about polyadic and cylindric algebras of finite dimension $\alpha \ge 3$. Monk \cite{Mo} proves that the equational theory of $Cs_{\alpha}$ is not finitely axiomatizable, and Johnson \cite{Jo} proves that the equational theory of $Pse_{\alpha}$ is not finitely axiomatizable. 
By an {\it equational axiom set} for $Pse_{\alpha}$ we mean a set of equations true in $Pse_{\alpha}$ from which all equations true in $Pse_{\alpha}$ are derivable.
The second part of Problem 2 in \cite{Jo}, let us call it Problem 2b, raises the problem whether or not $Pse_{\alpha}$ is finitely axiomatizable over $Cs_{\alpha}$; this is  equivalent to asking whether or not $Pse_{\alpha}$ has an equational axiom set in which the transposition
operations occur only finitely many times.
In these terms, our Theorem \ref{th:0} gives the following negative answer to \cite[Problem 2b]{Jo}. 
Let $\omega$ denote the smallest infinite ordinal, so $\alpha<\omega$ means that $\alpha$ is finite.

\begin{theorem}   \label{t:1}
	In every equational axiom set for  $Pse_{\alpha}$ $(3 \leq \alpha < \omega)$ there is a set\/ $\{e_n\}_{n<\omega}$ of axioms such that, for each\/ $n$,  more than\/ $n$ distinct algebraic variables occur in\/ $e_n$ together with some  transposition $p_{ij}$, some diagonal constant $d_{kl}$ and some cylindrification $c_m$.
\end{theorem}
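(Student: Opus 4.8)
The plan is to deduce the theorem from the construction of one family of algebras, and to put all the real work into that construction. Concretely, for each $n<\omega$ I will build an algebra $\mathcal A_n$ of the similarity type of $Pse_\alpha$ enjoying four properties: (i) $\mathcal A_n$ is not a model of the equational theory of $Pse_\alpha$ (equivalently $\mathcal A_n$ is not representable, using that $RPEA_\alpha$ is a variety); (ii) every subalgebra of $\mathcal A_n$ generated by at most $n$ elements belongs to $\mathbf{ISP}\,Pse_\alpha$, i.e.\ is representable; (iii) the cylindric reduct of $\mathcal A_n$ (forgetting all $p_{ij}$) is representable; and (iv) the diagonal-free reduct of $\mathcal A_n$ (forgetting all $d_{ij}$) and the cylindrification-free reduct of $\mathcal A_n$ (forgetting all $c_i$) are each representable in the corresponding reduct class. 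Thus the $\mathcal A_n$ are nonrepresentable but ``more and more nearly representable'' as $n$ grows, and moreover every proper reduct of each $\mathcal A_n$ is already representable.

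Granting such a family, the theorem follows by a short Jónsson--Monk style argument. Let $\Lambda$ be any equational axiom set for $Pse_\alpha$ and fix $n$. By (i), $\mathcal A_n$ refutes some equation $e_n\in\Lambda$; I claim $e_n$ has the required form. It must contain some transposition $p_{ij}$: otherwise $e_n$ is of cylindric type and valid in $Pse_\alpha$, hence valid in every representable cylindric algebra (every cylindric-type equation true in $Pse_\alpha$ is true in $Cs_\alpha$, since a $Cs_\alpha$ over base $U$ embeds, as a cylindric algebra, into the cylindric reduct of the $Pse_\alpha$ it generates inside the full $Pse_\alpha$ over $U$, whence it holds throughout $RCA_\alpha$), so by (iii) it holds in $\mathcal A_n$ --- contradiction. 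The same reasoning with (iv) in place of (iii) forces $e_n$ to contain some diagonal constant $d_{kl}$ (else its diagonal-free reduct, being representable, already satisfies it) and some cylindrification $c_m$ (else its cylindrification-free reduct does). Finally $e_n$ has more than $n$ distinct algebraic variables: if it had at most $n$, then under any evaluation of its variables in $\mathcal A_n$ both sides land in the subalgebra generated by those $\le n$ values, which by (ii) is representable and hence satisfies $e_n$ (equations true in $Pse_\alpha$ are preserved under $\mathbf S$ and $\mathbf P$); so $\mathcal A_n\models e_n$, contradicting (i). Collecting the $e_n$ over all $n$ yields the set $\{e_n\}_{n<\omega}\subseteq\Lambda$ required by Theorem~\ref{t:1}, and Theorem~\ref{th:0} then follows by the translation of Proposition~\ref{pse-prop}.

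The whole difficulty is therefore concentrated in building the $\mathcal A_n$ and verifying (i)--(iv); the earlier non-finite-axiomatizability theorems (Johnson's Theorem~A in particular) only need something like (i)--(ii), so the genuinely new point is arranging (iii)--(iv) as well. I would code the algebras from a suitable relational/combinatorial structure $\mathfrak M$ so that representability of $\mathcal A_n$ as a set algebra demands a global coloring/embedding condition (in the spirit of the chromatic obstructions used in Monk-type arguments) which provably fails, yet fails only when sufficiently many elements are present --- this gives (i). Property (ii) I would obtain by a local completion lemma: any finite partial picture involving at most $n$ generators amalgamates into an honest $Pse_\alpha$-representation, proved by a back-and-forth/step-by-step construction. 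Properties (iii) and (iv) I would obtain by exhibiting explicit representations of each proper reduct, exploiting that the obstruction disappears once an operation is removed --- without $d_{ij}$ one can spread the base set to separate the conflicting points, without $c_i$ the surviving Boolean-plus-transpositions structure is essentially free and trivially representable, and without $p_{ij}$ the algebra is a Monk-style \emph{representable} cylindric algebra. I expect the main obstacle to be satisfying (i) and (ii) simultaneously: the algebra must be nonrepresentable and yet so locally innocuous that no bounded-variable equation can see the defect, which is precisely the tension underlying the whole finite-axiomatizability question; and reconciling this with (iii)--(iv) --- the defect must also be invisible to every proper reduct --- is exactly what compels a transposition, a diagonal and a cylindrification, together with unboundedly many variables, to reappear in any axiom system.
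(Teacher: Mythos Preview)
Your reduction is exactly the paper's: Theorem~\ref{t:1} is deduced from the existence of algebras $\mathcal A_n$ satisfying your (i)--(iv), which is precisely the paper's Theorem~\ref{t:2}, and your contrapositive derivation (any refuted axiom must contain a $p_{ij}$, a $d_{kl}$, a $c_m$, and more than $n$ variables, else the corresponding reduct or small subalgebra would already validate it) matches the paper's argument after Theorem~\ref{t:2}.

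One remark on the speculative part of your outline: the paper's actual construction is not a Monk-style chromatic obstruction but a parity argument. It takes an affine plane $AG(2,p)$ ($p$ an odd prime power), uses its parallel classes to get relations $R_0,\dots,R_p$ forcing any representation to have blocks of odd size $p$, then splits the atom $R_0\times T$ into $p-1$ pieces $Q_k$ and redefines $P_{01}$ to fix each $Q_k$; nonrepresentability follows because a block of size $p$ cannot carry $p-1$ disjoint symmetric irreflexive relations each with full domain (Lemma~\ref{l:y}). Representability of $n$-generated subalgebras comes not from a step-by-step amalgamation but from a pigeonhole: for $p$ large enough two $Q_k$'s are unseparated, and $R_0\times T$ \emph{can} be split into $p-2$ symmetric big pieces. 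Your intuitions for the reducts are close --- the transposition-free reduct is literally a set algebra by construction, and the diagonal-free case is handled by doubling the base set as you guessed --- but the cylindrification-free reduct is not ``essentially free and trivially representable''; the paper gives an explicit symmetric (non-big) splitting of $R_0\times T$ to represent it.
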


A variant of this result states that the single polyadic operation $P_{01}$ is 
equally hard to describe, i.e., the class of algebraic structures of the 
form
$$ {\cal A} = \langle A, +, -, C_i, D_{ij}, P_{01} \rangle _{i,j < \alpha} $$
has no equational axiom set in which $p_{01}$ occurs finitely many times. 

We note that the first part of \cite[Problem 2]{Jo} is already answered in the negative in \cite{AAPAL97}. In light of \cite{AGyN25}, the first part of \cite[Problem 2]{Jo} is  equivalent to asking whether $Pse_{\alpha}$ has an axiom set in which the diagonal constants occur only finitely many times. Thus Theorem \ref{t:1} above gives an answer to both parts of \cite[Problem 2]{Jo}.

Theorem \ref{th:0} follows from Theorem \ref{t:1} 
by Proposition \ref{pse-prop} in Section \ref{s:7}. 
We deduce Theorem \ref{t:1} from a slightly different result given below.
An algebra  ${\cal A^\prime}$ 
is {\it $n$-generated} if there is a subset $X$ of its universe $A'$ such that $|X|\le n$ and there is no proper subset of $A^\prime$ that contains $X$ and is closed under all operations of ${\cal A}^\prime$. 
We call an algebra {\it polyadic-type} if it is similar to members of $Pse_{\alpha}$, i.e., if it has operations $+,-,c_i,d_{ij}, p_{ij}$ for $i,j<\alpha$, with the corresponding arities.
The {\it cylindrification-free}, the {\it diagonal-free}, and the {\it transposition-free, or  cylindric, reducts} of an algebra $\langle A,+,-,c_i,d_{ij}, p_{ij}\rangle_{i,j<\alpha}$ are $\langle A,+,-,d_{ij}, p_{ij}\rangle_{i,j<\alpha}$, $\langle A,+,-,c_i, p_{ij}\rangle_{i,j<\alpha}$, and $\langle A,+,-,c_i,d_{ij}\rangle_{i,j<\alpha}$, respectively. 

\goodbreak

\begin{theorem}   \label{t:2}
For every\/ $\alpha$ and\/ $n$ $(3 \leq \alpha < \omega$, $n < \omega)$ 
there exists a polyadic-type algebra ${\cal A}$
with the following \vspace{0.06in} properties.

\medskip

\noindent
{\bf (i)}\quad ${\cal A}$ is not isomorphic to a member of $Pse_{\alpha}$.

\medskip

\noindent
{\bf (ii)}\quad Each $n$-generated subalgebra of ${\cal A}$ is 
isomorphic to a member of $Pse_{\alpha}$.

\medskip

\noindent
{\bf (iii)}\quad 
The cylindrification-free, the diagonal-free, and the transposition-free reducts of ${\cal A}$ are all 
isomorphic to subalgebras of the corresponding reducts of members of $Pse_{\alpha}$.
\end{theorem}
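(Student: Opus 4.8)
The plan is to build $\mathcal A$ by a ``twisting'' or ``splitting'' construction on top of a cylindric-type base algebra that is already known to be nonrepresentable only because of the transposition operations. First I would recall (or construct, à la Monk) a sequence of cylindric set algebras and a nonrepresentable cylindric-type algebra $\mathcal B_n$ whose $n$-generated subalgebras and whose diagonal-free and cylindrification-free reducts are representable; the classical ``Monk-like'' algebras obtained by disrupting a large but finite-chromatic-number graph/coloring do exactly this, because relations among fewer than $n$ generators cannot ``see'' the global obstruction. On such a $\mathcal B_n$ one then has to define the transposition operations $p_{ij}$ so that (P1)--(P8) hold (so the cylindric-free and the other proper reducts stay inside $Pse_\alpha$ — note that for the cylindrification-free reduct one needs the $p_{ij}$ together with $d_{ij},+,-$ to be representable, and similarly for the diagonal-free reduct), while the full algebra fails (i). The natural way to get failure of (i) is to make the $p_{ij}$ act on the ``defective'' part of $\mathcal B_n$ in a way that is algebraically consistent with (P1)--(P8) but geometrically impossible: the $p_{01}$-orbit of some element, combined with cylindrifications and a diagonal $d_{01}$, forces via (P8) an equation that no set algebra can satisfy on the obstruction-carrying elements.

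Concretely, I expect the construction to go through a Boolean algebra $A$ that is a (possibly incomplete) field of subsets of $U^\alpha$ for a carefully chosen $U$, where the ``true'' cylindric operations are used on most elements but are perturbed on a small, finitely-many-generator-invisible family $E$ of elements by composing with a permutation of coordinates; equivalently, one takes a representable $Pse_\alpha$ and replaces $C_i$ and $D_{ij}$ by $\pi\circ C_i\circ\pi^{-1}$-type operations outside the subalgebra generated by any $n$ elements. One then verifies: (a) all polyadic-equality axioms that do not simultaneously involve a $c_i$, a $d_{kl}$ and enough variables still hold — this is what gives (iii) and the representability of $n$-generated subalgebras in (ii); (b) there is a specific element $b\in\mathcal A$ such that the term $p_{01}(b\cdot d_{01})$ does not equal $b\cdot d_{01}$, so (P8) fails and hence $\mathcal A\notin Pse_\alpha$, giving (i). Step (ii) will require a separate argument that being generated by $\le n$ elements localizes you into the ``honest'' part: one shows every $\le n$-generated subalgebra is term-equivalent, via an isomorphism, to a genuine subalgebra of a $Pse_\alpha$, typically by exhibiting a representation directly from a coloring/amalgamation argument on the finitely many atoms or generators involved.

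The main obstacle I anticipate is \emph{simultaneously} achieving (ii) and (iii) with genuine \emph{polyadic-type} operations: the transposition operations are globally rigid (they satisfy the many equations (P1)--(P7), and $p_{ij}$ is in particular a Boolean automorphism of period two intertwining the $c_k$'s and permuting the $d_{kl}$'s), so one has very little freedom in how to perturb them, and any perturbation that breaks representability of the full algebra threatens to break it already on small subalgebras or on a proper reduct. Reconciling ``nonrepresentable as a whole'' with ``every $n$-generated part and every proper reduct representable'' is the crux; I would handle it by making the obstruction \emph{spread across all of $\{c_i\}$, $\{d_{ij}\}$ and $\{p_{ij}\}$ at once and across $\ge n$ coordinates}, so that deleting any one family of operations, or restricting to $\le n$ generators, collapses the obstruction — this is precisely the shape of conclusion (iii) and of Theorem~\ref{th:0}, and it is the reason the theorem is stated with all three operation-families and with the growing variable count. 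The remaining verifications — that (P1)--(P7) hold on $\mathcal A$, that the reducts embed into reducts of $Pse_\alpha$, and that $\le n$-generated subalgebras are representable — I expect to be technical but routine graph-colouring / set-algebra bookkeeping once the base algebra and the perturbation are pinned down.
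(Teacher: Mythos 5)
Your high-level intuition is right in two respects: the perturbation must live on a part of the algebra that small subalgebras cannot see, and the obstruction must be spread across all three operation families so that deleting any one of them collapses it. But the concrete mechanisms you propose are self-defeating. First, you suggest witnessing (i) by making (P8) fail on some element $b$. That cannot work: (P8) is a one-variable equation in the signature $\{+,-,d_{01},p_{01}\}$, so its failure would already make the $1$-generated subalgebra generated by $b$ nonrepresentable (destroying (ii) for every $n\ge 1$) and would make the cylindrification-free reduct nonrepresentable (destroying (iii)). The entire point of the theorem is that every equation separating ${\cal A}$ from $Pse_\alpha$ must contain many variables and all three kinds of extra-Boolean operations, so in particular (P1)--(P8) must all \emph{hold} in ${\cal A}$. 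Second, you propose starting from a nonrepresentable Monk-style cylindric algebra, and alternatively perturbing the $C_i$ and $D_{ij}$ by conjugation. Either move kills the transposition-free clause of (iii), which demands that the cylindric reduct of ${\cal A}$ embed into a genuine cylindric set algebra; the paper's construction keeps \emph{all} cylindric operations as honest set operations on $U^\alpha$ and perturbs only the transpositions.

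What is missing is the specific combinatorial obstruction that makes this possible. The paper fixes an odd prime power $p$, uses the Lyndon relation algebra of the affine plane over $GF(p)$ to force (up to isomorphism) a block $W$ of odd size $p$, and splits one atom $R_0\times T$ into $p-1$ ``big'' pieces $Q_0,\dots,Q_{p-2}$ on which the true $P_{01}$ acts as a fixed-point-free involution ($P_{01}Q_k=Q_{p-2-k}$, possible since $p-1$ is even). The perturbed $P^*_{01}$ instead declares each $Q_k$ fixed, i.e.\ symmetric. No set algebra can partition $W\times W-Id_W$ into $p-1$ symmetric irreflexive relations with full domain when $p$ is odd (this is the parity obstruction to properly edge-colouring $K_p$ with $p-1$ colours), which gives (i); yet a partition into $p-2$ symmetric big pieces \emph{does} exist, and any subalgebra generated by at most $n$ elements with $p>2^{\alpha!n}+1$ must fail to separate some two $Q_k$'s, which merges them and restores representability, giving (ii). Dropping the cylindrifications removes the ``big'' requirement, and dropping the diagonals allows doubling the base set so that $p-1$ symmetric big pieces exist; this gives (iii). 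None of this parity machinery, nor the requirement that (P1)--(P8) survive the perturbation, is present in your outline, so as written the proposal does not yield the theorem.
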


We call a polyadic-type algebra {\it representable} iff all equations true in $Pse_{\alpha}$ are true in it. It is known that representable algebras are exactly the subdirect products of polyadic set algebras, and a representable algebra is isomorphic to a member of $Pse_{\alpha}$ iff the formula $x\ne 0\to c_0c_1\dots c_{\alpha-1}x=1$ is true in it (where $0$ and $1$ denote the Boolean $0$ and $1$, respectively).%
\footnote{This is true because the class of algebras isomorphic to a member of $Pse_{\alpha}$ is a discriminator class. For more details we refer to \cite[Sec.\ 2.6.4]{hh} and \cite[Sec.\ 2.7]{AGyNS}.}

To see that Theorem \ref{t:2} indeed implies Theorem \ref{t:1}, 
let $x_1,\dots,x_n$ be algebraic variables and let $\Sigma_n$ denote all the equations true in $Pse_{\alpha}$  that contain at most the variables $x_1,\dots, x_n$ or in which not all kinds of extra-Boolean operations occur. Then $\Sigma_n$ is valid in ${\cal A}$ by (ii) and (iii). On the other hand, 
the formula $x\ne 0\to c_0c_1\dots c_{\alpha-1}x=1$ is true in ${\cal A}$ by (ii). Hence, ${\cal A}$ is not representable, by (i). Since $\Sigma_n$ is true in ${\cal A}$, this means that $\Sigma_n$ is not an equational axiom set for $Pse_{\alpha}$. This latter then means that in each equational axiom set for $Pse_{\alpha}$ there is an equation that contains more than $n$ variables and contains at least one of each extra-Boolean operations.

Let ${\cal A}=\langle A,f_k\rangle_{k\in K}$ be an arbitrary algebra. A reduct of ${\cal A}$ is $\langle A,f_k\rangle_{k\in J}$  where $J\subseteq K$. In this paper, by a {\it set algebra} we mean a subalgebra of a reduct of a polyadic equality set algebra. The {\it base set} of a set algebra is the base set of the corrresponding polyadic equality set algebra.

Sections \ref{s:2}--\ref{s:6} of the paper are devoted to the proof of Theorem \ref{t:2} and its variant where 
just one polyadic operation $P_{01}$ is considered. 
(The former does not imply the latter directly.) 
In Sections \ref{s:2} and \ref{s:3}, we describe the construction of the algebras. 
For each odd prime power $p\ge 3$ we construct a polyadic-type algebra ${\cal A}_p$. In Section \ref{s:4}, we prove that ${\cal A}_p$ is not isomorphic to a set algebra. In this proof from the transposition operations we use only $P_{01}$. 
 
In Section \ref{s:5}, we show that if $p$ is large enough compared to $n$, namely if $p>2^{\alpha!n}+1$, then $n$-generated subalgebras of ${\cal A}_p$ are isomorphic to set algebras. In Section \ref{s:6}, we prove that both the cylindrification-free and the diagonal-free reducts of ${\cal A}_p$ are isomorphic to set algebras. The transposition-free reducts of ${\cal A}_p$ will be set algebras by their construction. 
These add together to a proof of Theorem \ref{t:2}. 

In Section \ref{s:eq}, we exhibit equations $e_p$ that witness that ${\cal A}_p$ is not representable, for odd prime powers $p\ge 3$. In more detail, we exhibit $e_p$ such that $e_p$ is true in all polyadic equality set algebras while it is not true in ${\cal A}_p$. Moreover, we show that $e_q$ is true in ${\cal A}_p$ whenever $q\ne p$. 

In Section \ref{s:var}, we investigate how far the equational theories of $Pse_{\alpha}$ and $Cs_{\alpha}$ are from each other, from a different point of view. Namely, we show that there are continuum many distinct equational theories between the two. Investigating the relevant portion of the lattice of varieties points to a new interesting problem about the complexity of axiom sets for the transposition operations, see Problem \ref{main-prb}.

Section \ref{const-s} contains some information about the antecedents of the construction used in the present paper, and Section \ref{inf-s} briefly describes the statuses of the analogous statements when $\alpha$ is infinite. Problems \ref{folax-p} and \ref{qpea-prob} in these sections ask  how far Theorem \ref{t:1}  can be generalized. Problem \ref{folax-p} asks whether ``equational axiom set" can be changed to ``first-order axiom set" in it and Problem \ref{qpea-prob} asks if the condition ``$\alpha<\omega$" can be relaxed in it.

The last section contains two applications of the results in the paper, one in logic and one in algebra. Theorem \ref{log-t} states that a complete proof system for usual finite-variable logic has to have a high syntactical complexity. In this result, the transposition operations do not occur explicitly.  Theorem~\ref{pea-p} provides a quite intuitive and transparent new equational axiom set for polyadic equality algebras of finite dimension, in terms of the transposition operations.

This paper uses several different branches of mathematics: combinatorics, algebra, logic, geometry. We try to be self-contained in the text and in footnotes we provide references to some background.

\section{The construction of ${\cal A}_p$ for $P_{01}$ and $\alpha = 3$}
\label{s:2}
For the sake of a more transparent explanation, first we consider the case where 
the dimension is 3 and just one transposition operation (namely $P_{01}$) is 
involved in the algebras in question.
Fixing the value of $p$ arbitrarily, first we construct a polyadic equality set algebra
$${\cal A}^s = \langle A, +, -, C_i, D_{ij}, P_{01} \rangle _{i,j < 3} $$
and then modify the effect of $P_{01}$ on a subset of $A$, to obtain an 
operation $P_{01}^*$ yielding an abstract algebra
$${\cal A}^* = \langle A, +, -, C_i, D_{ij}, P_{01}^* \rangle _{i,j < 3}$$
that will turn out to be 
not isomorphic to a set algebra. 
(We write ${\cal A}^*$ instead of ${\cal A}$, in order to express that just 
one polyadic operation is defined on the former, while the latter---to be 
constructed in Section \ref{s:3}---will be an algebra with ${\alpha}^{2}$ 
transposition operations.)

\bsk

\nin
{\bf Step 1.}\hsp 
Choose an odd prime power $p\ge 3$. 
Let  ${ U}$ be a set of cardinality $p^2+p-1$, and let it be partitioned 
into two disjoint sets, ${ U} = 
{ U}_0 \cup { U}_1$, with $|{ U}_0|=p^2$ and $|{ U}_1|=p-1$. 
The algebra ${\cal A}^s$ will have base set $ U$, i.e., the universe $A$ will consist of subsets of $ U \times  U \times 
 U$. 

\bsk

\nin
{\bf Step 2.}\hsp 
Let $AG(2,p)$ be the affine Galois plane with point set ${ U}_0$. 
Denoting by $\Lambda$ the set of lines of $AG(2,p)$, it is well-known%
\footnote{See, e.g., Construction 2.17 in Section VII.2.2 of \cite{HbComb}.} that 
$\Lambda$ can be partitioned into $p+1$ ``parallel classes'' ${\Lambda}_0, 
{\Lambda}_1, \ldots , {\Lambda}_p$, each ${\Lambda}_i$ consisting of $p$ 
mutually disjoint lines $L_{i,0}, \ldots , L_{i,p-1}$ (having $p$ points each).
Note that each parallel class $\Lambda_i$ defines an equivalence relation on ${ U}_0$.

\bsk

\nin
{\bf Step 3.}\hsp 
Decompose
 ${ U}_0 \times { U}_0 - Id_{{ U}_0}$ into $p+1$ symmetric and 
irreflexive binary relations $R_i$ ($i<p+1$) as follows:
$$R_i = \{ (u,v) \in { U}_0 \times { U}_0 - Id_{{U}_0} \mid (u,v) 
\in \bigcup _{0 \leq j<p} (L_{i,j} \times L_{i,j}) \} \vspace{-0.04in} . $$
Here, $Id_{{ U}_0}=\{ (u,u) : u\in { U}_0\}$ denotes the identity relation on ${ U}_0$.

Alternatively, viewing the elements of ${ U}_0$ as ordered pairs $(a,b)$ 
over the Galois field $GF(p)$ of order $p$, and choosing a bijection $h:p\to GF(p)$, we could have set
$$ R_i = \{ ((a,b),(a^\prime,b^\prime)) \mid a,a^\prime,b,b^\prime \in GF(p), \,
a\ne a^\prime , \, b^\prime - b = h(i)(a^\prime - a) \} $$
for $0 \leq i < p$, and
$$R_p = \{((a,b),(a,b^\prime)) \mid a,b,b^\prime \in GF(p), \, b \neq 
b^\prime \}.$$
One can observe that the decompositions of ${ U}_0 \times { U}_0 - Id_{{ U}_0}$ 
obtained in these two ways are in fact isomorphic.

\bsk

\nin
{\bf Step 4.}\hsp 
We partition $R_0 \times { U}_1$ 
into%
\footnote{$S\times X$ where $S$ is a binary relation and $X$ is a set denotes the ternary relation $\{ s : (s_0,s_1)\in S\mbox{ and }s_2\in X\}$. In the following, we will often use such suggestive notation without explicitly defining it.}
 $p-1$ mutually disjoint relations 
$Q_0, Q_1,$ $\ldots,Q_{p-2}$ in such a way that the following properties of the 
cylindrifications and the transposition operation $P_{01}$ be valid for 
every $ k <  p-1$.
  \begin{itemize}
 	\item[$(0)$] $ \quad C_0 (Q_k) \; \: =\ \  { U} \times { U}_0 
 	\times { U}_1$,
 	\item[$(1)$] $ \quad C_1 (Q_k) \; \: =\ \  { U}_0 \times { U} 
 	\times { U}_1$,
 	\item[$(2)$] $ \quad C_2 (Q_k) \; \: =\ \  R_0 \times { U}$,
 	\item[$(3)$] $ \quad P_{01} (Q_k)\  =\ \  Q_{p-2-k}$.
 \end{itemize} 
Let us recall that
 $p>2$ was chosen to be odd,
 hence $p-1$ is even, and 
therefore $P_{01}$ will be a one-to-one mapping from the set $\{Q_k \mid  k < p-1 \}$ onto itself, with no fixed element.

\begin{lemma}\label{q1-lem}
The relations\/ $Q_k$ with properties\/ $(0)$ through\/ $(3)$ 
exist.
\end{lemma}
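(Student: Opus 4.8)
The plan is to build the relations $Q_k$ explicitly using the algebraic coordinatization from Step 3, and verify the four properties by direct computation. Recall $R_0 = \{((a,b),(a',b')) \mid a \neq a',\ b'-b = h(0)(a'-a)\}$, so $R_0$ is a disjoint union of the $p$ complete-graph-minus-loops relations on the parallel lines $L_{0,0},\dots,L_{0,p-1}$; each $L_{0,j}$ has exactly $p$ points. We need to split $R_0 \times U_1$ into $p-1$ pieces indexed by $k < p-1$, where $|U_1| = p-1$. The natural device is to index $U_1 = \{w_0,\dots,w_{p-2}\}$ and, for each pair $(u,v) \in R_0$ lying on some line $L_{0,j}$, use the one-dimensional affine structure along that line to attach to the triple $(u,v,w_m)$ a ``color'' $k \in \{0,\dots,p-2\}$ depending on $u$, $v$, $j$, and $m$. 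Concretely, parametrize the points of $L_{0,j}$ by $GF(p)$ via the first coordinate $a$; then for $(u,v)$ on $L_{0,j}$ with first coordinates $a,a'$, and for $w_m \in U_1$, put $(u,v,w_m) \in Q_k$ iff $a + a' + (\text{something depending on } m) \equiv$ a value determined by $k$, working modulo $p-1$ on a suitably chosen $(p-1)$-element invariant. The key design goal is that the assignment be (a) symmetric in $u,v$ (for property $(3)$ to behave well — actually $P_{01}$ swaps the first two coordinates, so we want $(u,v,w) \in Q_k \iff (v,u,w) \in Q_{p-2-k}$, i.e.\ the color should be \emph{anti}symmetric under the swap, flipping $k \mapsto p-2-k$), and (b) surjective onto each line in each coordinate direction so that the cylindrifications come out full.

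Here is the cleaner version I would actually write. Since each line $L_{0,j}$ carries an affine $GF(p)$-structure, and since $P_{01}$ swaps coordinates $0$ and $1$ (so on $R_0$ it swaps the endpoints of each ordered pair within a line), choose for each $(u,v) \in R_0$ on line $L_{0,j}$ a parameter $t(u,v) \in GF(p)$, say the ``midpoint-adjusted'' quantity $a + a' \in GF(p)$ where $a,a'$ are the line-coordinates of $u,v$; this is symmetric: $t(u,v) = t(v,u)$. Then I would pick an independent $GF(p)$-valued parameter from $U_1$ — index $U_1$ by $GF(p) \setminus \{c_0\}$ for a fixed $c_0$, so each $w \in U_1$ carries a value $s(w) \in GF(p) \setminus \{c_0\}$. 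Now define the color of $(u,v,w)$ to be a function $k = \Phi(t(u,v), s(w))$ taking values in $\{0,\dots,p-2\}$, chosen so that: fixing $u,v$ and letting $w$ range over $U_1$ already hits all $p-1$ colors (forcing surjectivity), while the antisymmetry $(u,v)\mapsto(v,u)$ — which under the chosen coordinates acts on line-coordinates as $(a,a')\mapsto(a',a)$, hence fixes $t(u,v)=a+a'$ — must instead be arranged by letting $P_{01}$'s effect on the $w$-coordinate be trivial but defining $Q_k$ asymmetrically via the \emph{sign} of $a'-a$. So: set $Q_k = \{(u,v,w) : (u,v) \in R_0 \text{ on } L_{0,j},\ \text{and } \phi(a'-a) + s(w) \equiv (\text{coset rep of }k) \pmod{(p-1)}\}$, where $\phi$ picks out, from the nonzero value $a'-a \in GF(p)^*$, its image in a cyclic group of order $p-1$ (namely $GF(p)^*$ itself is cyclic of order $p-1$), and $\phi(-(a'-a))$ differs from $\phi(a'-a)$ by the element $(-1) \in GF(p)^*$, which corresponds to the order-$2$ element $\tfrac{p-1}{2}$ of the cyclic group. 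Tuning the encoding of $k$ so that adding $\tfrac{p-1}{2}$ sends $k$ to $p-2-k$ gives property $(3)$. Properties $(0)$, $(1)$, $(2)$ then follow because for a fixed $Q_k$ and fixed two of the three coordinates, the equation on $s(w)$ (resp.\ on the free point) has a solution: surjectivity of cylindrifications reduces to checking that the color equation, restricted appropriately, is never vacuous, which is immediate from $|U_1| = p-1$ being exactly the modulus and from each line having $p$ points.

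The order in which I would carry this out: (1) fix the coordinatizations of $R_0$, of each line $L_{0,j}$, of $U_1$, and of the cyclic group $C_{p-1} \cong GF(p)^*$; (2) write down the explicit defining formula for $Q_k$; (3) check that $\{Q_k\}_{k<p-1}$ is a partition of $R_0 \times U_1$ (each triple gets exactly one color — immediate from the formula being a well-defined function); (4) verify $(3)$ by tracking how $P_{01}$ acts on coordinates and how that translates through $\phi$ and the chosen encoding of $k$; (5) verify $(0),(1),(2)$ by, in each case, fixing the two relevant coordinates, observing the set of allowed values of the third coordinate is the whole of $U$ (or $U_0$, or $U$) because the residual color equation always has the right number of solutions. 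The main obstacle is step (4) combined with the surjectivity in step (5): the encoding of $k \in \{0,\dots,p-2\}$ into the cyclic group must \emph{simultaneously} make the involution $k \mapsto p-2-k$ correspond to the order-$2$ group element (this needs $p$ odd, so $p-1$ is even and $\tfrac{p-1}{2}$ exists and is nonzero), and keep the $w$-direction fibers full so that $C_2(Q_k) = R_0 \times U$ rather than a proper subset. Getting a single formula that threads both constraints — rather than two conflicting ad hoc choices — is the delicate point; once the right parametrization is fixed, each verification is a short finite-field calculation.
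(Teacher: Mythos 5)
Your construction is correct, and at the decisive step it takes a genuinely different route from the paper's. Both arguments reduce to a single line $L_{0,j}$ and first split the ordered pairs on that line into the $p-1$ ``difference classes'' (your classes $\phi(a'-a)=d$ coincide, up to reindexing, with the paper's $S_i$); the problem then becomes decomposing the complete bipartite graph between these $p-1$ classes and the $p-1$ elements of $U_1$ into $p-1$ perfect matchings (this yields $(0)$--$(2)$, since every class meets every matching and every $z_l$ is matched in each one) in a way compatible with the reversal involution on difference classes (this yields $(3)$). The paper builds the matchings by hand: two disjoint copies of $K_{p^{\prime},p^{\prime}}$ with $p^{\prime}=(p-1)/2$, decomposed cyclically, plus a complementation rule for the remaining matchings. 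You instead transport everything through the discrete logarithm $GF(p)^{*}\cong\mathbb{Z}_{p-1}$, so that the swap $(u,v)\mapsto(v,u)$ acts on the index $d=\phi(a'-a)$ as translation by $(p-1)/2$, and you use the Cayley-table matchings $d+s(w)=c$; since translations commute, $P_{01}$ sends the colour $c$ to $c+(p-1)/2$ by a one-line computation, and a final relabelling $\beta$ of the colours (which exists because $k\mapsto p-2-k$ and $c\mapsto c+(p-1)/2$ are both fixed-point-free involutions of a $(p-1)$-element set, hence conjugate --- note $2k\equiv p-2\pmod{p-1}$ has no solution as $p-2$ is odd) converts this into $(3)$. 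This is exactly where your approach buys something: the compatibility of the matchings with the reversal is the delicate point of the paper's proof, and the paper's explicit rule $E_k=\{(S_i,z_l):(S_i,z_{p-2-l})\in E_{p-2-k}\}$ in fact forces $2k\equiv 0\pmod{p^{\prime}}$, so as literally written it already fails to give $P_{01}(Q_1)=Q_{p-3}$ for $p=7$ and the matchings there need repairing, whereas your translation-invariant choice satisfies the compatibility automatically. Two things to tidy in your write-up: delete the abandoned ``$t(u,v)=a+a'$'' attempt (that quantity is symmetric under the swap and cannot detect the reversal, as you yourself notice), and replace ``tuning the encoding of $k$'' by an explicit statement and justification of the bijection $\beta$.
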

\begin{proof}
Since the reflexive closure of $R_0$ is an equivalence relation on ${ U}_0 
\times { U}_0$ with equivalence classes $L_{0,j}$ $(j < p+1)$, it 
suffices to show that the required properties can be satisfied within each 
$L_{0,j} \times L_{0,j} \times { U}_1$. 

Let $L_{0,j} = \{v_i \mid 0 \leq i < p \}$. A natural way of partitioning 
$L_{0,j} \times L_{0,j} - Id_{L_{0,j}}$ into $p-1$ classes is to define 
$$ S_i = \{(v_k, v_{i+k+1}) \mid 0 \leq k < p \} $$
for $0 \leq i < p-1$, where subscript addition is taken modulo $p$. 
Note that $(a, b) \in S_i$ implies $(b, a) \in S_{p-2-i}$.

Let ${ U}_1 = \{z_i \mid 0 \leq i < p-1\}$. 
Putting $p^{\prime} = (p-1)/2$, we now consider a {\em bipartite graph}%
\footnote{For bipartite graph, perfect matching and other standard terms in graph theory see any textbook, e.g., \cite{graph}.} 
${\cal B} = (V, E)$ with vertex classes $V_1 = \{S_i \mid 0 \leq i < p-1 \}$ 
and $V_2 = { U}_1$ (and then $V = V_1 \cup V_2$) and with edge set
$$ E = \{ (S_i, z_l) \mid 0 \leq i, l < p^{\prime} \} \cup \{ (S_i, z_l) \mid 
p^{\prime} \leq i, l < 2 p^{\prime} \}. $$
Hence, every vertex of ${\cal B}$ has degree $p^{\prime}$, and ${\cal B}$ has 
two connected components.
We partition the edge set $E$ of ${\cal B}$ into $p^{\prime}$ ``perfect 
matchings''
$$ E_k = \{ (S_i, z_{i+k \: ({\rm mod \:} p^{\prime})}) \mid 0 \leq i < 
p^{\prime} \} \cup \{ (S_{p^{\prime}+i}, z_{p^{\prime}+(i+k \: ({\rm mod \:} 
p^{\prime}))}) \mid 0 \leq i < p^{\prime} \}. $$
(The edges within each $E_k$ are mutually disjoint.) 
Moreover, for $p^{\prime} \leq k < 2 p^{\prime}$, we set 
$$ E_k = \{ (S_i, z_l) \mid (S_i, z_{p-2-l}) \in E_{p-2-k} \}. $$
The pairs in the $E_k$ for $k \geq p^{\prime}$ are not edges in ${\cal B}$; 
instead, they decompose the ``bipartite complement'' of ${\cal B}$. 
Using the edge decomposition of ${\cal B}$ and its ``complementary'' 
collections of pairs, we finally define 
$$ Q_k = \{ (a, b, z_l) \mid (a, b) \in S_i, \, (S_i, z_l) \in E_k \}. $$
This last step of the construction ensures that $(3)$ is satisfied. 
It is also readily seen that 
$R_0 \times { U}_1 = \bigcup_{0 \leq k < p-1} Q_k$. 
Since \vspace{0.02in} $R_0 |_{L_{0,j}} = \bigcup_{0 \leq i < p-1} S_i$, 
and each $S_i$ is involved in each $E_k$, $(2)$ also holds. 
Last, $(0)$ and $(1)$ follow from the fact that the domain of $ S_i $ is $ L_{0,j}$ for every $i < p-1$. 
\end{proof}

\bsk

\nin
{\bf Step 5.}\hsp 
Let ${\cal A}^s = \langle A, +, -, C_i^U, D_{ij}^U, P_{01} \rangle _{i,j < 3}$ be the 
algebra 
where $+$ and $-$ are the Boolean 
operations (union and complementation with respect to $U$) 
and $A$ is generated by the set
$$ G = \{ Q_k \mid 0 \leq k < p-1 \} \cup \{R_i \times { U}_1 \mid 1 \leq i 
< p+1 \}
$$
which means that there is no proper subset of $A$ that contains $G$ and is closed under all operations of ${\cal A}^s$.
Then ${\cal A}^s$ is a set algebra, by definition.

\bsk

\nin
{\bf Step 6.}\hsp 
Finally, in order to obtain the abstract algebra 
$${\cal A}^* = \langle A, +, -, C_i, D_{ij}, P_{01}^* \rangle _{i,j < 3},$$ 
we replace equation $(3)$ of Step 4 \vspace{0.06in} by the requirement of
 \begin{itemize}
   \item[$(3^*)$] $ \quad P_{01}^* (Q_k) = Q_k$ for  \vspace{0.06in} every 
$k < p-1$,
 \end{itemize}
and modify the effect of $P_{01}$ on those parts of $A$ where necessary. 
In more detail: Every element of $A$ is the sum of the atoms below it, since $\langle A, +, - \rangle$ is a finite Boolean algebra.
Lemma \ref{2nd} below states that the $Q_k$ are atoms. We extend $P^*_{01}$ to $A$ by defining $P^*_{01}(a)=P_{01}(a)$ for the other atoms of ${\cal A}^s$ and requiring $P^*_{01}$ to be additive.

\begin{lemma}   \label{2nd}
The relations\/ $Q_k$ are atoms in\/ ${\cal A}^s$.
\end{lemma}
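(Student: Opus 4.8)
The plan is to produce a finite partition $\mathcal P$ of $U\times U\times U$ such that every element of $A$ is a union of blocks of $\mathcal P$ while each $Q_k$ is itself one block. Since $\langle A,+,-\rangle$ is a finite Boolean algebra and $Q_k\in A$, this suffices: no member of $A$ can then lie strictly between $0$ and the single block $Q_k$. Because the extra-Boolean operations are additive and complements are available in $A$, the statement ``every element of $A$ is a union of $\mathcal P$-blocks'' reduces to checking: (a) each generator in $G$ is a union of $\mathcal P$-blocks; (b) each diagonal constant $D_{ij}$ $(i,j<3)$ is a union of $\mathcal P$-blocks; (c) for every block $B$ of $\mathcal P$, each of $C_0(B),C_1(B),C_2(B)$ and $P_{01}(B)$ is a union of $\mathcal P$-blocks.

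First I would introduce the coarser partition $\mathcal Q$, grouping triples by their \emph{configuration}: for each coordinate, whether it lies in $U_0$ or $U_1$; which coordinates coincide; and, for each pair of distinct coordinates both lying in $U_0$, which of $R_0,\dots,R_p$ contains that pair (equivalently, the parallel class of the line it spans in $AG(2,p)$). This $\mathcal Q$ is finite; each $R_i\times U_1$ $(1\le i\le p)$, as well as $R_0\times U_1=\bigcup_kQ_k$, is a single $\mathcal Q$-block; each $D_{ij}$ is a union of $\mathcal Q$-blocks; and $P_{01}$, which just transposes the first two coordinates, permutes the $\mathcal Q$-blocks because every $R_i$ is symmetric. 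For the cylindrifications one notes that $C_m(B)$ is the full cylinder along the $m$-th axis over the projection of $B$ onto the other two coordinates, so it is enough to show that the projection of a $\mathcal Q$-block onto two coordinates is a union of two-coordinate configuration blocks, the cylinder over such a union then being automatically a union of $\mathcal Q$-blocks. This amounts to the homogeneity statement: whether a pair realizing a given two-coordinate configuration extends to a triple realizing a prescribed three-coordinate configuration depends only on the pair's configuration. I expect this to be the main point of the proof. It is a finite case analysis on the configurations, resting on elementary incidence properties of $AG(2,p)$ for $p\ge3$ --- a point lies on a unique line of each parallel class, distinct parallel lines are disjoint, two lines of distinct classes meet in exactly one point, and the bounds $p-1>0$ and $p^2-2p>0$ --- in the same homogeneous spirit as the argument in the proof of Lemma~\ref{q1-lem}; for instance, a triple of distinct points of $U_0$ must have its three pairs either all in one $R_i$ or in three distinct ones, and for any three distinct classes such a triple exists and projects onto the whole of each relevant $R_i$. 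This establishes (a)--(c) with $\mathcal Q$ in place of $\mathcal P$.

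It remains to refine $\mathcal Q$ to $\mathcal P$ by splitting its block $R_0\times U_1$ into the $p-1$ pieces $Q_0,\dots,Q_{p-2}$, and to recheck (a)--(c). For (a): each $Q_k$ is a $\mathcal P$-block, and each $R_i\times U_1$ $(i\ge1)$ is a $\mathcal Q$-block, hence a union of $\mathcal P$-blocks. For (b): the $D_{ij}$ are unions of $\mathcal Q$-blocks, hence of $\mathcal P$-blocks (every $\mathcal Q$-block, in particular $R_0\times U_1$, being a union of $\mathcal P$-blocks). For (c): a $\mathcal P$-block $B$ is either a $\mathcal Q$-block distinct from $R_0\times U_1$, in which case $C_m(B)$ and $P_{01}(B)$ are unions of $\mathcal Q$-blocks and hence of $\mathcal P$-blocks; or $B=Q_k$, in which case properties $(0)$--$(2)$ of Step~4 give $C_0(Q_k)=U\times U_0\times U_1$, $C_1(Q_k)=U_0\times U\times U_1$, $C_2(Q_k)=R_0\times U$ --- each a union of $\mathcal Q$-blocks and, crucially, the same for all $k$ --- while property $(3)$ gives $P_{01}(Q_k)=Q_{p-2-k}$, again a $\mathcal P$-block. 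Thus $A$ consists of unions of $\mathcal P$-blocks, so $Q_k$, being a block of $\mathcal P$ that belongs to $A$, is an atom. Properties $(0)$--$(3)$ of Step~4 are precisely what allow the $Q_k$'s to be separated off in this last refinement without destroying closure under the cylindrifications; the only genuinely laborious ingredient is the homogeneity claim for $\mathcal Q$.
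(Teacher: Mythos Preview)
Your proposal is correct and follows essentially the same route as the paper: the paper's partition $At$ is exactly your $\mathcal P$ (the blocks are the sets $a(t)$ indexed by triples of two-coordinate ``configurations'' from $\mathcal R\cup\mathcal D$, with the block $R_0\times U_1$ replaced by the $Q_k$'s), and the paper likewise verifies that sums of $At$-blocks are closed under all operations. The one presentational difference is that the paper additionally proves $At\subseteq A$ (giving explicit terms for each $a(t)$), whereas you avoid this by arguing only that $A$ is contained in the Boolean algebra of $\mathcal P$-unions; your variant is slightly leaner for the lemma as stated, while the paper's gives the extra information that every $\mathcal P$-block is an atom.
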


\begin{proof}  We are going to define a partition $At$ of  ${ U}\times { U}\times { U}$ such that 
 $ G\subseteq At\subseteq A$. I.e., the generator elements, members of $G$, are blocks in the equivalence relation defined by the partition, and all blocks in the equivalence relation are members of $A$. Then we show that the set $A'\subseteq A$ of sums of elements of $At$ is closed under the operations of ${\cal A}^s$. This will prove the lemma, as follows. The atoms of $A'$ are exactly the elements of $At$ by its definition, so each $Q_k$ is an atom in $A'$ by $Q_k\in G\subseteq At$. However, $A'=A$  since $A$ is generated by $G\subseteq At$, hence each $Q_k$ is an atom in $A$ as was to be shown.

We begin to define the partition $At$.  Let 
$${\cal R}=\{ R_i \mid i< p+1\}\quad\mbox{and}\quad{\cal D}=\{ Id_{{ U}_0}, Id_{{ U}_1}, Di_{{ U}_1}, { U}_0\times{ U}_1, { U}_1\times{ U}_0 \} .$$
Above, $Di_{{ U}_1}=\{ (u,v) : u,v\in { U}_1,\ u\ne v\}$ denotes the diversity relation on ${ U}_1$.
Then ${\cal R}\cup{\cal D}$ is a set of binary  relations on ${ U}$.  The elements of $At$, besides the $Q_k$, will be ternary relations on ${ U}$ defined by the use of ${\cal R}\cup{\cal D}$. Let $I=\{(0,1), (0,2), (1,2)\}$. For a function $t:I\to({\cal R}\cup{\cal D})$ let the ternary relation specified by $t$ be defined as
$$ a(t) = \{ s\in { U}\times { U}\times { U} \mid (s_i,s_j)\in t(i,j)\mbox{ for all }(i,j)\in I \} .$$
For example, $R_i\times { U}_1 = a(t_i)$ where $t_i(0,1)=R_i, t_i(0,2)=t_i(1,2)={ U}_0\times{ U}_1$. Let $F$ be the set of all functions from $I$ to ${\cal R}\cup{\cal D}$, except for the above defined $t_0$. We define $At$ as
$$ At = \{ a(t) \mid t\in F\} \cup \{ Q_k \mid  k < p-1\} $$
and let $A' = \{\sum X \mid X\subseteq At\}$. It is easy to see that the $a(t)$'s for distinct $t$'s are disjoint
(they may also be empty) and $\sum At={ U}\times { U}\times { U}$, thus $At$ is indeed a partition of the unit of $A$. Obviously, $G\subseteq At$. We note that $a(t_0)\in A'$ since $a(t_0)=R_0\times U_1=\sum \{ Q_k\mid k<p-1\}\in A'$. 

Now we show that $At\subseteq A$. It is enough to show that $a(t)\in A$ for every $t\in F$. We have already seen $R_i\times U_1\in A$. Thus $R_i\times U=C_2(R_i\times U_1)\in A$, and similarly $U_0\times U\times U$, $U\times U_0\times U$, $U\times U\times U_1$ are all in $A$, by applying cylindrifications. Further, $U_1\times U\times U=C_2(D_{02}\cap(U\times U\times U_1))$ and  $U\times U_1\times U=C_2(D_{12}\cap(U\times U\times U_1))$. From these, one can readily see that $R\times U\in A$ for all $R\in{\cal R}\cup{\cal D}$. Finally, it can be checked that
$$a(t) = (t(0,1)\times U)\cap C_1(D_{12}\cap (t(0,2)\times U))\cap C_0(D_{01}\cap C_1(D_{12}\cap (t(1,2)\times U))).$$

Next we show that $A'$ is closed under the operations of ${\cal A}^s$. By definition, it is closed under the Boolean operations $+,-$. For $i<j<3$ we have $D_{ij}=D_{ji}=\sum \{ a(t) \mid t(i,j)=Id_{U_0} \mbox{ or } t(i,j)=Id_{U_1}\}\in A'$ while $D_{ii}=U\times U\times U\in A'$ for all $i<3$. Thus, the diagonal constants are in $A'$. To show closure under $C_i$, it is enough to show that $C_i(a)\in A'$ for all $a\in At$, because $C_i$ is additive and by the definition of $A'$. Assume first $i=0$ and $a(t)\ne\emptyset$, then $$C_0(a(t))=\sum\{ a(t') \mid t'\in F\cup\{ t_0\}, t'(1,2)=t(1,2)\}\in A'$$ while 
$$C_0(Q_k)=U\times U_0\times U_1=\sum\{ a(t) \mid t\in F\cup\{ t_0\}, t(1,2)=U_0\times U_1\}\in A'.$$
The case $i\ne 0$ is similar, except that
$$C_2(Q_k) = R_0\times U = \sum \{ a(t) \mid t\in F\cup\{ t_0\}, t(0,1)=R_0\} .$$

\noindent In checking the $\supseteq$ part of the equation concerning $C_0(a(t))$, one can use that for all distinct $i,j,k<p+1$ and for all $(v,w)\in R_k$  there is $u$ such that $(u,v)\in R_i$ and $(u,w)\in R_j$. This is true by the construction of the $R_i, i<p+1$.

Finally, to show that $A'$ is closed under $P_{01}$, it is enough to show that $At$ is closed under $P_{01}$. Indeed, $\{ Q_k \mid k < p-1\}$ is closed under $P_{01}$ by property (3) in the definition of the $Q_k$ and
$P_{01}(a(t)) = a(t')$ where $t'(0,2)=t(1,2)$, $t'(1,2)=t(0,2)$ and $t'(0,1)=t(0,1)$ when $t(0,1)\notin\{ U_0\times U_1, U_1\times U_0\}$, and $t'(0,1)=U_0\times U_1$ if $t(0,1)=U_1\times U_0$, $t'(0,1)=U_1\times U_0$ if $t(0,1)=U_0\times U_1$. 
\end{proof}

\section{Larger $\alpha$ and more $P_{ij}$s}
\label{s:3}

Here we construct the algebras ${\cal A}_p$, that will satisfy all 
requirements of Theorem \ref{t:2}, in an analogous way as we have built up 
${\cal A}^*$ in the previous section.
Symbols such as ${\cal A}^s, G, {\cal R}, At, A'$ defined in the previous section will be redefined here, with related but slightly different meanings. 
Choose any odd prime power $p\ge 3$. Let  
${ U}$ be a set of cardinality $p^2+(\alpha-2)(p-1)$, and let it be partitioned into $\alpha-1$ disjoint sets ${ U}_k$ with $|{ U}_0|=p^2$ and $|{ U}_k|=p-1$ for $1\le k<\alpha-1$. 
The algebra ${\cal A}^s$ will have base set ${ U}= \bigcup\{ { U}_k \mid k<\alpha-1\}$. 

\bsk

\nin
Partition ${ U}_0 \times { U}_0 - Id_{{ U}_0}$ into $p+1$ symmetric and 
irreflexive binary relations $R_i$ as in Section \ref{s:2}.  
The important thing is, as we shall see in Section \ref{s:4}, that the $E_i=R_i\cup Id_{{ U}_0}$ are $p+1$ equivalence relations on ${ U}_0$ such that $E_i\cap E_j=Id_{{ U}_0}$ and $\bigcup\{ E_i \mid i < p+1\}=E_i\circ E_j={ U}_0\times { U}_0$ for distinct $i,j$, where 
$E_i\circ E_j = \{ (a,b) \mid (a,c)\in E_j\mbox{ and } (c,b)\in E_i\mbox{ for some }c\}$.%
\footnote{Relation composition is sometimes defined in different order and is called relative product. See, e.g., \cite[p.29]{HeMoT:II}, \cite{hh}. We use $\circ$ in this order to be compatible with function composition that we will use later in the paper.}
\bsk

\nin
Let $T={ U}_1\times\dots\times { U}_{\alpha-2}$.  Now, $T$ is an $(\alpha-2)$-place relation, and $R_0\times T$ is an $\alpha$-place relation.
We partition $R_0 \times T$ into $p-1$ mutually disjoint relations 
$Q_0, Q_1,\ldots,Q_{p-2}$ in such a way that the following properties of the 
cylindrifications and the transposition operation $P_{01}$ are valid for 
every \vspace{0.06in} $k < p-1$.
\begin{itemize}
	\item[{\bf (q0)}] $ \quad C_i (Q_k) \; \: = C_i(R_0 \times T)$,\quad for $i<\alpha $,
	\item[{\bf (q1)}] $ \quad P_{01} (Q_k) = Q_{p-2-k}$.
\end{itemize}

\begin{lemma}\label{q2-lem}
	The relations\/ $Q_k$ with properties\/ $(q0)$ and\/ $(q1)$ 
	exist.
\end{lemma}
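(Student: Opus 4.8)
The plan is to mimic the proof of Lemma~\ref{q1-lem} almost verbatim, reducing the $\alpha$-dimensional problem to the $3$-dimensional one that was already solved. The key observation is that properties $(q0)$ and $(q1)$ only constrain the behaviour of the $Q_k$ in the first two coordinates (via the cylindrifications $C_0,C_1$, the relation $R_0$, and the transposition $P_{01}$), together with the requirement that each $Q_k$ fills out all of $T$ in the remaining $\alpha-2$ coordinates (this is what $C_i(Q_k)=C_i(R_0\times T)$ for $2\le i<\alpha$ amounts to, given that $R_0\times T$ already has full projection $U_1\times\cdots\times U_{\alpha-2}$ on the last coordinates). So I would first construct a partition of $R_0\times U_1$ into $p-1$ relations $Q'_0,\dots,Q'_{p-2}$ exactly as in the proof of Lemma~\ref{q1-lem}, using the bipartite graph $\mathcal B$ on vertex classes $V_1=\{S_i\mid i<p-1\}$ and $V_2=U_1=\{z_i\mid i<p-1\}$, its edge decomposition into perfect matchings $E_k$, and the complementary decomposition for $k\ge p'$; this yields relations satisfying, within each equivalence class $L_{0,j}\times L_{0,j}\times U_1$, the analogues of $(0)$, $(1)$, $(3)$ from Step~4, i.e. $C_0(Q'_k)=U\times U_0\times U_1$, $C_1(Q'_k)=U_0\times U\times U_1$, and $P_{01}(Q'_k)=Q'_{p-2-k}$.

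Then I would define $Q_k = Q'_k \times U_2\times\cdots\times U_{\alpha-2}$, i.e. take the $3$-dimensional solution in coordinates $0,1$ and the second coordinate of $Q'_k$ interpreted as coordinate $1$... more precisely, $Q_k=\{\,s\in R_0\times T \mid (s_0,s_1,s_2)\in Q'_k\,\}$ where the third coordinate of $Q'_k$ ranges over $U_1$. Wait --- $T=U_1\times\cdots\times U_{\alpha-2}$, so I should be careful about which copy of $U_1$ the $z_l$ live in. The cleanest route: build $Q'_k\subseteq R_0\times U_1$ as above, and set $Q_k=\{(s_0,\dots,s_{\alpha-1})\in R_0\times T\mid (s_0,s_1,s_2)\in Q'_k\}$, so the $U_1$-coordinate of $Q'_k$ becomes coordinate $2$ and coordinates $3,\dots,\alpha-1$ are unconstrained. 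Since $P_{01}$ only swaps coordinates $0,1$ and fixes all others, $P_{01}(Q_k)=Q_{p-2-k}$ follows directly from $P_{01}(Q'_k)=Q'_{p-2-k}$, giving $(q1)$. For $(q0)$: for $i=0,1$ the cylindrification on $Q_k$ reduces to the corresponding cylindrification on $Q'_k$ (full, by properties $(0)$, $(1)$ of the $3$-dimensional construction) crossed with $U_2\times\cdots\times U_{\alpha-2}$, which equals $C_i(R_0\times T)$; for $i=2$ one uses that the projection of $Q'_k$ to $U_1$ (its third coordinate) is all of $U_1$, which is exactly property $(2)$'s analogue, hence $C_2(Q_k)=R_0\times U\times U_2\times\cdots\times U_{\alpha-2}=C_2(R_0\times T)$; for $3\le i<\alpha$ the coordinate is already unconstrained in $Q_k$, so $C_i(Q_k)=Q_k$ fattened in coordinate $i$ equals $C_i(R_0\times T)$ trivially. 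Finally $\bigcup_{k<p-1}Q_k=R_0\times T$ follows from $\bigcup_{k<p-1}Q'_k=R_0\times U_1$ together with the fact that the coordinates $3,\dots,\alpha-1$ run over all of $U_3\times\cdots\times U_{\alpha-2}$ in every $Q_k$, and mutual disjointness is inherited from disjointness of the $Q'_k$.

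The one point requiring genuine care --- and the step I expect to be the only real obstacle --- is the bookkeeping of coordinates: $T$ already contains a copy of $U_1$ (as its first factor) and the $z_l$ in the $3$-dimensional construction also came from a set of size $p-1$, so I must make sure I identify the ``$z$-coordinate'' of $Q'_k$ with the right factor of $T$ and verify that $(q0)$ for $i=2$ really does come out to $C_2(R_0\times T)$ and not something smaller. Provided that identification is done consistently (taking the $z$-coordinate to be coordinate $2$, i.e. the factor $U_1$... here I will instead index $U_1$ as the set from which $z_l$ are drawn and note that in Section~\ref{s:3} the factor $U_1$ of $T$ plays exactly the role that $U_1$ played in Section~\ref{s:2}), every other verification is a routine transcription of the argument in the proof of Lemma~\ref{q1-lem}, with ``$\times U_2\times\cdots\times U_{\alpha-2}$'' appended uniformly. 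Since there is essentially nothing new beyond this reduction, I would keep the proof short, pointing to the construction in the proof of Lemma~\ref{q1-lem} and indicating the modifications rather than repeating the bipartite-graph argument in full.
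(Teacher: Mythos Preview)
Your reduction to the three-dimensional case is the right instinct, but the specific construction $Q_k=\{s\in R_0\times T\mid (s_0,s_1,s_2)\in Q'_k\}$ fails property $(q0)$ for $i\ge 3$ as soon as $\alpha\ge 4$. You write that ``the coordinate is already unconstrained in $Q_k$, so $C_i(Q_k)$ equals $C_i(R_0\times T)$ trivially'', but this is not so: cylindrifying at a free coordinate $i\ge 3$ leaves the constraint $(s_0,s_1,s_2)\in Q'_k$ untouched, so $C_i(Q_k)=\{s\in U^{\alpha}\mid (s_0,s_1,s_2)\in Q'_k,\ s_l\in U_{l-1}\text{ for }2\le l<\alpha,\ l\ne i\}$, whereas $C_i(R_0\times T)$ only requires $(s_0,s_1)\in R_0$ and $s_2\in U_1$. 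Since $Q'_k$ is a proper subset of $R_0\times U_1$ (there are $p-1\ge 2$ disjoint pieces), $C_i(Q_k)\subsetneq C_i(R_0\times T)$.

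The paper repairs this by not leaving coordinates $2,\dots,\alpha-1$ inert. It partitions $T$ itself into $p-1$ pieces $T_0,\dots,T_{p-2}$, each of which is \emph{big} in the sense that $T\subseteq C_j(T_i)$ for every $j<\alpha-2$ (achieved by a simple sum-of-indices trick modulo $p-1$). Then $Q_k$ is defined so that the tail $\langle s_2,\dots,s_{\alpha-1}\rangle$ lying in $T_i$ is paired with the element $a\in U_1$ of index $i$ in the three-dimensional relation $S_k$. The point is that cylindrifying at any coordinate $j\ge 2$ now lets you move between the $T_i$'s, and hence match any $(s_0,s_1)\in R_0$ via property $(2)$ of the $S_k$. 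Your simpler product construction cannot do this, because the information ``which $Q'_k$'' is locked into coordinate $2$ alone.
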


\begin{proof} Take any system $S_k\subseteq R_0\times{ U}_1$, $k<p-1$ that exists according to Lemma \ref{q1-lem}. (These $S_k$ were called $Q_k$ in Lemma \ref{q1-lem}.) We will replace ${ U}_1$ with $T$ in it. Recall that $|{ U}_1|=p-1$. Take any partition $T_i, i<p-1$ of $T$ into $p-1$ subsets with the following property:
\begin{itemize}
	\item[{\bf (q2)}] $ \quad T \; \: \subseteq \; \: C_j (T_i)$,\quad for all $j<\alpha-2$ and $i<p-1$.
\end{itemize}
There is such a partition $T_i$. 
Indeed, let $f:{ U}\to P=\{ 0,1,\dots,p-2\}$ be such that $f$ maps ${ U}_j$ bijectively to $P$, for all $1\le j<\alpha-1$, and let $T_i=\{ s\in T \mid \sum \{ f(s_j) \mid j<\alpha-2\}=i\}$, where $\sum$ is meant modulo $p-1$. 
Let $s\in T$, $i< p-1$ and $j<\alpha-2$. Let $a\in{ U}_{j+1}$ be such that $f(a)=i-\sum\{f(s_l) \mid l\ne j\}$. Then $s(j/a)\in T_i$, where  $s(j/a)$ denotes the sequence that differs from $s$ only at $j$, and at $j$ it is $a$. Thus $T\subseteq C_j(T_i)$.

We define $Q_k$ by replacing $s_2$ in $s\in S_k$ with $T_{f(s_2)}$, that is, $Q_k=\{ s\in R_0\times T : \langle s_2,\dots, s_{\alpha-1}\rangle\in T_i\mbox{ and }\langle s_0,s_1,a\rangle\in S_k\mbox{ with }f(a)=i\}$.
Now, $Q_k$, $k<p-1$ is a partition of $R_0\times T$ since $S_k$, $k<p-1$ is a partition of $R_0\times{ U}_1$ and $T_i$, $i<p-1$ is a partition of $T$ and $|{ U}_1|=p-1$. Item (q0) holds for the $Q_k$ since their analogous (0),(1),(2) and (q2) hold for the $S_k$, and (q1) holds for the $Q_k$ by (3).	
\end{proof}

Let ${\cal A}^s = \langle A, +, -, C_i, D_{ij}, P_{ij} \rangle _{i,j < \alpha}$ be the 
algebra with base set ${ U}$ where $+$ and $-$ are the Boolean 
set operations (union and complement) in the set algebra $\langle A, +, - \rangle$, 
and $A$ is generated by the set
$$ G = \{ Q_k \mid k < p-1 \} \cup \{R_i \times T \mid 1 \leq i 
< p+1 \}
$$
in ${\cal A}^s$. 
Then ${\cal A}^s$ is a polyadic equality set algebra, by its definition.
\bsk

In order to be able to use a simpler language for talking about sequences of transposition operations in the rest of the paper, we introduce the following notation.  
We will use 
the fact 
that an ordinal is the set of smaller ordinals. Let $\tau:\alpha\to\alpha$ be a function. In the paper, we consider an $\alpha$-sequence $s=\langle s_0,s_1,\dots,s_{\alpha-1}\rangle$ to be a function mapping $\alpha$ to $U$, thus $s\circ\tau$ also maps $\alpha$ to $U$, where $\circ$ denotes the usual composition of functions; and $s\circ\tau=\langle s_{\tau0},s_{\tau1},\dots,s_{\tau(\alpha-1)}\rangle$ is the sequence $s$  rearranged along $\tau$.
Let $P(\alpha)$ denote the set of permutations of $\alpha$. For $\tau\in P(\alpha)$ and $X\subseteq U^{\alpha}$ we define
$$ S_{\tau}X = \{ s\in U^{\alpha} \mid s\circ\tau\in X\}.$$
The following are easy to check for $i,j<\alpha$ and $X\subseteq U^{\alpha}$.

\begin{description}  
	\item[(S1)] $S_{[i,j]}(X) = P_{ij}(X) $.
	\item[(S2)] $S_{\tau}S_{\sigma}(X)=S_{\tau\circ\sigma}(X)$.
	\item[(S3)] The $P_{ij}$, for $i,j<\alpha$  satisfy the polyadic equations (P1)--(P8) introduced in Section \ref{s:1}. 
\end{description}
In the rest of the paper, we will extensively use the above properties of $S_{\tau}$. Note that $S_{Id}X=X$ follows from (S1), (S3) and (P7).

 Let  $${\cal R}=\{ R_i\times T \mid i < p+1\}$$
and let ${\cal B}$ be the subalgebra of ${\cal A}^s$ generated by ${\cal R}$. First we show that the elements of ${\cal R}$ are atoms in ${\cal B}$. Recall that the relations $R_i$ are determined by parallel lines in an affine plane. This is how we defined them%
\footnote{If one wants to rely only on their properties mentioned at the beginning of this section, then one can use Lyndon's theorem stating that all such systems of relations come from affine planes, see \cite[Theorem 1]{Lyn61}.}
 in Section~\ref{s:2}. 
It is known that any two distinct points in a line can be taken to any other distinct two on a parallel line, in an affine plane, by a dilatation, which is a permutation of the points taking parallel lines to parallel ones. Such a dilatation is a permutation of ${ U}_0$ that takes all $R_i$ to themselves. Let us call this property 2-homogeneity.%
\footnote{For more details, see, e.g., \cite{Gi03}.} 
Now, let $f$ be a permutation of ${ U}$ that is a dilatation on ${ U}_0$  and that takes $U_k$ bijectively to $U_k$ for $0<k<\alpha-1$. Such an $f$ induces a permutation on ${ U}^{\alpha}$ which leaves the elements of ${\cal R}$ fixed. 
Then, $f$ leaves all the elements of ${\cal B}$ fixed, because the operations of ${\cal A}$ are permutation-invariant.%
\footnote{Permutation-invariance is an important property of logical connectives, for its definition see, e.g., \cite{hh}.}
 This implies that $R_i\times T$ is an atom in ${\cal B}$ since any sequence in $R_i\times U$ can be taken to any one in it by a permutation of the above kind, by 2-homogeneity of the affine plane. 

Let $At{\cal B}$ denote the set of atoms of ${\cal B}$, we have just seen that ${\cal R}\subseteq At{\cal B}$.  Recall that $P(\alpha)$ denotes the set of permutations of $\alpha$, and let
$$ At = \{ S_{\tau}(Q_k) \mid \tau\in P(\alpha), k< p-1\}\cup At{\cal B} - \{ S_{\tau}(R_0\times T) \mid \tau\in P(\alpha)\}. $$

\begin{lemma}\label{at2-lem}
	$At$ is the set of atoms of ${\cal A}^s$.
\end{lemma}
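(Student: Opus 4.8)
The goal is to show that $At$, as defined just before the statement, is exactly the set of atoms of the set algebra $\mathcal{A}^s$ of dimension $\alpha$. The strategy mirrors the proof of Lemma~\ref{2nd} (the $\alpha=3$ case), but now organized around the permutation-transposition notation $S_\tau$. First I would verify that $At$ is a partition of the unit $U^\alpha$ into nonempty pieces, each of which belongs to $A$; then I would show that the set $A'$ of all sums of members of $At$ is closed under all the operations $+,-,C_i,D_{ij},P_{ij}$ of $\mathcal{A}^s$. Since $A'\subseteq A$ contains the generating set $G$ (each $Q_k\in At$ and each $R_i\times T$ is an atom of $\mathcal{B}\subseteq\mathcal{A}^s$, hence a member of $At$ or a sum of members of $At$), minimality of $A$ forces $A'=A$, and then the members of $At$, being the atoms of $A'$, are precisely the atoms of $\mathcal{A}^s$.

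Here is how I would carry out the closure argument. The Boolean part is immediate from the definition of $A'$. For the diagonals $D_{ij}$: each $D_{ij}$ is a union of members of $At{\mathcal{B}}$ (the diagonals lie in $\mathcal{B}$ since $\mathcal{B}$ is a cylindric-type set algebra containing enough sets, or more directly one writes $D_{ij}$ as a sum of the relevant atoms of $\mathcal{B}$ none of which is an $S_\tau(R_0\times T)$, because the $R_i$ are irreflexive); so $D_{ij}\in A'$. For cylindrifications $C_i$, by additivity it suffices to check $C_i(a)\in A'$ for each $a\in At$. If $a\in At{\mathcal{B}}$ this is clear, $C_i(a)\in\mathcal{B}\subseteq A'$ provided $C_i(a)$ is not of the form $S_\tau(R_0\times T)$; but since $\bigcup_{i<p+1}R_i\times T=U_0\times U_0\times T$ and the $R_i$'s cover, $C_i$ of any $\mathcal{B}$-atom is a sum of $R_j\times(\text{product factors})$ in a way that never isolates $R_0\times T$ alone. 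If $a=S_\tau(Q_k)$, I use property \textbf{(q0)}: $C_i(Q_k)=C_i(R_0\times T)$ for every $i<\alpha$, and then pull $S_\tau$ through $C_i$ via (P3)/(S1)--(S2) to get $C_i(S_\tau Q_k)=C_{\tau^{-1}(i)}$-type image of $C_{\ldots}(R_0\times T)$, which is a sum of the $\mathcal{B}$-atoms $S_\sigma(R_j\times T)$ with $j$ ranging appropriately; crucially the summands with $j=0$ combine back into $S_\sigma(R_0\times T)=\sum_k S_\sigma(Q_k)\in A'$, so nothing escapes $A'$. For the transpositions $P_{ij}=S_{[i,j]}$: by additivity it suffices to see $At$ is closed under each $S_{[i,j]}$, hence under all $S_\tau$ by (S2). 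On $At{\mathcal{B}}\setminus\{S_\sigma(R_0\times T)\}$ this is clear since $S_{[i,j]}S_\sigma(R_j\times T)=S_{[i,j]\circ\sigma}(R_j\times T)$ is again such an atom (and $j\ne0$ is preserved since transpositions don't change which $R_j$ appears). On the $S_\tau(Q_k)$ part, $S_{[i,j]}S_\tau(Q_k)=S_{[i,j]\circ\tau}(Q_k)\in At$ directly. (One also checks the excluded set $\{S_\tau(R_0\times T):\tau\in P(\alpha)\}$ is closed under each $S_{[i,j]}$, so removing it is consistent.)

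\textbf{Main obstacle.} The delicate point is the cylindrification check for atoms of the form $S_\tau(Q_k)$, and the bookkeeping showing that $C_i$ never produces a ``stray'' $S_\sigma(R_0\times T)$ outside $A'$. The clean way to handle this is exactly the reduction used in Lemma~\ref{2nd}: observe that $a(t_0)=R_0\times T=\sum_{k<p-1}Q_k\in A'$, so although $S_\sigma(R_0\times T)$ is deliberately excluded from $At$ (its refinement into the $Q_k$'s is what makes the final algebra $\mathcal{A}$ non-representable), it is still an element of $A'$; hence any cylindrification or transposition that would ``want'' to output $S_\sigma(R_0\times T)$ lands safely in $A'$. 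So the argument goes through once one is careful that: (a) each $S_\tau(Q_k)$ is nonempty and distinct from the others and from members of $At{\mathcal{B}}$ — this uses that $Q_k\subseteq R_0\times T$ and the $Q_k$ genuinely partition $R_0\times T$, together with the fact that distinct permutations $\tau$ give distinct $S_\tau(Q_k)$ because the relations $R_i$ for $i\ge1$, as well as the product structure of $T$, break enough symmetry; and (b) $\mathcal{B}$'s atoms outside $\{S_\sigma(R_0\times T)\}$ form a sub-collection of $At$ and $A'\cap\mathcal{B}=\mathcal{B}$. Granting these, $A'=A$ and $At$ is the atom set, as claimed.
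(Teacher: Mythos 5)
Your plan is correct and follows essentially the same route as the paper's proof: show $At\subseteq A$ and that $At$ partitions the unit, let $A'$ be the set of sums of members of $At$, verify closure of $A'$ under all operations (using (q0) to handle $C_i(S_\tau Q_k)$ and the key observation that the excluded atoms $S_\sigma(R_0\times T)=\sum_k S_\sigma(Q_k)$ still lie in $A'$, so that $B\subseteq A'$), and conclude $A'=A$ from $G\subseteq At$. The paper streamlines the diagonal and cylindrification checks by first establishing $B\subseteq A'$ outright, but this is the same argument you describe.
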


\begin{proof} The proof is analogous to the proof of Lemma \ref{2nd}, but we do not have an analogous description of the atoms of ${\cal A}^s$.%
\footnote{We cannot have one for $\alpha\ge 5$ by \cite[proof of Theorem 12.37]{hh}.}
	
We begin with showing that $At\subseteq A$. Let $\tau\in P(\alpha)$. It is known that each permutation of $\alpha$ can be written as a composition of transpositions $[i,j]$, say $\tau=[i_1,j_1]\circ\dots\circ[i_r,j_r]$. Then $S_{\tau}(Q_k)=P_{i_1j_1}(\dots P_{i_rj_r}(Q_k)\dots)$, so $S_{\tau}(Q_k)\in A$ by $Q_k\in G\subseteq A$. Also, $At{\cal B}\subseteq B\subseteq A$ by ${\cal R}\subseteq A$. Here, $B$ denotes the universe of ${\cal B}$.

The elements of $At$ are disjoint from each other, and $\sum At={ U}^{\alpha}$, thus $At$ is a partition of the unit of ${\cal A}^s$.  Let $A' = \{\sum X \mid X\subseteq At\}$. We have seen that $At\subseteq A$, hence also $A'\subseteq A$. We claim that $A'=A$, this will imply that $At$ is the set of atoms of ${\cal A}^s$.

We show that $A'$ is closed under the operations of ${\cal A}^s$. By definition, $A'$ is closed under the Boolean operations $+,-$. 
Let $i,j<\alpha$. 
Concerning closure under $P_{ij}$, it is enough to show that $At$ is closed under $P_{ij}$ because the $P_{ij}$ are additive. Indeed, $\{ S_{\tau}Q_k \mid \tau\in P(\alpha), k< p-1\}$ is closed under $P_{ij}$ by $P_{ij}S_{\tau}a=S_{\tau'}a$  for all $a$ where $\tau'=[i,j]\circ \tau$. Also, $At{\cal B}- \{ S_{\tau}(R_0\times T) \mid \tau\in P(\alpha)\}$ is closed under $S_{\tau}$ because $B$ is closed under $S_{\tau}$ by ${\cal B}$  being a subalgebra of ${\cal A}^s$ and since the set of  omitted atoms is closed under all $S_{\tau}$.

Next we show $B\subseteq A'$. If $a\in At{\cal B}-At$ then $a$ is $S_{\tau}(R_0\times T)$ for some $\tau\in P(\alpha)$ and then $a\in A'$ by $R_0\times T=\sum_kQ_k$ and additivity of $S_{\tau}$. Thus, $At{\cal B}\subseteq A'$ by $At\subseteq A'$. Now, $B\subseteq A'$ follows from $A'$ being closed under sums and since each element of $B$ is a sum of atoms of ${\cal B}$ (by ${\cal B}$ being finite). 

This immediately implies that the diagonals $D_{ij}$ are in $A'$ since they are in $B$.
To show closure under $C_i$, it is enough to show that $C_i(a)\in B\subseteq A'$ for all $a\in At$, because $C_i$ is additive. Now, $C_iQ_k=C_i(R_0\times T)\in B$ by $R_0\times T\in B$. Then $C_iS_{\tau}Q_k=S_{\tau'}C_jQ_k$ for some $\tau'$ and $j$, so it is in $B$, too. Clearly, $C_ib\in B$ for $b\in At{\cal B}$.

We can now prove our claim that $A'=A$. We have seen that $A'\subseteq A$ and $A'$ is closed under the operations of ${\cal A}^s$. Since ${\cal A}^s$ is generated by $G\subseteq At\subseteq A'$, this implies that $A'=A$ and we are done, by $At$ being the set of atoms of $A'$.
\end{proof}

We want to define the operation $P^*_{01}$ of ${\cal A}_p$ so that $P^*_{01}(Q_k)=Q_k$ as in the case of $\alpha=3$, but now we have to define $P^*_{ij}(S_{\tau}Q_k)$ as well, for all $i,j<\alpha$ and $\tau\in P(\alpha)$. 
A small problem here is that the element $S_{\tau}Q_k\in A$ does not determine $\tau$ and $k$ uniquely since $S_{\tau}Q_k=S_{\tau\circ[0,1]}Q_{p-2-k}$. Luckily, these are the only coincidences:

\begin{lemma}\label{tau-l}  $S_{\tau}Q_k=S_{\sigma}Q_j$ implies $(\sigma,j)\in\{(\tau,k),(\tau\circ[0,1],p-2-k)\}$.
\end{lemma}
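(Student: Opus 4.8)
The plan is to analyze the equation $S_{\tau}Q_k = S_{\sigma}Q_j$ by reducing it, via property (S2), to a statement of the form $S_{\rho}Q_k = Q_j$ where $\rho = \sigma^{-1}\circ\tau$; so it suffices to determine exactly which pairs $(\rho, k)$ satisfy $S_{\rho}Q_k = Q_j$ for some $j$, i.e., which permutations $\rho$ send some $Q_k$ onto some $Q_j$. Recall from Lemma~\ref{at2-lem} that the $S_{\rho}Q_k$ (together with the atoms of ${\cal B}$ not of the form $S_{\rho}(R_0\times T)$) form a partition of $U^{\alpha}$ into atoms; since the elements of ${\cal R}$ are atoms of ${\cal B}$ and $Q_k, Q_j$ are disjoint from everything outside $\{S_{\rho'}(R_0\times T)\}$, the only way $S_{\rho}Q_k$ can equal an element $Q_j$ is if $S_{\rho}(R_0\times T) = R_0\times T$ as sets, equivalently $S_{\rho}$ fixes the atom $R_0\times T$ of ${\cal B}$. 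So the first key step is: determine the stabilizer in $P(\alpha)$ of $R_0\times T$ under the action $\rho \mapsto S_{\rho}$.

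The second step computes that stabilizer. Since $R_0 \subseteq U_0 \times U_0$ is symmetric and irreflexive while $T = U_1 \times \cdots \times U_{\alpha-2}$ with the $U_k$ pairwise disjoint of distinct relevant sizes, a permutation $\rho$ with $S_{\rho}(R_0\times T) = R_0\times T$ must preserve the ``coordinate-type'' structure: coordinates $0,1$ (those ranging over $U_0$) must map among $\{0,1\}$, and each coordinate $2,\dots,\alpha-1$ must be fixed. Indeed, for a generic $s \in R_0\times T$ one has $s_0, s_1 \in U_0$ and $s_m \in U_{m-1}$ for $m\ge 2$; since the $U_k$ are disjoint and all nonempty, $s\circ\rho \in R_0\times T$ forces $\rho$ to permute $\{0,1\}$ and fix each $m \ge 2$ pointwise (the sizes $|U_1| = \cdots = |U_{\alpha-2}| = p-1$ being equal does not matter, because $s_m$ must lie in the specific block $U_{m-1}$). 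Hence the stabilizer is exactly $\{Id, [0,1]\}$. This uses only the disjointness and nonemptiness of the $U_k$ and that $R_0$ meets $U_0\times U_0$ in a nonempty symmetric relation (symmetry of $R_0$ is what makes $[0,1]$ actually fix $R_0\times T$).

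The third step finishes: if $S_{\tau}Q_k = S_{\sigma}Q_j$ then $S_{\rho}Q_k = Q_j$ with $\rho = \sigma^{-1}\circ\tau$, and by the above $\rho \in \{Id, [0,1]\}$. If $\rho = Id$ then $Q_k = Q_j$, so $k = j$ and $\sigma = \tau$, giving $(\sigma,j) = (\tau,k)$. If $\rho = [0,1]$ then $P_{01}(Q_k) = S_{[0,1]}Q_k = Q_j$; by property (q1) (or (3) in the $\alpha=3$ case), $P_{01}(Q_k) = Q_{p-2-k}$, so $j = p-2-k$, and $\sigma^{-1}\circ\tau = [0,1]$ gives $\sigma = \tau\circ[0,1]^{-1} = \tau\circ[0,1]$, yielding $(\sigma,j) = (\tau\circ[0,1], p-2-k)$. (Here I should double-check the direction of composition against the paper's conventions for $S_{\tau}$, since they compose via $s\circ\tau$; the formula $\sigma = \tau\circ[0,1]$ is the one consistent with $S_\tau S_\sigma = S_{\tau\circ\sigma}$ and $S_{\sigma^{-1}}S_\tau = S_{\sigma^{-1}\circ\tau}$.)

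The main obstacle is the stabilizer computation in the second step — specifically, making rigorous that no nontrivial permutation outside $\{Id,[0,1]\}$ can fix $R_0\times T$. The subtlety is that coordinates $2,\dots,\alpha-1$ all range over blocks of the \emph{same} cardinality $p-1$, so one cannot distinguish them by size alone; one must instead use that the blocks $U_1,\dots,U_{\alpha-2}$ are literally disjoint subsets of $U$, so for $s\in R_0\times T$ the value $s_m$ determines which $U_{m-1}$ it belongs to, and $s\circ\rho\in R_0\times T$ then pins down $\rho(m)$. One also needs the genericity that $R_0\times T$ actually contains sequences taking arbitrary prescribed values in the relevant blocks (true since $R_0 \neq \emptyset$ and each $U_k \neq \emptyset$), so that the constraint is felt on every coordinate.
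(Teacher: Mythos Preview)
Your proof is correct and follows essentially the same approach as the paper's. Both arguments pin down the permutation by using that $Q_k, Q_j \subseteq U_0\times U_0\times U_1\times\cdots\times U_{\alpha-2}$ with the $U_m$ pairwise disjoint, so membership of a coordinate in a specific $U_m$ determines the permutation on $\{2,\dots,\alpha-1\}$ pointwise and forces $\{0,1\}$ to be preserved as a set; then property (q1) handles the $[0,1]$ case. The only difference is organizational: the paper compares $\tau$ and $\sigma$ directly, showing $\tau(i)=\sigma(i)$ for $i>1$ by picking an $s\in S_\tau Q_k$, while you first compose to $\rho=\sigma^{-1}\circ\tau$ and identify the stabilizer $\{Id,[0,1]\}$ of $R_0\times T$. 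Your detour through the atom structure of ${\cal B}$ in the first step is unnecessary (the coordinate argument in your second step does all the work directly, since $Q_k,Q_j\subseteq R_0\times T$ already gives both $t$ and $t\circ\rho$ in $R_0\times T$), but it does no harm.
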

\begin{proof}
	Assume $S_{\tau}Q_k=S_{\sigma}Q_j$. We begin by showing that $\tau(i)=\sigma(i)$ for all $1<i<\alpha$. Indeed, $S_{\tau}Q_k\ne\emptyset$ by $Q_k\ne\emptyset$, so take any $s\in S_{\tau}Q_k$. Then $s\circ\tau\in Q_k$, therefore $s(\tau(i))\in U_{i-1}$ and $s(\tau(j))\notin U_{i-1}$ for all $j\ne i$, by $Q_k\subseteq U_0\times U_0\times T$. By $s\in S_{\tau}Q_k=S_{\sigma}Q_j$ and the analogous argument we get that $s(\sigma(i))\in U_{i-1}$. By $\tau,\sigma\in P(\alpha)$ we have that $\sigma(i)=\tau(j)$ for some $j$, so $s(\tau(j))\in U_{i-1}$, hence $\tau(i)=\tau(j)=\sigma(i)$. Thus $\tau(i)=\sigma(i)$ for all $1<i<\alpha$, so $\{\tau(0),\tau(1)\}=\{\sigma(0),\sigma(1)\}$, by $\tau,\sigma\in P(\alpha)$. If $\sigma(0)=\tau(0)$ then $\sigma=\tau$. If $\sigma(0)=\tau(1)$ then $\sigma=\tau\circ[0,1]$.  If $\sigma=\tau$ then $Q_j=Q_k$ since the $Q_k$'s are distinct and $S_{\tau}$ is a bijection on $A$. If $\sigma=\tau\circ[0,1]$ then $S_{\tau}Q_k=S_{\sigma}Q_j=S_{\tau}P_{01}Q_j$, from which we get $Q_k=P_{01}Q_j$, i.e., $Q_j=P_{01}Q_k=Q_{p-2-k}$ by $P_{01}\circ P_{01}=Id$ and property (q1) in the definition of the $Q_k$. 
\end{proof}

From $\tau$ and $\tau\circ[0,1]$ exactly one is monotonic on $0,1$ and this is how we will assign $(\sigma,j)$ to $S_{\tau}Q_k$.
For a permutation $\tau\in P(\alpha)$, we define $\tau^+$ as $\tau$ if $\tau(0)<\tau(1)$, and $\tau\circ[0,1]$ otherwise. Thus, $\tau^+$ agrees everywhere with $\tau$ except perhaps on $0,1$, and otherwise $\tau^+(0)<\tau^+(1)$. In particular,  $[0,1]^+=Id$. 
Then $\{ S_{\tau}Q_k \mid \tau\in P(\alpha), k<p-1\}=\{ S_{\tau^+}Q_k \mid \tau\in P(\alpha), k<p-1\}$.

For $i,j<\alpha$ we define, for $\tau\in P(\alpha)$ and $k<p-1$, 
\begin{description}
	\item[] $P^*_{ij}(S_{\tau^+}Q_k)=S_{([i,j]\circ\tau^+)^+}Q_k$,
	\item $P^*_{ij}a=P_{ij}a$ for all other $a\in At$,
	\item and we extend $P^*_{ij}$ to all elements of $A$ by  requiring it to be additive. 
\end{description}
We are ready to define ${\cal A}_p$ as
$${\cal A}_p = \langle A, +, -, C_i, D_{ij}, P^*_{ij}\rangle_{i,j<\alpha}\ . $$

\section{Nonrepresentability of ${\cal A}_p$} 
\label{s:4}

In this section, we prove that the algebras ${\cal A}_p$ constructed in Section \ref{s:3} are 
not isomorphic to set algebras. By what we said after Theorem \ref{t:2} in Section \ref{s:1}, this will imply that they are nonrepresentable.
Concrete equations true in $Pse_{\alpha}$ but not true in ${\cal A}_p$ will be exhibited in Section \ref{s:eq}. 
In the proof we will use only $P^*_{01}$, so in fact we will show that 
$${\cal A}_p^* = \langle A,+,-,C_i,D_{ij},P^*_{01}\rangle_{i,j<\alpha}$$
is not isomorphic to a set algebra. This will imply that ${\cal A}_p$ is not isomorphic to a set algebra, either.

Assume that $X,Y\subseteq U^{\alpha}$ are $\alpha$-place relations. We say that $X$ is {\it symmetric} when $P_{01}X=X$ and we say that $X\subseteq Y$ is a {\it big} subset of $Y$ when $Y\subseteq C_iX$ for all $i<\alpha$; the latter holds exactly when $C_iX=C_iY$ for all $i<\alpha$.

The proof of nonrepresentability of ${\cal A}_p^*$ hinges on the combinatorial fact that $R_0\times U^{\alpha-2}$ cannot be partitioned into $p-1$ big and symmetric relations. It is partitioned into $p-1$ big and nonsymmetric relations in the set algebra ${\cal A}^s$, but in ${\cal A}_p^*$ the modified operation $P_{01}^*$ ``states" that these big elements are symmetric (by $P_{01}^*(Q_k)=Q_k$).
We will see in Section \ref{s:5} that $R_0\times U^{\alpha-2}$ can be split into fewer symmetric big relations, the proof of representability of small subalgebras will be based on this fact.

We will use the following two lemmas.

\begin{lemma}   \label{l:x}
Let\/ $q\ne 0$ be a natural number, and suppose that\/ $S_0, \ldots, S_q$ are 
nontrivial equivalence relations on a set\/ $Z$, with the 
following properties: 
 \begin{itemize}
   \item $\bigcup_{i \leq q} S_i = Z \times Z$,
   \item $S_i \cap S_j = Id_Z$\quad for all\/ $i$ and\/ $j$, $i \neq j$, and
   \item $S_i \circ S_j= Z \times Z $\quad for\/
$i \neq j$.
 \end{itemize}
Then  each equivalence class of\/ $S_0$ has precisely\/ $q$ 
elements.
\end{lemma}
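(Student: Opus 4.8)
The claim is a clean piece of combinatorics about ``near-pencils'' of equivalence relations, so I would argue purely by counting. Fix an equivalence class $X$ of $S_0$ and pick a point $a\in X$; I want to show $|X|=q$. The key observation is that for each $i$ with $1\le i\le q$, the relation $S_i$ restricted to $X$ is an equivalence relation on $X$ that is \emph{trivial}: if $b,c\in X$ were $S_i$-equivalent and distinct, then $(b,c)\in S_0\cap S_i=Id_Z$, a contradiction. So each $S_i$ ($i\ge 1$) separates the points of $X$ pairwise. Now look at the $S_i$-class of $a$ for $i=1,\dots,q$; call it $A_i$. These classes all contain $a$, and $A_i\cap X=\{a\}$ by the previous sentence. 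Moreover for $i\ne j$ (both $\ge 1$) we have $A_i\cap A_j=\{a\}$ as well, since $(A_i\cap A_j)\times(A_i\cap A_j)\subseteq S_i\cap S_j=Id_Z$.

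\textbf{The counting step.} I would now use the covering hypothesis $\bigcup_{i\le q}S_i=Z\times Z$ together with $S_i\circ S_j=Z\times Z$. Take any point $z\in Z$. Since $S_1\circ S_0=Z\times Z$, there is a point $w$ with $(a,w)\in S_0$ and $(w,z)\in S_1$ — i.e. $w\in X$ and $z\in [w]_{S_1}$. This shows $Z=\bigcup_{w\in X}[w]_{S_1}$; but the $S_1$-classes of distinct points of $X$ are disjoint (they are distinct classes of the same equivalence relation, and they are distinct because two points of $X$ are never $S_1$-related). Hence $Z$ is partitioned into exactly $|X|$ classes of $S_1$. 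The same argument with any $S_i$ ($i\ge 1$) in place of $S_1$ shows $S_i$ has exactly $|X|$ classes; so all of $S_1,\dots,S_q$ have the same number of classes, namely $|X|$, and in particular every $S_0$-class has the same size $|X|$ — wait, that only gives that $S_0$-classes are equinumerous, which I should record but is not yet the value $q$.

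\textbf{Pinning down the value $q$.} To get the precise count I would fix $a$ and count the pairs $(a,z)$ with $z\ne a$ in two ways, using $\bigcup_{i\le q}S_i=Z\times Z$ and $S_i\cap S_j=Id_Z$: every $z\ne a$ lies in $[a]_{S_i}$ for exactly one $i\in\{0,1,\dots,q\}$ (at least one by the cover, at most one since a common non-diagonal pair would violate $S_i\cap S_j=Id_Z$). Therefore
\[
|Z|-1=\sum_{i=0}^{q}\bigl(|[a]_{S_i}|-1\bigr).
\]
Now I use that all classes of each $S_i$ are equinumerous — for $i=0$ this was shown above, and for $i\ge 1$ the identical argument (or the symmetry of the hypotheses in the indices $0,\dots,q$) applies — so writing $c_i:=|[a]_{S_i}|$ we have $|Z|=c_i\cdot(\text{number of }S_i\text{-classes})$ and the number of $S_i$-classes equals $|X|=c_0$ for $i\ge1$ (and for $i=0$ the number of classes is $c_1$ by symmetry, but more simply $|Z|=c_0\cdot(\#S_0\text{-classes})$). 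From $S_1\circ S_0=Z\times Z$ with the disjointness of classes one gets $|Z|=c_0 c_1$ (each pair $(a,z)$ factors uniquely once we also invoke $S_0\cap S_1=Id_Z$ to rule out double factorizations through a point of $X\setminus\{a\}$); by symmetry $c_0=c_1=\cdots=c_q=:c$ and $|Z|=c^2$. Plugging into the displayed identity gives $c^2-1=(q+1)(c-1)$, hence $c+1=q+1$, i.e. $c=q$. The main obstacle I anticipate is the bookkeeping in the ``$|Z|=c_0c_1$'' step — making the unique-factorization argument clean requires simultaneously using $S_i\circ S_j=Z\times Z$ (existence of the intermediate point) and $S_i\cap S_j=Id_Z$ (uniqueness), and one must be slightly careful that the ``non-triviality'' hypothesis is what guarantees $c\ge 2$ so that the division by $c-1$ is legitimate.
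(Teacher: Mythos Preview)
Your counting approach is correct when $Z$ is finite, but the lemma does not assume this, and in the paper's application (Section~\ref{s:4}) one cannot assume it: there $Z=V_0$ is a subset of the base of a hypothetical representing set algebra, and a finite algebra may well be represented on an infinite base. Your final step solves $c^2-1=(q+1)(c-1)$ for a natural number $c$, which presupposes $|Z|=c^2<\infty$; for infinite $Z$ the cardinal arithmetic collapses (both sides are just $|Z|$) and nothing forces $c$ to be finite, let alone equal to $q$. There is also a minor wrinkle at $q=1$, where you have not yet established $c_0=c_1$ before dividing, though there the hypotheses are in fact vacuous.

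The paper's argument avoids global counting entirely and works for arbitrary $Z$. Fix the $S_0$-class $X$ and any point $y\notin X$ (such $y$ exists, else every $S_i$ with $i\ge1$ would be trivial). For each $i$ with $1\le i\le q$ there is \emph{exactly one} $x\in X$ with $(x,y)\in S_i$: existence comes from $S_i\circ S_0=Z\times Z$ (factor any pair $(x_0,y)$ through an intermediate point, which must land in $X$), and uniqueness from $S_0\cap S_i=Id_Z$ (two such $x,x'$ would be $S_i$-related inside $X$, hence equal). Since the $S_i\setminus Id_Z$ are pairwise disjoint and cover $(x,y)$ for every $x\in X$, the assignment $i\mapsto x$ is a bijection $\{1,\dots,q\}\to X$, so $|X|=q$. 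Your own observation that each $S_i$ ($i\ge1$) is trivial on $X$ is exactly the uniqueness half of this; you were one step from the short route and took a long detour through $|Z|$ instead.
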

\begin{proof}
Note first that $S_0$ has at least two equivalence classes, for 
otherwise all of the $S_i$ but $S_0$ would be trivial, by $S_i \cap S_0 = Id_Z$, and $q\ne 0$. 
Let $X$ be an arbitrary equivalence class of $S_0$. Fix an arbitrary $y \in Z - X$ and consider the set $S^* = \{(x, y) \mid x \in X \}$. 
Certainly, $S_0 \cap S^* = \emptyset$. 
We claim that $|S_i \cap S^*| = 1$ for every $i$, $1 \leq i \leq q$. 
(Having verified this, the lemma will be proved, by $\bigcup S_i=Z\times Z$.)

Suppose that $|S_i \cap S^*| > 1$ for some $i$; say, $(x, y), (x^{\prime}, y) \in 
S_i$ $(x, x^{\prime} \in X, \, x \neq x^{\prime})$. 
Then, since $S_i$ is symmetric and transitive, we obtain that $(x, x^{\prime}) 
\in S_i$ also holds, yielding the contradiction $(x, x^{\prime}) \in S_0 \cap S_i 
\neq Id_Z$. 
Hence, it suffices to show that $S_i \cap S^* \neq \emptyset$ for every $i > 0$. 
Pick any $x \in X$. Then $(x,y)\in Z\times Z=S_i\circ S_0$, so
there is a $z \in Z$ with $(x, z) \in S_0$ and $(z, y) \in 
S_i$. 
Since $X$ is an equivalence class of $S_0$, it must be the case that $z \in X$. 
Thus, $(z, y) \in S_i \cap S^*$. 
\end{proof}

\begin{lemma}   \label{l:y}
Let $Z$ be finite. If there exist\/ $|Z|-1$ mutually disjoint, symmetric and irreflexive 
relations\/ $S_0, \ldots , S_{|Z|-2}$ each with domain $Z$, such that\/  
$\bigcup_{i < |Z|-1} S_i = Z \times Z - Id_Z$, then\/ $Z$ has an even 
number of elements.
\end{lemma}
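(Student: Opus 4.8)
The plan is to count, in two different ways, the ordered pairs $(a,b)$ with $a,b \in Z$ and $a \neq b$. On the one hand, there are exactly $|Z|^2 - |Z| = |Z|(|Z|-1)$ such pairs, since $\lvert Z \times Z - Id_Z \rvert = |Z|^2 - |Z|$. On the other hand, because the $S_i$ are mutually disjoint and their union is $Z \times Z - Id_Z$, every such pair lies in exactly one $S_i$, so $|Z|(|Z|-1) = \sum_{i < |Z|-1} |S_i|$. The key observation is that each $S_i$ is symmetric and irreflexive, hence $(a,b) \in S_i$ iff $(b,a) \in S_i$ and $(a,b) \neq (b,a)$; therefore $S_i$ decomposes into two-element sets $\{(a,b),(b,a)\}$, and $|S_i|$ is even for every $i$. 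Consequently the sum $\sum_{i<|Z|-1}|S_i|$ is even, so $|Z|(|Z|-1)$ is even.

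To finish, I would argue that $|Z|(|Z|-1)$ being even forces $|Z|$ to be even. Here is where the hypothesis on the \emph{domains} enters, and it is the one subtle point: if $|Z|$ were odd, then $|Z|-1$ would be even and $|Z|(|Z|-1)$ would indeed be even as well, so the parity count above does \emph{not} by itself pin down $|Z|$. Thus the real content must come from the requirement that each $S_i$ has domain exactly $Z$. So instead I would fix a single point $z \in Z$ and count the pairs of the form $(z,b)$ with $b \neq z$: there are exactly $|Z|-1$ of them. Since each $S_i$ has domain $Z$, for each $i$ there is at least one $b$ with $(z,b) \in S_i$; and since the $S_i$ are disjoint and cover $Z \times Z - Id_Z$, the sets $\{\, b : (z,b) \in S_i \,\}$ partition $Z \setminus \{z\}$ into $|Z|-1$ nonempty blocks. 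A partition of a set of size $|Z|-1$ into $|Z|-1$ nonempty blocks must have every block of size exactly $1$; hence for each $i$ there is a \emph{unique} $b_i$ with $(z, b_i) \in S_i$, and $i \mapsto b_i$ is a bijection from $\{0,\dots,|Z|-2\}$ to $Z \setminus \{z\}$.

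Now combine the two facts. Fix $z$ and let $b_i$ be as above. By symmetry of $S_i$ we also have $(b_i, z) \in S_i$. Consider the map $\sigma$ on the index set sending $i$ to the unique index $\sigma(i)$ such that $z$ is the $S_{\sigma(i)}$-partner of $b_i$ — that is, $(b_i, z) \in S_{\sigma(i)}$; since $(b_i,z)\in S_i$ and the $S_j$ are disjoint, in fact $\sigma(i) = i$. That gives nothing new, so the cleaner route is: since each $S_i$ is a symmetric irreflexive relation with domain $Z$, the graph $(Z, S_i)$ has every vertex of degree $\ge 1$, and the disjointness-plus-cover condition says these $|Z|-1$ graphs partition the edge set of the complete graph $K_{|Z|}$, which has $\binom{|Z|}{2}$ edges and is $(|Z|-1)$-regular. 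Each $S_i$, being a nonempty subgraph of $K_{|Z|}$ in a partition into $|Z|-1$ parts, must be a perfect matching: indeed $K_{|Z|}$ has $\binom{|Z|}{2} = \tfrac{|Z|(|Z|-1)}{2}$ edges split into $|Z|-1$ nonempty parts, forcing each part to have $\le \tfrac{|Z|}{2}$ edges, and the domain condition forces each part to touch all $|Z|$ vertices, hence to have $\ge \tfrac{|Z|}{2}$ edges; so each $S_i$ is a perfect matching and in particular $|Z|$ is even. The main obstacle is organizing these inequalities so the "forcing" is airtight — specifically checking that a partition of the $(|Z|-1)$-regular graph $K_{|Z|}$ into $|Z|-1$ spanning subgraphs must have each piece $(|Z|/2)$-edge, which is exactly the pigeonhole squeeze just described; once that is pinned down, evenness of $|Z|$ is immediate.
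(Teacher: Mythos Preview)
Your final argument is correct, but one step is misstated: splitting $\binom{|Z|}{2}$ edges into $|Z|-1$ nonempty parts does \emph{not} by itself force each part to have at most $|Z|/2$ edges. What you actually need (and already have) is the reverse inequality: the domain condition gives each $S_i$ at least $|Z|/2$ edges, and summing over the $|Z|-1$ parts yields at least $(|Z|-1)\cdot|Z|/2=\binom{|Z|}{2}$, which equals the total; hence each part has exactly $|Z|/2$ edges, an integer, so $|Z|$ is even. With that reordering the squeeze is airtight; the two false starts before it can simply be deleted.

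The paper's proof is shorter and, interestingly, is the route you discovered and then abandoned. Your observation that for each fixed $z$ and each $i$ there is a \emph{unique} $b_i$ with $(z,b_i)\in S_i$ is exactly the paper's key step: in its language, viewing the $S_i$ as an edge-coloring of $K_{|Z|}$, each of the $|Z|-1$ colors appears exactly once among the $|Z|-1$ edges incident to $z$. From there the paper simply notes that the color-$0$ edges form a perfect matching on $Z$ (each vertex lies in exactly one), so $|Z|$ is even. Your averaging argument reaches the same endpoint by a global count; the paper's is local and avoids any inequality manipulation. Both ultimately show each $S_i$ is a perfect matching, so neither gains extra content over the other.
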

\begin{proof}
Assign color $i$ $(0 \leq i \leq |Z|-2)$ to an unordered pair $\{ x, y \} 
\subseteq Z$ if $(x, y) \in S_i$. 
Since the $S_i$ are symmetric, and their (disjoint) union is the set of all 
unordered pairs in $Z$, each pair gets precisely one color. 
On one hand, each color occurs on the set $N(x) = \{\{ x,y\} \mid y \in Z - \{x\} 
\}$ for every fixed $x \in Z$ (for otherwise $x$ would not belong to the domain of $S_i$ for some $i$). 
On the other hand, $|N(x)| = |Z|-1$, therefore each of the $|Z|-1$ colors occurs 
{\em precisely}\/ once in $N(x)$. 
Consequently, the pairs $\{z, z^\prime\}$ of color $0$ are mutually disjoint and 
their union is $Z$, so that $|Z|$ is even, indeed. 
\end{proof}

\begin{proof}[Proof of nonrepresentability of ${\cal A}_p^*$.]
Assume that 
there is an isomorphism $h:{\cal A}_p^* \to h({\cal A}_p^*)$ where $h({\cal A}_p^*)$ is a set algebra. We derive a contradiction.

First, by using $h$ and $R_i$, $i<p+1$, we get binary relations $\bar{S_i}$, $i<p+1$ that satisfy the hypotheses of Lemma \ref{l:x}, as follows. 
Let $V$ denote the base set of $h({\cal A}_p^*)$ and let $m<p+1$. Then $h(C_1\dots C_{\alpha-1}(R_0\times T))=h(U_0\times U^{\alpha-1})=V_0\times V^{\alpha-1}$, for some set $V_0$, because $C_1\dots C_{\alpha-1}(R_0\times T)=a=C_1\dots C_{\alpha-1}a$, where
$a=U_0\times U^{\alpha-1}$. Then $h(U_0\times U_0\times U^{\alpha-2})=V_0\times V_0\times V^{\alpha-2}$ by $U_0\times U_0\times U^{\alpha-2}=a\cap C_0(D_{01}\cap a)$ and $V_0\times V_0\times V^{\alpha-1}=b\cap C_0(D_{01}\cap b)$ where $b=h(a)=V_0\times V^{\alpha-1}$.
Similarly,  $h(R_m\times U^{\alpha-2})=S_m\times V^{\alpha-2}$, for some binary relation $S_m$ on $V_0$. Moreover, 
$\bar{S}_m=S_m\cup Id_{V_0}$ is an equivalence relation on $V_0$, because $R_m\cup Id_{U_0}$ is an equivalence relation on $U_0$ and being an equivalence relation can be expressed in cylindric set algebras of dimension at least $3$ for binary relations, as follows.%
\footnote{For details see, e.g., the chapter on relation algebra reduct \cite[Chapter 5.3]{HeMoT:II}.} A binary relation $S$ on $V_0$ is symmetric iff 
${}_2s(0,1)(S\times V^{\alpha-2})=S\times V^{\alpha-2}$, and $S$ is transitive iff $(S\times V^{\alpha-2}) ; (S\times V^{\alpha-2})\subseteq S\times V^{\alpha-2}$  where for distinct $i,j,k$
\begin{description}
	\item{} ${}_ks(i,j)x=s^k_is^i_js^j_kx$\quad with\quad $s^i_jx=c_i(d_{ij}\cdot x)$\quad and
	\item{} $x;y = c_2(s^1_2x\cdot s^0_2y)$\quad for all $x,y$ and distinct $i,j,k<\alpha$.
\end{description}
The $S_m$'s are nonempty and disjoint from each other, since the $R_m$'s are, and also each $S_m$ is disjoint from $Id_{V_0}$ since $R_m$ is disjoint from $Id_{U_0}$. Further, $\bigcup S_m=V_0\times V_0 - Id_V$ by $\bigcup R_m=U_0\times U_0 - Id_U$, and similarly $S_m\circ S_k =V_0\times V_0 - (S_m\cup S_k\cup Id_V)$ for distinct $m,k$. Thus, $\bar{S}_m$, $m<p+1$ and $V_0$ satisfy the conditions of Lemma \ref{l:x}, hence $|W|=p$, for each equivalence class $W$ of $\bar{S}_0=S_0\cup Id_{V_0}$, by Lemma \ref{l:x}.

Now, we will show that $|W|$ is even, by using Lemma \ref{l:y} and the system $Q_k$, $k<p-1$ of $\alpha$-place relations together with $P^*_{01}$. 
To use Lemma \ref{l:y}, we define a system $Z_i$, $i<p-1$ of binary  relations on $W$ by using the images  $P_k=h(Q_k)$ of the relations $Q_k$, $k<p-1$. By $C_1\dots C_{\alpha-1}Q_0=U_0\times U^{\alpha-1}$ and $h$ being a homomorphism we have that $C_1\dots C_{\alpha-1}P_0=V_0\times V^{\alpha-1}$. Thus, by $ W\subseteq V_0$, there is $s\in P_0$ with $s_0\in W$. For an $\alpha$-sequence $s$ and elements $a,b$, let $s(0\slash a,1\slash b)$ be the sequence that agrees everywhere with $s$ except perhaps on $0,1$, and on $0,1$ it takes the values $a,b$ respectively. For all $i<p-1$, define
$$Z_i = \{(a,b)\in W\times W \mid s(0\slash a,1\slash b)\in P_i\} .$$
Let $u=s_0\in W$. For all $w\in W - \{ u\}$ we have that $(u,w)$ is in a unique $Z_i$ because of the following. For all $i<p-1$ there is $w_i\in W$ such that $(u,w_i)\in Z_i$, by $C_1Q_0=C_1Q_i$ and by $P_i\subseteq S_0\times V^{\alpha-2}$. Now, all $w_i$ and $u$ are distinct because $P_i\cap P_j=\emptyset$ and $P_i\cap Id_{V_0}=\emptyset$ for distinct $i,j$. By $|W|=p$ then we have that $W=\{ u, w_0, \dots, w_{p-2}\}$. By using $C_0Q_0=C_0Q_i$, by a similar argument we have that for all $w\in W$, $w\ne u$ and $i<p-1$ there is a unique $u_i$ such that $(u_i,w)\in Z_i$. Using $C_1Q_0=C_1Q_i$ again, we get that $\bigcup_{i<p-1}Z_i=W\times W - Id_W$ and the domain of $Z_i$ is $W$ for all $i<p-1$. The $Z_i$ are irreflexive since $R_0$ is irreflexive, and the $Z_i$ are symmetric by $P^*_{01}Q_i=Q_i$ and the definition of $Z_i$. By Lemma \ref{l:y} then $|W|$ is even.

However, we have seen that $|W|=p$ where $p\ge 3$ is odd,
and so we arrived at a contradiction.
By this, we have proved that ${\cal A}_p^*$ is not isomorphic to a set algebra. 
\end{proof}

\section{Representable subalgebras of ${\cal A}_p$}
\label{s:5}

In this section, we prove that the $n$-generated subalgebras of ${\cal A}_p$ are isomorphic to set algebras when  $p>2^{\alpha!n}+1$. 
This will imply that the $n$-generated subalgebras of ${\cal A}_p^*$ are also isomorphic to set algebras, because if $X\subseteq A$ then the subalgebra of ${\cal A}_p^*$ generated by $X$ is a subalgebra of the appropriate reduct of the subalgebra of ${\cal A}_p$ generated by $X$. If the latter is a set algebra then so is the former.

\bigskip

Let ${\cal A}={\cal A}_p=\langle A,+,-,C_i,D_{ij},P^*_{ij}\rangle_{i,j<\alpha}$ with $p$ as above, and let $X\subseteq A$ be arbitrary such that $|X|\le n$. We are going to show that the subalgebra of ${\cal A}$ generated by $X$ is representable.
The idea of the proof is to show that the subalgebra does not separate at least two $Q_k$'s, and when we ``split" $R_0\times T$ to only $p-2$ big parts, the so obtained $Q_k$'s can be chosen to be symmetric.

Let $X_1=\{ P^*_{i_1j_1}\dots P^*_{i_mj_m}x : x\in X, 0\le m, i_1,j_1,\dots,i_m,j_m\in\alpha\}$, let $Bg X_1$ denote the set Boolean-generated by $X_1$ in ${\cal A}$, that is, $Bg X_1$ is the smallest subset of $A$ containing $X_1$ and closed under the Boolean operations $+,-$ of ${\cal A}$, and finally let $X_2$ be the set of atoms of $Bg X_1$. Then $|X_2| \le 2^{|X_1|}$. 
We are going to show that $|X_1|\le \alpha!n$. This will imply $$|X_2|< p-1$$ by our assumption on $p$. Recall that $\tau^+$ was defined in Section \ref{s:2}.

\begin{lemma}\label{plem} 
	Let $m\ge0, i,j,i_1,j_1,\dots,i_m,j_m\in\alpha$ and $\sigma=[i_1,j_j]\circ\dots\circ[i_m,j_m]$. For all $\tau\in P(\alpha)$, $k<p-1$ and $a\in At$ not of form $S_{\tau}Q_k$ for any $\tau$ and $k$ the following statements hold.
	\begin{description}
		\item[(i)] $P^*_{i_1j_1}\dots P^*_{i_mj_m}S_{\tau^+}Q_k = S_{(\sigma\circ \tau^+)^+}Q_k$.
				\item[(ii)] $P^*_{i_1j_1}\dots P^*_{i_mj_m}a = S_{\sigma}a$.
				\item[(iii)] $P_{ij}^*:A\to A$ is a Boolean automorphism of ${\cal A}$, i.e., it is a bijection that respects the operations $+,-$ of ${\cal A}$.
	\end{description}
\end{lemma}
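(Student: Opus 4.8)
The plan is to prove all three statements together by induction on $m$, treating (iii) as the base case $m=0$ extended one step at a time. First I would establish (iii): $P^*_{ij}$ is a bijection on $A$. Since $P^*_{ij}$ is defined to be additive and $\langle A,+,-\rangle$ is a finite Boolean algebra, it suffices to check that $P^*_{ij}$ permutes $At$. On the atoms not of the form $S_{\tau}Q_k$ it agrees with $P_{ij}=S_{[i,j]}$, which is a bijection on that part of $At$ (by Lemma \ref{at2-lem} and the fact that $S_{[i,j]}$ maps $At{\cal B}-\{S_{\tau}(R_0\times T)\}$ to itself); on the atoms $S_{\tau^+}Q_k$ it sends $S_{\tau^+}Q_k\mapsto S_{([i,j]\circ\tau^+)^+}Q_k$, and $\tau^+\mapsto([i,j]\circ\tau^+)^+$ together with the identity on $k$ is a bijection of $\{(\tau^+,k)\}$ because $[i,j]\circ(-)$ is a bijection of $P(\alpha)$ and $(-)^+$ is a retraction onto a set of representatives (using Lemma \ref{tau-l} to see these are genuinely distinct atoms). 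Additivity then forces $P^*_{ij}$ to respect $+$; respecting $-$ follows since a Boolean endomorphism that is a bijection on atoms is a Boolean automorphism. This gives (iii).

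Next I would prove (i) and (ii) by induction on $m$. For $m=0$ both are the identity statement $S_{Id}a=a$, which holds by the remark after (S3). For the inductive step, suppose the claim holds for $m$ and consider $P^*_{ij}P^*_{i_1j_1}\cdots P^*_{i_mj_m}$. By the induction hypothesis, $P^*_{i_1j_1}\cdots P^*_{i_mj_m}S_{\tau^+}Q_k=S_{(\sigma\circ\tau^+)^+}Q_k$, which is again an atom of the form $S_{\rho^+}Q_k$ with $\rho^+=(\sigma\circ\tau^+)^+$. Applying $P^*_{ij}$ to it, by definition, yields $S_{([i,j]\circ(\sigma\circ\tau^+)^+)^+}Q_k$, and it remains to check that $([i,j]\circ(\sigma\circ\tau^+)^+)^+=(([i,j]\circ\sigma)\circ\tau^+)^+$; this is a routine identity about the $(-)^+$ operation, using that $\rho^+$ and $\rho$ differ only by a right factor of $[0,1]$ when they differ, and that $S_{\rho^+}Q_k=S_{\rho}Q_k$ for the relevant $\rho$ by property (q1) and Lemma \ref{tau-l}. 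Since $[i,j]\circ\sigma=[i,j]\circ[i_1,j_1]\circ\cdots\circ[i_m,j_m]$ is exactly the permutation associated to the longer word, (i) follows. For (ii), if $a\in At$ is not of the form $S_{\tau}Q_k$, then by the induction hypothesis $P^*_{i_1j_1}\cdots P^*_{i_mj_m}a=S_{\sigma}a$, and $S_{\sigma}a$ is again such an atom (the non-$Q_k$ part of $At$ is $S_{\rho}$-closed for all $\rho$, as noted in the proof of Lemma \ref{at2-lem}); applying $P^*_{ij}$, which agrees with $S_{[i,j]}$ on these atoms, gives $S_{[i,j]}S_{\sigma}a=S_{[i,j]\circ\sigma}a$ by (S2), which is the required formula.

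The main obstacle I anticipate is bookkeeping around the $(-)^+$ normalization: one must verify cleanly that for any $\tau,\sigma\in P(\alpha)$ one has $S_{(\sigma\circ\tau^+)^+}Q_k=S_{\sigma\circ\tau}Q_k$ as elements of $A$, and that $(-)^+$ composes correctly with left-multiplication, i.e. $([i,j]\circ\rho^+)^+=([i,j]\circ\rho)^+$ whenever $\rho$ and $\rho^+$ agree off $\{0,1\}$. The content here is exactly Lemma \ref{tau-l}: the only coincidence among the atoms $S_{\tau}Q_k$ is $S_{\tau}Q_k=S_{\tau\circ[0,1]}Q_{p-2-k}$, so replacing $\tau$ by $\tau^+$ (which postcomposes by $[0,1]$ or not) changes $S_{\tau}Q_k$ into $S_{\tau^+}Q_{k'}$ for the appropriate $k'\in\{k,p-2-k\}$ — but the index $k$ plays no role in (i) beyond being preserved, so this is harmless once one is careful that "preserved" is interpreted modulo this coincidence. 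Everything else is a direct unwinding of the definition of $P^*_{ij}$ and the identities (S1)--(S3), so the proof is short once this normalization lemma is pinned down.
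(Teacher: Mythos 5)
Your proof is correct and follows essentially the same route as the paper's: (i) and (ii) by induction on $m$ using the normalization identity $(\rho\circ\eta^+)^+=(\rho\circ\eta)^+$ (you peel the word from the left where the paper peels from the right, which is immaterial), and (iii) by reducing, via additivity and finiteness of $A$, to showing that $P^*_{ij}$ permutes $At$, with injectivity on the atoms $S_{\tau^+}Q_k$ resting on Lemma \ref{tau-l} exactly as in the paper. One caution: the identity $S_{(\sigma\circ\tau^+)^+}Q_k=S_{\sigma\circ\tau}Q_k$ floated in your closing paragraph is false in general (the index can flip to $p-2-k$), but your actual induction never uses it --- it only needs $([i,j]\circ\rho^+)^+=([i,j]\circ\rho)^+$, which is correct.
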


\begin{proof}
To prove (i), we proceed by induction on $m$. The statement holds for $m=0$ since in this case $\sigma=Id$, and $\tau^+=\tau^{++}$. Assume that the statement is true for $m$, we show that it is true for $m+1$. Let $\delta=[i_{m+1},j_{m+1}]$ and $\sigma_m=[i_1,j_1]\circ\dots\circ[i_m,j_m]$, $\sigma_{m+1}=\sigma_m\circ\delta$.
\begin{description}
	\item[] $P^*_{i_1j_1}\dots P^*_{i_{m+1}j_{m+1}}(S_{\tau^+}Q_k)  = $
	\item[] $P^*_{i_1j_1}\dots P^*_{i_mj_m}(P^*_{i_{m+1}j_{m+1}}(S_{\tau^+}Q_k)) =$
	\item[] $P^*_{i_1j_1}\dots P^*_{i_mj_m}(S_{(\delta\circ\tau^+)^+}Q_k)=$
	 \item[] $S_{(\sigma_m\circ(\delta\circ\tau^+)^+)^+}Q_k = 
	 S_{(\sigma_m\circ\delta\circ\tau^+)^+}Q_k = S_{(\sigma_{m+1}\circ\tau^+)^+}Q_k$.
\end{description}
In the penultimate equality we used validity of $(\rho\circ\eta^+)^+ = (\rho\circ\eta)^+$ for all permutations $\rho,\eta$. Indeed, $\rho\circ\eta^+$ and $\rho\circ\eta$ agree everywhere except perhaps on $0$ and $1$ and then $(\rho\circ\eta^+)^+$ and $(\rho\circ\eta)^+$ agree everywhere since they agree on $0$ and $1$, too.
We get (ii) immediately by the definition of $P^*_{ij}$. %

To prove (iii), it is enough to show that $P_{ij}^*:At\to At$ is a bijection, since $P_{ij}^*$ is additive by its definition and $A$ is finite. Now, $P_{ij}^*(a)\in At$ for $a\in At$, by the definition of $P_{ij}^*$. Since $At$ is finite, it remains to show that $P_{ij}^*$ is injective. If $a\in At{\cal B}$ and $b\in At$, $b\ne a$ then $P_{ij}^*(a)$ and $P_{ij}^*(b)$ are distinct by the definition of $P_{ij}^*$ and since $\{ S_{\tau}Q_k \mid \tau\in P(\alpha), k< p-1\}$ is disjoint from $At{\cal B} - \{ S_{\tau}(R_0\times T) \mid \tau\in P(\alpha)\}$. So, let $a=S_{\tau}Q_k$, $b=S_{\sigma}Q_m$ and assume that$P_{ij}^*(S_{\tau}Q_k)=P_{ij}^*(S_{\sigma}Q_m)$. We want to show $a=b$.  We may assume $\tau(0)<\tau(1)$ and $\sigma(0)<\sigma(1)$, i.e., $\tau=\tau^+$ and $\sigma=\sigma^+$ by $S_{\tau}Q_\ell=S_{\tau\circ[0,1]}Q_{p-\ell-2}$ for all $\ell<p-1$. We may assume also $i<j$ by $P_{ij}^*=P_{ji}^*$.

By the definition of $P_{ij}^*$, we have $S_{f^+}Q_k=S_{g^+}Q_j$, where $f=[i,j]\circ\tau$ and $g=[i,j]\circ\sigma$. By Lemma \ref{tau-l}, we have either $f^+=g^+$ or $f^+=g^+\circ[0,1]$. We cannot have the second case, because $f^+(0)< f^+(1)$ and $g^+(0)< g^+(1)$, by the definition of $f^+, g^+$. Then, by Lemma \ref{tau-l} we have $f^+=g^+$ and $j=k$. Hence, for showing $a=b$, it is enough to show $\tau=\sigma$, and for this, it is enough to show $f=g$.

We have $f^+=g^+$.  So $f\in\{ g, g\circ[0,1]\}$. Suppose for contradiction that $f=g\circ[0,1]$. That is, $[i,j]\circ\tau=[i,j]\circ\sigma\circ[0,1]$. Then $\tau=\sigma\circ[0,1]$. So $\tau(0)=(\sigma\circ[0,1])(0)=\sigma(1)$  and $\tau(1)=(\sigma\circ[0,1])(1)=\sigma(0)$, which is incompatible with our assumption that $\tau(0)<\tau(1)$ and $\sigma(0)<\sigma(1)$. Hence $f=g$ as reqired.
\end{proof}

By Lemma \ref{plem} we get that the value of $P^*_{i_1j_1}\dots P^*_{i_mj_m}x$ depends only on $\sigma=[i_1,j_1]\circ\dots\circ[i_m,j_m]$,  since the $P^*_{ij}$'s are additive and $x$ is a sum of atoms. Since $\sigma$ is a permutation of $\alpha$, there are at most $\alpha!$ values in $X_1$ for all $x\in X$, hence we get $|X_1|\le\alpha!n$, and from this we obtain $|X_2|<p-1$ as mentioned already.

Before proceeding, it is convenient to introduce a notation. For  $\sigma\in P(\alpha)$ we define $S^*_{\sigma} : A\to A$ by 
$$S^*_{\sigma}x = P^*_{i_1j_1}\dots P^*_{i_mj_m}x\quad\mbox{ where }\sigma=[i_1,j_1]\circ\dots\circ [i_m,j_m] .$$
This definition is sound by Lemma \ref{plem}.

\bigskip
We are going to show that there are distinct $k,m< p-1$ such that no element of $X$ separates $S^*_{\sigma}Q_k$ from $S^*_{\sigma}Q_m$, for all $\sigma\in P(\alpha)$. For all $i<p-1$ there is a unique $a_i\in X_2$ such that $Q_i\le a_i$, since the $Q_i$ are atoms in ${\cal A}$ and $X_2$ is a partition of the unit of $A$.  There are distinct $k,m<p-1$ such that $a_k=a_m$ since $|X_2|<p-1$. Thus, no element of $X_2$ separates $Q_k$ from $Q_m$, i.e., $Q_k\le x$ iff $Q_m\le x$ for all $x\in X_2$. By its definition, $X_1$ is closed under the $S^*_{\sigma}$'s, i.e., $S^*_{\sigma}x\in X_1$ for all $x\in X_1$ and $\sigma\in P(\alpha)$. The $S^*_{\sigma}$'s are Boolean automorphisms in ${\cal A}$ by Lemma \ref{plem}(iii) and the definition of $S_{\sigma}^*$. Therefore $Bg X_1$ and $X_2$ are also closed under the $S^*_{\sigma}$'s. This implies that no element of $X_2$ separates $S^*_{\sigma}Q_k$ from $S^*_{\sigma}Q_m$, either, for all $\sigma\in P(\alpha)$. By $X\subseteq Bg X_1 = Bg X_2$ then we get that no element of $X$ separates $S^*_{\sigma}Q_k$ from $S^*_{\sigma}Q_m$, for all $\sigma\in P(\alpha)$. 
Define
$$C = \{ a\in A \mid (S^*_{\sigma}Q_k\le a\mbox{ iff }S^*_{\sigma}Q_m\le a)\quad\mbox{ for all $\sigma\in P(\alpha)$} \} .$$
We have just seen that $X\subseteq C$. We are going to show that $C$ is closed under the operations of ${\cal A}$, hence it is the universe of a subalgebra ${\cal C}$ of ${\cal A}$. By $X\subseteq C$ then the subalgebra generated by $X$ is a subalgebra of ${\cal C}$. Hence it will be enough to prove that ${\cal C}$ is 
isomorphic to a set algebra.

Clearly, $C$ is closed under the Boolean operations $+,-$, and the diagonal constants $D_{ij}\in C$ for all $i,j<\alpha$.
Further, $C$ is closed under $P^*_{ij}$ because of the following. Assume $a\in C$. Then $S^*_{\sigma}Q_k\le P^*_{ij}a$ iff $P^*_{ij}S^*_{\sigma}Q_k\le P^*_{ij}P^*_{ij}a = a$ iff $S^*_{[i,j]\circ\sigma}Q_k\le a$ iff $S^*_{[i,j]\circ\sigma}Q_m\le a$ iff $S^*_{\sigma}Q_m\le P^*_{ij}a$. It remains to show that $C$ is closed under the cylindrification operations $C_i$. Assume that $a\in C$ and $S^*_{\sigma}Q_k\le C_ia$. Then $C_iS^*_{\sigma}Q_k\le C_ia$. However, $C_iS^*_{\sigma}Q_k=C_iS^*_{\sigma}Q_m$ by condition (q0) in the choice of the $Q_k$, by Lemma \ref{plem} and the properties of the set operations $S_{\tau}$. Hence $S^*_{\sigma}Q_m\le C_ia$ and we are done with showing that ${\cal C}$ is a subalgebra of ${\cal A}$. 

It is easy to check that the atoms of ${\cal C}$ are
those of ${\cal A}$, except that both $S_{\tau}Q_k$ and $S_{\tau}Q_m$ are replaced with $S_{\tau}(Q_k+Q_m)$, for all $\tau\in P(\alpha)$. 
This implies that also  both $S_{\tau}Q_{p-2-k}$ and $S_{\tau}Q_{p-2-m}$ are replaced with $S_{\tau}(Q_{p-2-k}+Q_{p-2-m})$, because $Q_{p-2-k}=P_{01}Q_k$ and the same holds for $Q_m$.
Let $Q_i'=Q_i$ where $i\notin\{ k,m,p-2-k, p-2-m\}$, let $Q_k'=Q_m'=Q_k+Q_m$ and let $Q_{p-2-k}'=Q_{p-2-m}'=Q_{p-2-k}+Q_{p-2-m}$. 
Then the set of atoms of ${\cal C}$ is
$$ At_C = \{ S_{\tau^+}Q_i' \mid \tau\in P(\alpha), i<p-1\}\cup At{\cal B}-\{ S_{\tau}(R_0\times T) \mid \tau\in P(\alpha)\} $$
and $C=\{\sum X : X\subseteq At_C\}$. Note that $|\{ Q_i' \mid i<p-1\}|\in \{ p-2, p-3\}$ depending on whether $Q_m=Q_{p-2-k}$ or not.

We are going to exhibit a polyadic equality set algebra ${\cal D}$ that is isomorphic to ${\cal C}$. We use $U, U_i, R_i$ as in the algebra ${\cal A}$. The algebra ${\cal D}$ will be like ${\cal A}$ except that, instead of the $Q_i$ we use a different partition of $R_0\times T$. 
As one may recall, the idea of the proof of nonrepresentability of ${\cal A}$ was that $R_0\times T$ cannot be partitioned into $p-1$ symmetric big relations. The next lemma shows that such a partition into $p-2$ parts is possible.

\begin{lemma}\label{klem}
There is a partition of $R_0 \times T$ into $p-2$ mutually disjoint relations 
$K_0, K_1,\ldots,K_{p-3}$ in such a way that the following properties of the 
cylindrifications and the polyadic operation $P_{01}$ are valid for 
every \vspace{0.06in} $j < p-2$.
\begin{itemize}
	\item[{\bf (q0)}] $ \quad C_i (K_j) = C_i(R_0 \times T)$, \quad for $i<\alpha $,
	\item[{\bf (k1)}] $ \quad P_{01} (K_j) = K_j$.
\end{itemize}
\end{lemma}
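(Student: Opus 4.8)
The plan is to reduce the statement to a purely combinatorial fact about the complete graph on $p$ vertices and then to produce the required colouring essentially by hand.

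\emph{Reduction to one line and to $T=U_1$.} Since $R_0=\bigcup_{j<p}(L_{0,j}\times L_{0,j}-Id_{U_0})$ is a union over the $p$ parallel lines of the class $\Lambda_0$, and $P_{01}$, the cylindrifications $C_i$ with $i\ge 2$, and the conditions ``$C_iK=C_i(R_0\times T)$'' all respect this decomposition, it suffices to produce, inside each $(L\times L-Id_L)\times T$ for one line $L$ with $|L|=p$, a partition into $p-2$ parts that are symmetric in the coordinates $0,1$ and ``big'' in the per-line sense: for every such part $K$, every ordered pair $(a,b)$ with $a\ne b$ in $L$ has some $t$ with $(a,b,t)\in K$, and every $b\in L$ and $t\in T$ have some $a\ne b$ with $(a,b,t)\in K$. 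The coordinates of $T=U_1\times\dots\times U_{\alpha-2}$ are handled as in the proof of Lemma~\ref{q2-lem}: identify $U_1$ with $\mathbb Z_{p-1}$, fix bijections $U_m\to\mathbb Z_{p-1}$, and let $g\colon T\to U_1$ send $t$ to the sum of the corresponding values, so that changing any single coordinate of $t$ already makes $g(t)$ run through all of $U_1$; given a solution $K'_0,\dots,K'_{p-3}$ of the case $T=U_1$, the sets $K_j=\{s\mid(s_0,s_1,g(s_2,\dots,s_{\alpha-1}))\in K'_j\}$ form a solution for general $T$, the verification of (q0) and (k1) being routine from the bigness and symmetry of the $K'_j$ and the coordinatewise surjectivity of $g$. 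So it remains to treat one line with $T=U_1$.

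\emph{The combinatorial core.} Writing $L=GF(p)$ and identifying $(L\times L-Id_L)\times U_1$ with the set of pairs $(e,z)$, $e$ an edge of $K_p$ and $z\in U_1$ (symmetry lets us take $e$ unordered), the requirement becomes: colour the $\binom p2(p-1)$ pairs $(e,z)$ with $p-2$ colours so that (i) for each colour and each fixed $z$ the edges of that colour at $z$ cover every vertex of $K_p$, and (ii) for each colour and each edge $e$ some $z$ colours $(e,z)$ that colour. For $p=3$ there is a single colour and $K_0=R_0\times T$ works, as $R_0$ is symmetric; so assume $p\ge 5$. A short count shows the constraint is tight: (i) forces, in each $z$-slice, a partition of $E(K_p)$ into $p-2$ classes each meeting every vertex, which for odd $p$ must consist of $p-3$ near-perfect matchings (a path on three vertices together with a perfect matching on the remaining $p-3$) and one class with exactly one more edge --- a single edge of slack.

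\emph{Constructing the colouring, and where the difficulty lies.} I would first build one covering $(p-2)$-edge-colouring $\chi_0$ of $K_p$: start from the proper $p$-edge-colouring $\{x,y\}\mapsto x+y$ of $K_{GF(p)}$, whose colour classes are matchings missing pairwise distinct vertices, and perform a few explicit edge transfers so that all $p-2$ resulting classes become incident to every vertex --- a finite verification. Then, for the $p-1$ labels $z_l$, I would take $\chi(e,z_l)$ to be $\chi_0$ transported by automorphisms of $K_p$ built from translations of $GF(p)$ and permutations of the colour set, chosen so that the $p-1$ versions of any single edge exhaust all $p-2$ colours; since such maps preserve the covering property, each label-slice stays covering and (i) holds, (ii) holds by construction, and symmetry is automatic; unwinding the reductions of Step~1 then yields the $K_j$ with (q0) and (k1). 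The delicate part is precisely this last step: because the slack in the count is a single edge, even producing one covering $(p-2)$-edge-colouring of $K_p$ needs care, and making the $p-1$ label-slices fit together so that every edge meets every colour depends on choosing the translation of $GF(p)$ and the permutation of the colours compatibly; giving a description that works uniformly for every odd prime power $p$ is the main obstacle. The remaining ingredients --- the decomposition over parallel lines, the splicing over the coordinates of $T$, and the translation of (q0) and (k1) into conditions (i), (ii) and symmetry --- are routine given Lemmas~\ref{q1-lem} and~\ref{q2-lem}.
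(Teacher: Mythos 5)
Your reduction to a single line $L$ with $|L|=p$ and to $T=U_1$ is sound and matches the paper's strategy (the paper reduces to the blocks $W$ of $R_0$ and reuses the partition $T_i$ of $T$ from Lemma~\ref{q2-lem}, merging the last two blocks to get $p-2$ parts $J_k$), and your counting argument correctly shows the constraint is tight: a covering $(p-2)$-edge-colouring of $K_p$ has exactly one edge of slack, so $p-3$ classes must have $(p+1)/2$ edges and one class $(p+3)/2$. But the heart of the lemma --- actually exhibiting such a colouring, and making the $z$-slices fit together --- is exactly the step you leave as a sketch and flag as ``the main obstacle,'' so the proof is not complete. The construction you propose (start from the proper $p$-edge-colouring $\{x,y\}\mapsto x+y$ of $K_{GF(p)}$ and perform ``a few explicit edge transfers,'' then transport by translations of $GF(p)$ combined with colour permutations) is never specified, and the translation part is both unnecessary and risky: moving edges around between slices makes it hard to guarantee that a \emph{fixed} edge $e$ receives every colour as $z$ varies, which is what condition (ii) (the $C_i$-bigness for $i\ge2$) requires.

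The paper closes this gap with a much simpler device that you may want to adopt. Delete one vertex: since $p-1$ is even, $K_{p-1}$ has a proper edge-colouring with $p-2$ colours (a one-factorization, e.g.\ Walecki's), so each colour class is a perfect matching and already covers all $p-1$ vertices. Now reinstate the $p$-th vertex $w_{p-1}$ and distribute its $p-1$ pendant edges among the $p-2$ colour classes, one per class except that the last class receives two; every class $\rho_j$ now covers all $p$ vertices of $K_p$ and the classes still partition $W\times W-Id$, giving the covering $(p-2)$-edge-colouring in one stroke. No graph automorphisms are needed for the cross-slice condition: the paper simply sets $K_i=\bigcup\{\rho_j\times J_k\mid i\equiv j+k\ (\mathrm{mod}\ p-2)\}$, i.e.\ cyclically relabels the colours according to which block $J_k$ of $T$ the tail of the sequence lies in. Each $t$-slice of the family $\{K_i\}$ is then the fixed covering colouring $\{\rho_j\}$ up to a cyclic shift (giving your condition (i) and, via the bigness of the $J_k$, the $C_i$-conditions for $i\ge2$), every edge meets every colour as $k$ varies (your condition (ii)), and symmetry of the $\rho_j$ gives (k1). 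With that concrete construction substituted for your sketched one, the rest of your argument goes through.
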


\begin{proof} It is enough to show existence of the $K_i$ on $W\times W\times T$ for a set $W$ of cardinality $p$, because each equivalence class of $R_0\cup Id$ has cardinality $p$. Let $W=\{ w_0, \dots, w_{p-1}\}$.
Take a proper edge-coloring%
	\footnote{An edge-coloring is called proper when no adjacent edges have the same color.} of the complete graph on vertex set $W_0=\{ w_0,\dots,w_{p-2}\}$, with $p-2$ colors; it is known that such a coloring exists, as $p-1$ is even.%
	\footnote{According to Lucas \cite{Luc--92}, the first such coloring was constructed by Walecki near the end of the 19th century. As the  $i$-th edge-class ($i=0,1,\dots,p-3$) one can take $\{(w_{i+j},w_{i-j}) \mid 1\leq j\leq (p-3)/2\} \cup \{ (w_{p-2},w_{i})\}$, with subscript addition modulo $p-2$. 
		For generalization to complete uniform hypergraphs, see the milestone paper of Baranyai \cite{Bar75}.} This coloring gives a partition of $W_0\times W_0 - Id$ into $p-2$ symmetric and irreflexive binary relations $\gamma_0, \dots,\gamma_{p-3}$ each with domain $W_0$. Define $\rho_i=\gamma_i\cup\{ (w_{p-1},w_i), ( w_i,w_{p-1})\}$ for $i<p-3$, and $\rho_{p-3}=\gamma_{p-3}\cup(\{ w_{p-1}\}\times\{ w_{p-3},w_{p-2}\})\cup(\{ w_{p-3},w_{p-2}\}\times\{ w_{p-1}\}) $ . 
Then $\rho_0,\dots,\rho_{p-3}$ is a partition of $W\times W - Id$ into $p-2$ symmetric, irreflexive relations each with domain $W$. Take the system $T_i$, $i<p-1$ from the proof of Lemma \ref{q2-lem}, and let $J_i=T_i$ for $i<p-3$ while $J_{p-3}=T_{p-3}\cup T_{p-2}$. Define now $K_i^W=\bigcup\{ \rho_j\times J_k \mid i=j+k (\mbox{mod $p-2$})\}$. The relations $K_i = \bigcup\{ K_i^W \mid W\mbox{ is a block of }R_0\}$ satisfy  the requirements.
\end{proof}

We are ready to define ${\cal D}$. Assume that $|\{ Q_i' \mid i<p-1\}|=p-2$. The case when this number is $p-3$ will be completely analogous, we will omit it.
Let
$$ At_D = \{ S_{\tau}K_i \mid \tau\in P(\alpha), i<p-2\}\cup At{\cal B}-\{ S_{\tau}(R_0\times T) \mid \tau\in P(\alpha)\} $$
and let
$$ {\cal D}=\langle D,+,-,C_i,D_{ij},P_{ij}\rangle_{i,j<\alpha}\qquad \mbox{ where } D = \{ \sum X \mid X\subseteq At_D\} .$$
One can see, as in the proof of Lemma \ref{at2-lem}, that $D$ is closed under the operations of ${\cal D}$, hence ${\cal D}\in Pse_{\alpha}$.

It  remains to exhibit an isomorphism between ${\cal C}$ and ${\cal D}$.  The following bijection between $C$ and $D$ suggests itself.  Take a bijection $h:\{ Q_i' : i<p-1\}\to \{ K_i : i<p-2\}$. There is such a bijection because there are $p-2$ many $Q_i'$'s and there are $p-2$ many $K_i$'s. Extend $h$ to $h:C \to D$ as follows: $h(S_{\tau^+}Q_i')=S_{\tau^+}h(Q_i')$ for $i<p-2$ and $\tau\in P(\alpha)$, let $h(a)=a$ for the other atoms in $At_C$, and let $h(\sum X)=\sum h(X)$ for $X\subseteq At_C$. It is easy to check that $h$ is a bijection between $C$ and $D$, and it is a homomorphism with respect to $+,-,C_i, D_{ij}$. To see $h(P^*_{ij}(x))=P_{ij}h(x)$ for all $x\in C$, it is enough to check this for all $x$ of form $S_{\tau^+}Q_i'$. We can use Lemma \ref{plem} here, as follows. 
Notice first that $S_{\tau^+}x=S_{\tau}x$ when $x$ is symmetric, for all $\tau\in P(\alpha)$.  
Then
\begin{description}
\item{} $h(P^*_{ij}S_{\tau^+}Q_q')\quad \ \ =$ \qquad by Lemma \ref{plem} 
\item $h(S_{([i,j]\circ\tau^+)^+}Q_q') \ =$ \qquad by definition of $h$ 
\item $S_{([i,j]\circ\tau^+)^+}h(Q_q') \ = $ \qquad since $h(Q_q')$ is symmetric
\item $S_{[i,j]\circ\tau^+}h(Q_q')\quad \ =$ \qquad since $S_{[i,j]} = P_{ij}$ and $S_{\sigma\circ\delta}x=S_{\sigma}S_{\delta}x$
\item $P_{ij}S_{\tau^+}h(Q_q')\quad \ \ =$ \qquad by definition of $h$
\item $P_{ij}h(S_{\tau^+}Q_q')$.
\end{description}
We have shown that ${\cal C}$ is isomorphic to the polyadic equality set algebra ${\cal D}$.

\section{Representable reducts of ${\cal A}_p$}
\label{s:6}
First we show that the cylindrification-free reduct  ${\cal A}^c=\langle A,+,-,D_{ij},P^*_{ij}\rangle_{i,j<\alpha}$ is 
isomorphic to a set algebra.  From now on, we sometimes say ``representable" to mean ``isomorphic to a set algebra". 
This time it is enough to choose a partition of $R_0\times T$ into $p-1$ symmetric but not necessarily big relations. This is not hard to do. Indeed,
let $W=\{ w_0, \dots, w_{p-1}\}$ be an arbitrary set of cardinality $p$. For $i<p-2$, let $S_i=\{ (w_0,w_{i+1}),(w_{i+1},w_0)\}$, and let $S_{p-2}=W\times W - Id -\bigcup\{ S_i \mid i<p-2\}$. For $i<p-1$ let  $H^W_i=S_i\times T$, and let $H_i=\bigcup \{ H_i^W \mid W\mbox{ is a block of }R_0\}$. Now, $H_i$, $i<p-1$ is a partition of $R_0\times T$ into $p-1$ symmetric relations. 

If we replace $Q_k$ in the construction of ${\cal A}$ with $H_k$ then $P^*_{ij}$ will agree with $P_{ij}$ because the $H_k$ are symmetric, and all the other operations except for the cylindrifications remain the same. This is the idea of showing that ${\cal A}^c$ is representable. 
In more detail: Recall that ${\cal A}$ denotes ${\cal A}_p$. In defining the set algebra ${\cal D}$ we use $U, U_i, R_i$ and ${\cal B}$ as in the definition of ${\cal A}$. Let $At_D=\{ S_{\tau}H_k \mid \tau\in P(\alpha), k<p-1\}\cup At{\cal B} - \{ S_{\tau}(R_0\times T) \mid \tau\in P(\alpha)\}$, let $D=\{ \sum X \mid X\subseteq At_D\}$ and let ${\cal D}=\langle D,+,-,D_{ij}^U,P_{ij}\rangle_{i,j<\alpha}$. 
Then ${\cal D}$ is a set algebra because it is a subalgebra of the cylindrification-free reduct of the polyadic set algebra $\langle{\cal P}(U^{\alpha}),+,-,C_i^U,D_{ij}^U,P_{ij}\rangle_{i,j<\alpha}$,  where ${\cal P}(U^{\alpha})$ denotes the set of all subsets of $U^{\alpha}$.
Let $h:At\to At_D$ be defined as $h(S_{\tau^+}Q_k)=S_{\tau^+}H_k$ and $h(a)=a$ for the other atoms. Then it is not hard to show that the additive extension of $h$ to $A$ is an isomorphism between ${\cal A}^c$ and ${\cal D}$. In showing isomorphism with respect to the transposition operations $P_{ij}^*$ and $P_{ij}$, we can use Lemma \ref{plem} as we did in the previous section.
\bigskip

The construction of a set algebra isomorphic to the diagonal-free reduct ${\cal A}^d=\langle A,+,-,C_i,P^*_{ij}\rangle_{i,j<\alpha}$ is a bit more involved. The idea is that we double each element of $U$, and in the bigger set we can find a partition of $(R_0\times T)'$ into $p-1$ symmetric big relations. We now elaborate this idea.

Let $U$ be the base set of ${\cal A}$ and let $f:U\to f(U)$ be a bijection between $U$ and a set $f(U)$ disjoint from $U$. For all $u\in U$ let $u'=\{ u,f(u)\}$ and let $U'=\bigcup\{ u' : u\in U\}=U\cup f(U)$. For all $a\subseteq U^{\alpha}$, let $a'\subseteq U'^{\alpha}$ be defined as
$$ a'=\bigcup\{ s_0'\times\dots \times s_{\alpha-1}' \mid \langle s_0,\dots,s_{\alpha-1}\rangle\in a\}.$$
Now, the function $F(a)=a'$ respects all operations of ${\cal A}$, except for the diagonals. Indeed, $F(a+b)=F(a)+F(b)$ etc, but $F(D_{ij}^U)\ne D_{ij}^{U'}$, because, e.g., $\langle u,f(u),u,\dots,u\rangle\in F(D_{01}^U) - D_{01}^{U'}$.

We turn to partitioning $(R_0\times T)'$ into $p-1$ symmetric and big relations. Let $m=(p-1)\slash 2$. Then $K_i=Q_i\cup Q_{p-2-i}$ for $i<m$ is a partitioning of $R_0\times T$ into $m$ symmetric and big relations. We now partition each $K_i$ into two symmetric and big relations. 
For $i<m$ let
\begin{description}
	\item{} $S^0 = (U\times U) \cup (f(U)\times f(U))$,    
	\item{} $S^1 = (U\times f(U)) \cup (f(U)\times U)$, 
	\item{} $K_i^0 = \{ \langle s_0,s_1,\dots,s_{\alpha-1}\rangle\in K_i' \mid (s_0,s_1)\in S^0\}$, 
	\item{} $K_i^1 = \{ \langle s_0,s_1,\dots,s_{\alpha-1}\rangle\in K_i' \mid (s_0,s_1)\in S^1\}$. 
\end{description}
Let $H_0, H_1,\dots, H_{p-2}$ be an enumeration of $K_i^0, K_i^1$, $i<m$. Then $H_0,\dots, H_{p-2}$ is a partition of $(R_0\times T)'$ into $p-1=2m$ symmetric big relations.  Define
\begin{description}
	\item{} $At_D = 
	\{ S_{\tau}H_k \mid \tau\in P(\alpha), k<p-1\}\cup F(At{\cal B}-\{ S_{\tau}(R_0\times T) \mid \tau\in P(\alpha)\} )$,
	\item ${\cal D} = \langle D,+,-,C_i^{U'}, P_{ij}\rangle_{i,j<\alpha}\qquad \mbox{ where } \quad D=\{\sum X \mid X\subseteq At_D\}$.
\end{description}
As before, ${\cal D}$ is a set algebra and we define $h:At\to At_D$ by $h(S_{\tau^+}Q_k)=S_{\tau^+}H_k$, and $h(a)=a'$ for the other elements of $At$. Then we extend $h$ to $A$ by requiring it to be additive. Now, it is straightforward to check that $h$ is an isomorphism between ${\cal A}^d$ and ${\cal D}$.

\section{Witness equations}
\label{s:eq}

In this section, we exhibit an equation $e_p$ that holds in $Pse_{\alpha}$ but does not hold in ${\cal A}_p^*$, for each odd prime power $p$.
The equation $e_p$ expresses that the properties of $R_i$, $i<p+1$  and $Q_k, k<p-1$ imply that $Q_0$ is not symmetric. We also show that $e_q$ holds in ${\cal A}_p^*$ if $q\ne p$.

We gather the properties of the $R_i$ and $Q_k$ that were used in showing that ${\cal A}_p^*$ is not representable.
The set of equations below expresses that $R_i$, $i<p+1$ is a partition of $U_0\times U_0-Id$ such that the $R_i$ are symmetric and transitive with domain $U_0$ and $R_i\circ R_j=U_0\times U_0 - (R_i\cup R_j\cup Id)$ if $i\ne j$.%
\footnote{In relation-algebraic terminology, the above equations express that the $x_i$ are the diversity atoms of a Lyndon relation algebra. For more details on Lyndon algebras see, e.g.,  \cite[Sec.\ 4.5]{hh}, \cite{Lyn61}, \cite[Ch.\ 6.30--32]{Madd}.} Recall the terms ${}_2s(i,j)x$ and $x;y$ introduced towards the end of Section \ref{s:4}.
\begin{description}
	\item{} $\sum\{ x_i : i< p+1\}=c_1x_0\cdot c_0x_0 - d_{01},\quad x_i\cdot x_j=0$,
	\item{} $x_i=c_2\dots c_{\alpha-1}x_i = {}_2s(0,1)x_i,\quad x_i;x_i\le x_i+d_{01},\quad  c_1x_i=c_1x_0,\quad c_0c_1x_i=1$,
	\item{} $ x_i;x_j = \sum\{ x_k : k< p+1, k\ne i,j\}$, \qquad  for $i,j< p+1, i\ne j$.
\end{description}
\noindent
The set of equations below expresses the properties of $Q_k$, $k<p-1$ that were used in showing that ${\cal A}_p^*$ is not representable: 
\begin{description}
	\item{} $c_1\dots c_{\alpha-1}y_0=c_1x_0,\quad y_i\le x_0, \quad y_i\cdot y_j=0$,
	\item{} $c_0y_i=c_0y_0,\quad c_1y_i=c_1y_0, \quad p_{01}y_0=y_0$,\qquad for $i,j< p-1$, $i\ne j$.
\end{description}
Let $E_p(x_0,\dots,x_p,y_0,\dots,y_{p-2})$ 
denote the union of the two finite sets of equations displayed above. In Section \ref{s:4}, in the proof of nonrepresentability of ${\cal A}_p^*$,  we showed that $E_p$ is true in ${\cal A}_p^*$ when the variables $x_0,\dots,x_p,y_0,\dots,y_{p-2}$ are evaluated to $R_0\times U^{\alpha-2},\dots, R_p\times U^{\alpha-2},Q_0,\dots,Q_{p-2}$ respectively, and we also showed, by using Lemmas \ref{l:x} and \ref{l:y}, that $E_p$ cannot be true under any evaluation of the variables in a set algebra: 
\begin{equation}
\tag{*} {\cal A}_p^*\models E_p(R_0\times U^{\alpha-2},\dots, Q_{p-2})\quad\mbox{while}\quad Pse_{\alpha}\models\forall x_0\dots\forall y_{p-2} \lnot\wedge E_p.
\end{equation}

\nin 
We are going to show that the set $E_p$ can be replaced with a single equation $e_p$ such that
$${\cal A}^*_p\not\models e_p\quad\mbox{ while }\quad Pse_{\alpha}\models e_p .$$
Indeed, by $\alpha$ being finite, there is a so-called switching term for set algebras.%
\footnote{In technical terms, this means that $Pse_{\alpha}$ is a discriminator class. For more details, see \cite[Sec.\ 9]{BuSa}, \cite[Sec.\ 2.6.4]{hh} or \cite[Sec.\ 2.7]{AGyNS}.} Namely, in set algebras we have 
$$ x=0 \mbox{ iff } c_0c_1\dots c_{\alpha-1}x=0\qquad\mbox{ and }\qquad x\ne 0 \mbox{ iff } c_0c_1\dots c_{\alpha-1}x=1 .$$
This switching term, i.e., $c_0c_1\dots c_{\alpha-1}x$, is good also for ${\cal A}_p$ because its transpo\-sition-free reduct is a set algebra. 
We now show that by the use of this switching term we can construct an equation $e_p$ such that ${\cal A}_p\models E_p[v]$ iff ${\cal A}_p\not\models e_p[v]$, for all evaluations $v:\{ x_0,\dots,y_{p-2}\}\to A$ of the variables, and the same holds for any member of $Pse_{\alpha}$ in place of ${\cal A}_p$.
Let $\oplus$ denote symmetric difference and let $\Pi$ denote the group-use of the Boolean $\cdot$, i.e., 
$x\cdot y=-(-x+-y)$, $x\oplus y=(x\cdot -y)+(-x\cdot y)$, and $\Pi\{ z_0,\dots,z_k\}=z_0\cdot...\cdot z_k$. With this notation, we define $e_p$ as
$$ \Pi\{-c_0\dots c_{\alpha-1}(\tau\oplus\sigma) \mid \tau=\sigma\in E_p\} = 0  .$$

\begin{lemma}\label{l:n}
${\cal A}^*_p\not\models e_p$  and $Pse_{\alpha}\models e_p$  for all odd prime powers $p$.
\end{lemma}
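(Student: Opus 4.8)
The plan is to reduce Lemma~\ref{l:n} to the already-proven facts collected in~$(*)$, using that the switching term $c_0c_1\dots c_{\alpha-1}x$ behaves as a discriminator both in members of $Pse_{\alpha}$ and in ${\cal A}_p$ (the latter because the transposition-free reduct of ${\cal A}_p$ is a set algebra, so the same Boolean-and-cylindrifications computation applies). The key observation is that for any algebra ${\cal M}$ in which the switching term works, and for any evaluation $v$ of the variables, one has $c_0\dots c_{\alpha-1}(\tau\oplus\sigma)[v]$ equal to the top element $1$ if $\tau[v]\ne\sigma[v]$ and equal to $0$ if $\tau[v]=\sigma[v]$. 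Hence $-c_0\dots c_{\alpha-1}(\tau\oplus\sigma)[v]$ is $1$ exactly when the equation $\tau=\sigma$ holds under $v$, and $0$ otherwise. Taking the group-product $\Pi$ over all finitely many equations in $E_p$, the left-hand side of $e_p$ evaluates to $1$ precisely when every equation of $E_p$ holds under $v$, and to $0$ otherwise. Therefore, for every evaluation $v$,
\[
{\cal M}\models (\wedge E_p)[v]\quad\Longleftrightarrow\quad {\cal M}\not\models e_p[v],
\]
and dually ${\cal M}\models e_p[v]$ iff $E_p$ fails under $v$ in ${\cal M}$.

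Granting this equivalence, the two halves of the lemma follow immediately from~$(*)$. For nonrepresentability: by $(*)$ we have ${\cal A}_p^*\models E_p(R_0\times U^{\alpha-2},\dots,Q_{p-2})$, so under that particular evaluation $e_p$ fails in ${\cal A}_p^*$, i.e.\ ${\cal A}_p^*\not\models e_p$. For validity in $Pse_{\alpha}$: by $(*)$ no evaluation of the variables in any member of $Pse_{\alpha}$ makes all of $E_p$ true, hence under every evaluation $e_p$ holds, i.e.\ $Pse_{\alpha}\models e_p$. Since ${\cal A}_p^*$ has the same transposition-free reduct as ${\cal A}_p$ (only $P_{01}$ is modified, and $P_{01}^*$ still satisfies additivity), the discriminator computation is legitimate in ${\cal A}_p^*$ as well, which is all that is needed.

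The only points requiring care—and the main (mild) obstacle—are bookkeeping: first, checking that $\tau\oplus\sigma$ is being read as the Boolean symmetric difference of the \emph{evaluated} terms, so that $\tau[v]\oplus\sigma[v]=0$ iff $\tau[v]=\sigma[v]$; second, confirming that the switching identities $x=0\Leftrightarrow c_0\dots c_{\alpha-1}x=0$ and $x\ne0\Leftrightarrow c_0\dots c_{\alpha-1}x=1$ genuinely hold in ${\cal A}_p$ and not merely in genuine set algebras. The latter is exactly the remark made just before the lemma: the transposition-free reduct $\langle A,+,-,C_i,D_{ij}\rangle_{i,j<\alpha}$ of ${\cal A}_p$ is by construction (Step~5 / Lemma~\ref{at2-lem}) a cylindric set algebra, and the switching term involves only Boolean operations and cylindrifications, so it evaluates in ${\cal A}_p$ exactly as it does in a $Cs_{\alpha}$. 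No genuinely hard argument is involved; the lemma is a purely formal consequence of $(*)$ together with the discriminator property of the switching term. I would therefore present the proof as: (1) record the discriminator identities for the switching term in ${\cal A}_p$ and in $Pse_{\alpha}$; (2) compute the value of $-c_0\dots c_{\alpha-1}(\tau\oplus\sigma)$ under an arbitrary evaluation; (3) multiply over $E_p$ to get the displayed equivalence; (4) invoke $(*)$ for each half.
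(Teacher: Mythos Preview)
Your proposal is correct and follows essentially the same approach as the paper: both arguments verify that in any algebra whose cylindric reduct is a set algebra (hence in ${\cal A}_p^*$ and in every member of $Pse_{\alpha}$) one has ${\cal M}\models e_p[v]$ iff ${\cal M}\not\models E_p[v]$, by unwinding the switching term on each factor $-c_0\dots c_{\alpha-1}(\tau\oplus\sigma)$ and then taking the Boolean product, and then both invoke $(*)$ to conclude.
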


\begin{proof} Let ${\cal A}$ be any algebra whose cylindric reduct is a set algebra. Then the above displayed switching term works in it. Let $v:\{ x_0,\dots,y_{p-2}\}\to A$ be an evaluation of the variables occurring in $e_p$ to $A$.  It is enough to show, by (*), that
$${\cal A}\models e_p[v]\quad\mbox{ iff }\quad {\cal A}\not\models E_p[v] .$$ 
Indeed, ${\cal A}\models e_p[v]$ iff ${\cal A}\models -c_0\dots c_{\alpha-1}(\tau\oplus\sigma)=0[v]$ for some $\tau=\sigma\in E_p$, because the value of each term in the product is $0$ or $1$, by the property of the switching term. Now,  ${\cal A}\models -c_0\dots c_{\alpha-1}(\tau\oplus\sigma)=0[v]$ iff ${\cal A}\models c_0\dots c_{\alpha-1}(\tau\oplus\sigma)=1[v]$, iff by the properties of the switching term,  ${\cal A}\models\tau\oplus\sigma\ne 0[v]$, iff ${\cal A}\models\tau\ne\sigma[v]$.	
\end{proof}
\noindent
Lemma \ref{l:r} below implies that $$E = \{ e_p \mid \mbox{ $p$ is an odd prime power}\}$$ is an independent set of equations, in the sense that%
\footnote{For this notion of independence see, e.g., \cite[0.1.29]{HeMoT:II}.}
no member of $E$ follows from the set of all other members of $E$. 

\begin{lemma}\label{l:r}
	${\cal A}^*_p\models e_q$ for all odd prime powers $q$ different from $p$.
\end{lemma}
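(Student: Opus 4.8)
The plan is to show that $e_q$ holds in ${\cal A}_p^*$ by exhibiting a ``defect'' in the $q$-structure that cannot be present in ${\cal A}_p^*$ for $p \ne q$, so that the antecedent system $E_q$ never holds there under any evaluation. Recall from Lemma \ref{l:n} that for any algebra whose cylindric reduct is a set algebra, ${\cal A} \models e_q[v]$ iff ${\cal A} \not\models E_q[v]$, i.e.\ iff some equation in $E_q$ fails under $v$. Since the cylindric reduct of ${\cal A}_p^*$ is a set algebra (indeed the transposition-free reduct of ${\cal A}_p$ is $\langle A,+,-,C_i,D_{ij}\rangle$, a genuine set algebra by construction), it suffices to prove that $E_q$ has no solution in ${\cal A}_p^*$ when $q \ne p$ is an odd prime power. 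So first I would reduce the lemma to: there is no evaluation $v$ of $x_0,\dots,x_q,y_0,\dots,y_{q-2}$ into $A$ making all equations of $E_q$ true in ${\cal A}_p^*$.

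Next I would analyze what a hypothetical solution $v$ would force. The first block of equations in $E_q$ says the $v(x_i)$, for $i<q+1$, are the diversity atoms of a Lyndon relation algebra of order $q+1$ sitting below $c_1v(x_0)\cdot c_0v(x_0) - d_{01}$, with $c_0c_1v(x_i)=1$. The key move is to restrict attention to the relation-algebraic reduct on the ``square'' determined by $b := c_1c_2\cdots c_{\alpha-1}v(x_0)$, viewed as a $1$-dimensional set $V_0 \subseteq U$ via the usual isomorphism between the relation-algebra reduct of a $Cs_\alpha$ and $Re(V_0)$. Because ${\cal A}_p^*$ has a set-algebra cylindric reduct, this restriction behaves exactly like in a set algebra: the $\bar S_i := v(x_i)$-restricted-to-$V_0 \cup Id$ are equivalence relations on $V_0$ satisfying the hypotheses of Lemma \ref{l:x} with $q+1$ classes-worth of relations, hence every $\bar S_0$-class $W$ has exactly $q$ elements. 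Then the second block of equations, governing the $v(y_k)$ ($k<q-1$), together with $p_{01}v(y_0)=v(y_0)$, would — by exactly the argument in the nonrepresentability proof of Section~\ref{s:4}, using Lemma~\ref{l:y} — force $|W|=q$ to be even. Since $q$ is odd, this is the contradiction, provided the whole argument of Section~\ref{s:4} goes through inside ${\cal A}_p^*$ rather than in a set algebra.

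The point where care is genuinely needed — and what I expect to be the main obstacle — is the step that used $P_{01}^*$. In Section~\ref{s:4} the symmetry of the relations $Z_i$ built from the $v(y_k)$ was deduced from $P_{01}^*Q_i = Q_i$, which is an \emph{equation of ${\cal A}_p^*$ about the specific atoms $Q_i$}, not a first-order property of an arbitrary evaluation. For a general solution $v$ of $E_q$ in ${\cal A}_p^*$ we only know $P_{01}^* v(y_0) = v(y_0)$, i.e.\ the single element $v(y_0)$ is ``$P_{01}^*$-symmetric''; but the $Z_i$ are cut out of the individual $v(y_i)$ for all $i<q-1$, and $E_q$ as I stated it only asserts $p_{01}y_0=y_0$. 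I would need to check whether $E_q$ in fact contains $p_{01}y_i = y_i$ for all $i$ (rereading the displayed list: it lists $p_{01}y_0=y_0$ — so possibly one should read the block as asserting it for each $i$, consistently with ``for $i,j<p-1$''; if not, the equation $e_q$ should be taken with the fuller system). Assuming $E_q$ does assert symmetry of each $y_i$ (which is what makes $(*)$ true and is what Section~\ref{s:4} actually used), the $Z_i$ are symmetric, Lemma~\ref{l:y} applies, and we are done. The only residual subtlety is that $P_{01}^*$ is not the \emph{set-algebraic} $P_{01}$ on all of $A$, so one cannot simply quote the set-algebra computation; but the argument of Section~\ref{s:4} only ever used $P_{01}^*$ through the relation ``$Z_i$ symmetric iff $P_{01}^*(\text{the relevant }y_i) = y_i$'', which is a definitional unwinding valid in any polyadic-type algebra, so it transfers verbatim.

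Thus the plan is: (1) invoke Lemma~\ref{l:n}'s equivalence to reduce to unsolvability of $E_q$ in ${\cal A}_p^*$; (2) from the first block, pass to the relation-algebra reduct on $V_0$, apply Lemma~\ref{l:x} to conclude each $\bar S_0$-class has exactly $q$ elements; (3) from the second block (the symmetry and bigness of the $y_i$), build the binary relations $Z_i$ on such a class $W$ exactly as in Section~\ref{s:4}, apply Lemma~\ref{l:y} to conclude $|W|$ is even; (4) observe $q$ is odd, a contradiction, so $E_q$ fails under every $v$, hence ${\cal A}_p^* \models e_q$. The main obstacle to write out carefully is verifying that every step of Section~\ref{s:4} used only first-order consequences of the equation systems (and definitional unwinding of $P_{01}^*$), not the concrete choice of atoms $R_i, Q_k$ in ${\cal A}_p^*$ — this is true but deserves an explicit remark in the proof.
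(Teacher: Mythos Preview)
Your argument has a genuine gap, and in fact it proves too much: nowhere do you use $q\ne p$. If your steps (2)--(4) worked, they would show equally that $E_p$ has no solution in ${\cal A}_p^*$, contradicting $(*)$, which exhibits the explicit solution $R_i\times U^{\alpha-2}, Q_k$.

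The failure is in step (3)/(4), precisely at the point you flagged but then dismissed. The implication ``$P_{01}(y)=y$ $\Rightarrow$ the derived binary relation $Z$ is symmetric'' is a statement about the \emph{set-theoretic} operation $P_{01}$: unwinding the definition of $Z_i=\{(a,b)\mid s(0/a,1/b)\in v(y_i)\}$ gives that $Z_i$ is symmetric iff $v(y_i)=P_{01}v(y_i)$ in the concrete sense. In ${\cal A}_p^*$ you only have $P_{01}^*v(y_i)=v(y_i)$, and $P_{01}^*$ is \emph{not} the set operation on the atoms $S_\tau Q_k$. Indeed $P_{01}^*Q_k=Q_k$ while $P_{01}Q_k=Q_{p-2-k}\ne Q_k$, so the $Q_k$ themselves witness that ``$P_{01}^*$-fixed'' does not mean ``symmetric''. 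Thus the appeal to Lemma~\ref{l:y} collapses, and the whole replay of Section~\ref{s:4} inside ${\cal A}_p^*$ cannot go through.

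The paper's proof takes a completely different route. It discards the $y$-block entirely and shows that already the transposition-free subsystem $E_q^0$ (the equations in $x_0,\dots,x_q$) has no solution in ${\cal A}_p^*$ when $q\ne p$. Since $E_q^0$ involves no $p_{ij}$, one may work in the set algebra ${\cal A}_p^s$. The argument then exploits the specific structure of ${\cal A}_p$: any $S_i$ satisfying $E_q^0$ lies in the subalgebra ${\cal B}$ generated by the $R_j\times T$, hence is fixed by all base permutations that fix the $R_j$; using $2$-homogeneity of the affine plane and a parity count on the $U_k$, one shows each $S_i'$ must contain exactly one $R_j$, forcing $q\le p$, and conversely $\sum S_i'\supseteq U_0\times U_0-Id$ forces $q\ge p$. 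This is where $q\ne p$ actually enters.
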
	

\begin{proof} Let $E_q^0$ denote the set of equations in $E_q$ that concern only $x_0,\dots,x_q$. It is enough to show, by the construction of $e_q$, that there are no $S_0,\dots,S_q\in A$ that satisfy $E_q^0$ when $q\ne p$. We can work in the set algebra ${\cal A}_p^s$	because no transposition operation occurs in $E_q^0$. Assume that $S_0,\dots, S_q$ satisfy $E_q^0$; we derive a contradiction. There are binary relations $S_i'$ such that $S_i=S_i'\times U^{\alpha-2}$, by 
$x_i=c_2\dots c_{\alpha-1}x_i\in E_q^0$. Let $S_i^+=S_i'\cup Id_V$ where $V$ is the domain of the relation $S_0'$. Then $S_i^+$, $i\le q$ satisfy the hypotheses of Lemma \ref{l:x}, by the rest of the equations in $E_q^0$. Thus, each $S_i^+$ is an equivalence relation on $V$ such that each equivalence class of $S_i^+$ has cardinality $q$, by Lemma \ref{l:x}. Also, the $S_i'$'s are pairwise disjoint.

Recall that ${\cal B}$ denotes the subalgebra of ${\cal A}_p^s$ which is generated by the $R_i\times T$, $i\le p$. Now, $S_i\in B$ by $S_i\in A$, $S_i=C_2\dots C_{\alpha-1}S_i$, and by the construction of ${\cal A}_p^s$. Therefore, all permutations of $U$ that leave all the $R_i\times T$ unchanged, leave the $S_i$ also unchanged.  Thus, if $(u,v)\in S_i'$ then $U_k\times U_j\subseteq S_i^+$ if $u\in U_k$, $v\in U_j$, and $k,j\ne 0$; because all permutations of the $U_l$ for $l\ne 0$ extend to permutations of $U$ that leave all the $R_i\times T$ unchanged. 
Hence, each equivalence class $X$ of $S_i^+$ contains $U_k$ if $u\in U_k\cap X$, $k\ne 0$. 
Thus, if the domain of $S_i^+$ is disjoint from $U_0$  then each equivalence class of $S_i^+$ has even cardinality, because $|U_l|=p-1$ for all $l\ne 0$. Since $q$ is not even, the domain of $S_i^+$ intersects $U_0$, for each $i\le q$. 

We show that $S_i'$ contains some $R_j$. 
Indeed, assume $(u,w)\in S_i'$, $u\in U_0$. Then $(v,w)\in S_i'$ for some $v\in U_0$, $v\ne u$ since there is a permutation of $U$ that leaves $U-U_0$ pointwise fixed, takes $u\in U_0$ to some $v\in U_0$, $v\ne u$ and leaves $S_i'$ fixed. Thus, $(u,v)\in S_i'$ by $S_i^+$ being symmetric and transitive.
Then $(u,v)\in R_j$ for some $j\le p$, because $\sum R_j=U_0\times U_0 - Id$, by the construction of ${\cal A}_p^s$. Recall from the proof of Lemma \ref{at2-lem} that the equivalence classes of $R_j\cup Id_{U_0}$ are parallel lines in an affine geometry, and any two distinct points on a line can be taken to any other pair of distinct points on a parallel line by a permutation of $U_0$ that fixes all the $R_j$. This implies that $R_j\subseteq S_i'$ if $R_j\cap S_i'\ne\emptyset$. On the other hand, $S_i'$ cannot contain $R_j\cup R_k$ for $j\ne k$, because of the following. By the equations in $E_q^0$, $S_i^+$ is transitive and we have $R_j\circ R_k=U_0\times U_0 - (R_j\cup R_k \cup Id)$, so $S_i'$ contains all the $R_k$ if it contains more than one of them. This is a contradiction, since the $S_j'$'s are disjoint and all of them have to contain some $R_j$. We have seen that each $S_i'$ must contain a single $R_j$, which implies that $q\le p$. 

On the other hand, $\sum S_i'\supseteq U_0\times U_0 - Id = \sum R_j$ because of the following. Equations in $E_q^0$ yield $\sum S_i=C_1S_0\cap C_0S_0 - D_{01}$, $C_1S_0=V\times U^{\alpha-1}$, and $C_0S_0=U\times V\times U^{\alpha-2}$. 
So $\sum S_i=V\times V\times U^{\alpha-2}-D_{01}$. We deduce that  (*) $\sum S_i'=V\times V-Id$. We saw that there is $j\le p$ with  $R_j\subseteq S_i'$. So, the domain of $R_j$ is a subset of the domain of $S_i'$, i.e., $U_0\subseteq V$. Now (*) yields $\sum S_i'\supseteq U_0\times U_0 - Id$, as required.
Thus $\sum S_i'\supseteq U_0\times U_0 - Id = \sum R_j$, which implies $q\ge p$. Hence $q=p$, but we chose $q\ne p$. 	 
\end{proof}

\section{On the gap between representable cylindric and polyadic algebras}
\label{s:var}

In this section, we prove some results on the lattice of varieties between 
the varieties of all representable cylindric and all representable polyadic equality algebras, and we pose a new problem suggested by these results. A class of algebras is called a {\it variety}, or an equationally definable class, if there is a set of equations such that the class of algebras consists of all algebras in which this set of equations is true.  

Recall that a polyadic-type algebra is called {\it representable polyadic algebra} iff the equational theory of $Pse_{\alpha}$ is true in it. Likewise, let us call the cylindric reduct of a polyadic-type algebra {\it representable cylindric algebra} iff the equational theory of $Cs_{\alpha}$ is true in it.
Let $RCA_{\alpha}$ and $RPEA_{\alpha}$ denote the classes of all representable cylindric and polyadic algebras, respectively. These are varieties, by their definitions. 

Let $RCPEA_{\alpha}$ denote the class of all polyadic-type algebras whose cylindric (i.e., transposition-free) reduct is representable and in which the equations (P1)--(P8) hold. Then $RCPEA_{\alpha}$ is also a variety, since it is the class of all algebras in which (P1)--(P8) together with all equations true in $Cs_{\alpha}$ are true. Theorem \ref{pea-p} in Section \ref{log-s} implies that $RCPEA_{\alpha}$ is the class of all polyadic equality algebras whose cylindric reduct is representable.

The present paper concerns with the gap between $RCA_{\alpha}$ and $RPEA_{\alpha}$. The question in \cite[p.\ 348]{Jo} asks whether the finite set of polyadic equality axioms bridges this gap, and Problem 1 of \cite{Jo} asks whether $RPEA_{\alpha}$ is not finitely axiomatizable over $RCA_{\alpha}$. The first question is equivalent to asking whether the varieties $RCPEA_{\alpha}$ and $RPEA_{\alpha}$ coincide. Theorem \ref{big-t} below states that not only they do not coincide, but the gap between them is as big as it can be, in the sense that there are continuum many varieties between them.

\begin{theorem}\label{big-t}
	There are continuum many varieties between $RCPEA_{\alpha}$ and $RPEA_{\alpha}$, for each $3\le\alpha<\omega$.
\end{theorem}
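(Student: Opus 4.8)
\textbf{Proof proposal for Theorem \ref{big-t}.}

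The plan is to produce a continuum-sized antichain of equational theories lying strictly between $RCPEA_\alpha$ and $RPEA_\alpha$, and then to take, for each subset $S$ of a fixed countable index set, the variety axiomatized by the union of the theory of $RCPEA_\alpha$ with the equations indexed by $S$. The natural raw material is already in hand: the witness equations $e_p$ from Section \ref{s:eq}, indexed by odd prime powers $p$. By Lemma \ref{l:n}, each $e_p$ holds in $Pse_\alpha$, hence in $RPEA_\alpha$, so adding any family of them to the axioms of $RCPEA_\alpha$ still yields a variety contained in $RPEA_\alpha$; and each $e_p$ fails in ${\cal A}_p^*$, whose cylindric reduct is a set algebra (Section \ref{s:6}) and which satisfies (P1)--(P8), so ${\cal A}_p^*\in RCPEA_\alpha$, witnessing that $e_p$ is \emph{not} a consequence of the axioms of $RCPEA_\alpha$. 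This gives strictness on the lower side, and strictness on the upper side follows because $RPEA_\alpha$ is not finitely --- indeed, by Theorem \ref{th:0}, not even boundedly-many-variables --- axiomatizable over $RCA_\alpha$, so no finite (nor small) subfamily of the $e_p$ can capture all of $RPEA_\alpha$.

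\medskip
\noindent
The crux is \emph{independence}: I must show that distinct subsets $S$ of the index set $P=\{p : p$ an odd prime power$\}$ yield distinct varieties. It suffices to prove that for every $p\in P$, the equation $e_p$ is not a consequence of the axioms of $RCPEA_\alpha$ together with $\{e_q : q\in P,\ q\neq p\}$. For this I would exhibit a single algebra that satisfies all of those axioms but refutes $e_p$ --- and ${\cal A}_p^*$ is exactly such an algebra: its cylindric reduct is a $Cs_\alpha$, it satisfies (P1)--(P8) by (S3) of Section \ref{s:3}, it refutes $e_p$ by Lemma \ref{l:n}, and it satisfies $e_q$ for every odd prime power $q\neq p$ by Lemma \ref{l:r}. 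Hence ${\cal A}_p^* \models RCPEA_\alpha + \{e_q : q\neq p\}$ but ${\cal A}_p^*\not\models e_p$, so the variety $\mathbf{V}_S$ axiomatized by $RCPEA_\alpha + \{e_q : q\in S\}$ strictly contains $\mathbf{V}_{S'}$ whenever $S'\subsetneq S$ but also separates incomparable sets: if $p\in S\setminus S'$ then $\mathbf{V}_{S'}\not\subseteq \mathbf{V}_S$ would require $e_p$ to follow from $S'\cup RCPEA_\alpha$, contradicting the above via ${\cal A}_p^*$ (which models $S'\subseteq\{q\neq p\}$-worth of equations). Since $P$ is countably infinite, the map $S\mapsto \mathbf{V}_S$ from $\mathcal{P}(P)$ to the interval $[RCPEA_\alpha, RPEA_\alpha]$ of the subvariety lattice is injective, giving continuum many distinct varieties in the interval.

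\medskip
\noindent
The two points needing genuine care, rather than bookkeeping, are: first, confirming that $RCPEA_\alpha \subseteq RPEA_\alpha$ and that each $\mathbf{V}_S$ sits \emph{strictly between} the two endpoints (strict above $RCPEA_\alpha$ because some $e_p$ with $p\in S$ fails there, which needs $S\neq\emptyset$; strict below $RPEA_\alpha$ because by Theorem \ref{th:0} no axiom set using boundedly many variables --- in particular no set of the $e_p$'s if they have bounded variable count, or else a direct non-finite-axiomatizability argument --- can define $RPEA_\alpha$ over $RCA_\alpha$, so in particular $\mathbf{V}_P$ itself and a fortiori every $\mathbf{V}_S$ is a proper subvariety); and second, the injectivity argument just sketched, whose only subtle ingredient is the combination of Lemmas \ref{l:n} and \ref{l:r} applied to the family $\{{\cal A}_p^*\}_{p\in P}$. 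The main obstacle I anticipate is purely the endpoint-strictness at the top: one must be sure that $\mathbf{V}_P = \bigcap_S \mathbf{V}_S$-type arguments do not accidentally recover all of $RPEA_\alpha$; this is handled by invoking Theorem \ref{th:0} (equivalently, Johnson's non-finite-axiomatizability together with the stronger variable-counting bound), since that theorem guarantees $RPEA_\alpha$ cannot be axiomatized over $RCA_\alpha$ by the $e_p$'s augmenting (P1)--(P8), so $\mathbf{V}_P \subsetneq RPEA_\alpha$ and thus every $\mathbf{V}_S$ lies properly inside. Everything else is routine verification that the $\mathbf{V}_S$ are varieties (they are, being equationally defined) and that the listed algebras satisfy the listed equations (already established in Sections \ref{s:4}--\ref{s:eq}).
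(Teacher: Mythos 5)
Your proposal is correct and follows essentially the same route as the paper: for each $X\subseteq E$ take the variety of those members of $RCPEA_\alpha$ satisfying $X$, and separate any two such varieties by an algebra ${\cal A}_p$ (with $e_p$ in one index set but not the other) using exactly Lemmas \ref{l:n} and \ref{l:r}. Two cosmetic points: adding axioms shrinks a variety, so $\mathbf{V}_S\subseteq\mathbf{V}_{S'}$ when $S'\subseteq S$ (you state the reverse, though your separation argument is unaffected), and the endpoint-strictness discussion is not needed for the statement --- continuum many distinct varieties in the closed interval already yields the theorem --- which is fortunate, because Theorem \ref{th:0} does not by itself show $\mathbf{V}_P\neq RPEA_\alpha$, since $E$ contains equations with unboundedly many variables and all kinds of extra-Boolean operations (the paper gets that strict inequality instead from Claim \ref{v-claim}).
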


\begin{proof} The set $E$ of equations defined in the previous section is denumerably infinite, since there are infinitely many odd prime powers. Thus, the power set of $E$ has cardinality continuum. For $X\subseteq E$ let $V(X)$ be the class of those $RCPEA_{\alpha}$ in which $X$ is true. Then $V(X)$ is a variety between $RCPEA_{\alpha}$ and $RPEA_{\alpha}$. Assume that $X,Y\subseteq E$ and $X\ne Y$. Then there is $e_p\in X$ such that $e_p\notin Y$, or the other way round, we may assume the first case. By $e_p\notin Y$ and Lemma \ref{l:r} we have that ${\cal A}_p\in V(Y)$. By $e_p\in X$ and Lemma \ref{l:n} we have that ${\cal A}_p\notin V(X)$. Thus $V(X)\ne V(Y)$ for all distinct $X,Y\subseteq E$.	
\end{proof}

Figure \ref{var-fig} depicts some facts about the lattice of varieties%
\footnote{Investigation of the lattice of subvarieties of a given variety is common in algebraic logic (and in universal algebra).  See, e.g., \cite{Blok} and \cite[Sec.\ 4.1]{HeMoT:II}.}
 between $RCA_{\alpha}$ and $RPEA_{\alpha}$. For notational convenience, instead of the classes of algebras, it depicts their equational theories. Note that smaller classes of algebras have bigger equational theories, so we have the equational theory of $RPEA_{\alpha}$ at the top in the picture. 

The figure represents a partially ordered set of some equational theories between those of $RCPEA_{\alpha}$ and $RPEA_{\alpha}$.  Two nodes are connected with a sequence of ``upward going" lines if and only if the lower node is a subset of the upper one, except that we do not know whether the top theory $\overline{RPEA}$ is a subset of the one below it. This is indicated by a question mark in the figure. Thus, the indicated theories are all distinct, except perhaps for the top two ones; see Problem \ref{main-prb}.

Before giving evidence for the above statements, we define the equational theories indicated in the figure.
We proceed from the bottom of the figure towards its top.

Fix $3\le\alpha<\omega$ and let $PE$ denote the set of polyadic equations (P1)--(P8) introduced in Section \ref{s:1}.

$\overline{RCA}$ denotes the equational theory of representable cylindric algebras enriched with arbitrary  transposition functions $p_{ij}$ for $i,j<\alpha$.

$\overline{RCPEA}$ denotes the equational theory of the class of polyadic equality algebras whose cylindric reducts are representable. This is the same as the equational closure of $\overline{RCA}\cup PE$.

$\overline{RCPEA+X}$ denotes the equational closure of $\overline{RCPEA}\cup X$  for $X\subseteq E$, where $E$ is the set of equations defined in Section \ref{s:eq}.


$\overline{RCPEA+P_{01}}$ denotes the equational theory of the set algebras in $RCPEA_{\alpha}$ in which also $p_{01}$ is the ``real" transposition operation $P_{01}$.

Finally, $\overline{RPEA}$ denotes the  equational theory of $RPEA_{\alpha}$.

\bsk

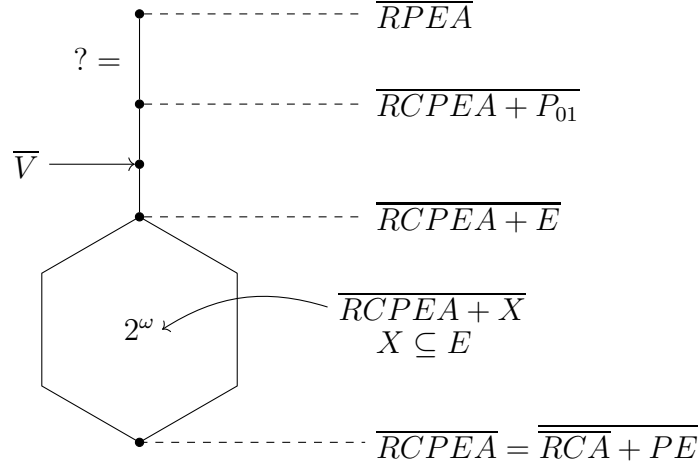
\begin{figure}[h]
	\begin{center}
		
		\begin{tikzpicture}
			\foreach \i in {1,2,3,4,5,6}
			{
				\coordinate (node-\i) at ({60*\i-30}:1.5cm) {};
				\node at (node-\i) {};
			}
			\node at (node-5) [fill,circle, draw, inner sep=1.1pt] {};
			\draw (node-1) -- (node-2) -- (node-3) -- (node-4) -- (node-5) -- (node-6) -- (node-1);
			
			\node (h0) at ({60*2-30}:1.5cm) [fill, circle, draw, inner sep=1.1pt] {}; 
			\node (h1) at (0,3) [fill, circle, draw, inner sep=1.1pt] {}; 
			\node (h2) at (0,4.2) [fill, circle, draw, inner sep=1.1pt] {}; 
			
			\draw (h0) --  (h1) -- (h2);
			\coordinate (node-5-pair) at (3,-1.5) {};
			\coordinate (h0-pair) at (3,1.5) {};
			\coordinate (h1-pair) at (3,3) {};
			\coordinate (h2-pair) at (3,4.2) {};
			
			\draw[dashed] (node-5) -- (node-5-pair);
			\draw[dashed] (h0) -- (h0-pair);
			\draw[dashed] (h1) -- (h1-pair);
			\draw[dashed] (h2) -- (h2-pair);
			
			\node[right] at (h0-pair) {$\overline{RCPEA+E}$};
			\node[right] at (h2-pair) {$\overline{RPEA}$};
			\node[right] at (h1-pair) {$\overline{RCPEA+P_{01}}$};
			\node[right] at (node-5-pair) {$\overline{RCPEA} = \overline{\overline{RCA}+PE}$};
			\coordinate (hmid-pair) at (-1.2,2.2) {};
			
			\node[left] at (-0.1,3.6) {$?=$};
			
			\node at (0,0) {$2^{\omega}$};
			\node[right] at (0.5+2,0.3+0) {$\overline{RCPEA+X}$};
			\node[right] at (0.5+2.5,0.3+-0.5) {$X\subseteq E$};
			
			\draw[->]  (0.5+2,0.3+0) to [bend right=25]  (0.3,0);
			
		\end{tikzpicture}
		\caption{Some equational theories between $\overline{RCPEA}$ and $\overline{RPEA}$}
		\label{var-fig}
	\end{center}
\end{figure}

We begin to prove the statements about these theories that we made above. We proceed from the bottom towards the top.

The equivalence  $\overline{RCPEA+X}\subseteq\overline{RCPEA+Y}$ iff $X\subseteq Y$ is shown in the proof of Theorem \ref{big-t}. The inclusion $\overline{RCPEA+E} \subseteq \overline{RCPEA+P_{01}}$ holds because of the transposition operations only $p_{01}$ occurs in $E$.

\begin{claim}\label{v-claim}
	$\overline{RCPEA+E} \ne \overline{RCPEA+P_{01}}$.
\end{claim}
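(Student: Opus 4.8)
Since $\overline{RCPEA+E}\subseteq\overline{RCPEA+P_{01}}$ has just been shown, it suffices to produce one polyadic‑type algebra $\mathcal C$ which satisfies (P1)--(P8), has a representable (indeed $Cs_\alpha$) cylindric reduct and satisfies every $e_p$ --- hence belongs to the model class of $\overline{RCPEA+E}$ --- yet fails some equation valid in every set algebra in which $p_{01}$ is the genuine transposition. Such a $\mathcal C$ cannot be one of the $\mathcal A_p$ themselves, because $\mathcal A_p\not\models e_p\in E$; instead I would build a ``cheap'' analogue of the construction of Sections~\ref{s:2}--\ref{s:3} in which the affine plane is replaced by the cyclic group $\mathbb Z_3$, so that the only odd configuration present is one of \emph{fixed} small size and the Lyndon configurations governed by the $e_p$ are simply absent.

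\textbf{The witness equation.} The proof of Lemma~\ref{l:y}, applied fibrewise, shows the following about a set algebra $\mathcal D$ with genuine $p_{01}$: if $F$ is a relation which is relation‑algebraically a fixed‑point‑free partial permutation with $F;F;F$ equal to the domain coreflexive of $F$ (so that $E:=F+{}_2s(0,1)F$ is the diversity of an equivalence relation all of whose classes have exactly $3$ elements), and if a ``big'' subrelation $m$ of $E$ is split as $m=y_0+y_1$ with $y_0\cdot y_1=0$, $c_iy_0=c_iy_1$ for all $i<\alpha$, $m\ne 0$ and $m$ meeting each class in a full fibre, then at least one of $y_0,y_1$ is not $p_{01}$‑symmetric --- otherwise one would $2$‑colour the three edges of a triangle so that each vertex meets both colours, which is impossible. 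Because $\alpha<\omega$, the cylindric reducts in question are $Cs_\alpha$'s and thus discriminator algebras, so by the switching‑term device of Section~\ref{s:eq} the conjunction of the displayed premises together with ``$p_{01}y_0=y_0\wedge p_{01}y_1=y_1$'' turning out to be unsatisfiable can be packaged into a single equation $\psi$; then $\psi\in\overline{RCPEA+P_{01}}$.

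\textbf{The model $\mathcal C$.} Fix $U_0=\mathbb Z_3$, let $F_0$ be the graph of $a\mapsto a+1$ on $U_0$, and $R_0=F_0\cup F_0^{-1}=U_0\times U_0-Id_{U_0}$. Adjoin sets $U_1,\dots,U_{\alpha-2}$ of size $2$, put $T=U_1\times\cdots\times U_{\alpha-2}$, and --- exactly as in Lemmas~\ref{q1-lem}, \ref{q2-lem}, but now splitting into $p-1=2$ parts --- choose a partition $R_0\times T=Q_0\sqcup Q_1$ with $C_i(Q_0)=C_i(Q_1)=C_i(R_0\times T)$ for all $i<\alpha$ and $P_{01}(Q_0)=Q_1$. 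Let $\mathcal A^s$ be the polyadic equality set algebra on $U=U_0\cup U_1\cup\cdots\cup U_{\alpha-2}$ generated by $\{F_0\times U^{\alpha-2},Q_0,Q_1\}$, and let $\mathcal C$ arise from $\mathcal A^s$ by replacing $P_{01}$ with the additive operation $P^*_{01}$ determined by $P^*_{01}(Q_k)=Q_k$ and $P^*_{01}(a)=P_{01}(a)$ on the remaining atoms, and by replacing the other $P_{ij}$ with the $P^*_{ij}$ defined on the $S_\tau(Q_k)$ via the $\,{}^+$‑normalisation exactly as in Section~\ref{s:3}. Now: (i) the cylindric reduct of $\mathcal C$ is that of $\mathcal A^s$, a $Cs_\alpha$, hence representable; (ii) the verification of Section~\ref{s:3} goes through verbatim and gives that $\mathcal C$ satisfies (P1)--(P8), so $\mathcal C\in RCPEA_\alpha$; (iii) the relation‑algebra reduct of $\mathcal C$ has only a handful of diversity atoms --- essentially those coming from $F_0$, $F_0^{-1}$ and the partition of $U$ into the $U_k$ --- and these are too few, and have the wrong composition table (compositions across the ``layers'' of $U$ collapse to $0$ or meet $1'$), to carry a Lyndon relation algebra of any order $\ge 3$; hence the antecedent of each $e_p$ is unsatisfiable in $\mathcal C$ and $\mathcal C\models\overline{RCPEA+E}$; (iv) $\mathcal C\not\models\psi$, witnessed by $F:=F_0\times U^{\alpha-2}$, $E:=R_0\times U^{\alpha-2}$ (whose classes are the $\mathbb Z_3$‑orbits, of size $3$), $m:=R_0\times T$ and $y_0:=Q_0$, $y_1:=Q_1$ --- all premises of $\psi$ hold, in particular $c_i(Q_0)=c_i(Q_1)$ for every $i$ and $m$ meets each class in the full diversity fibre because $R_0$ is the whole diversity on $U_0$ --- yet $P^*_{01}(Q_0)=Q_0$ and $P^*_{01}(Q_1)=Q_1$ contradict the conclusion. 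From (iii) and (iv), $\psi\notin\overline{RCPEA+E}$, while $\psi\in\overline{RCPEA+P_{01}}$ by the previous paragraph; together with the known inclusion this proves the claim. A completely analogous remark handles the case where the number of nontrivial $Q'$‑atoms drops, as in Section~\ref{s:5}.

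\textbf{Main obstacle.} The delicate point is calibrating $\mathcal C$ so that it \emph{simultaneously} (a) contains enough of the ``one $3$‑element class, two $p_{01}$‑symmetric big halves with equal cylindrifications'' configuration to refute $\psi$, and (b) stays so poor relation‑algebraically that it validates \emph{every} $e_p$ --- i.e.\ that swapping the affine plane for $\mathbb Z_3$ genuinely evades all the Lyndon configurations the $e_p$ quantify over. Pinning down the exact antecedent of $\psi$ so that it is literally met by the $F_0,Q_0,Q_1$ of $\mathcal C$ (fibres, bigness relative to $R_0\times T$ rather than to $U^\alpha$) and re‑running the Section~\ref{s:3} bookkeeping for the perturbed $P^*_{ij}$ are routine but lengthy; the genuinely conceptual step is item~(iii).
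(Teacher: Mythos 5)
Your proposal follows the same overall strategy as the paper's proof: both exhibit a ``degenerate'' variant of the $\mathcal A_p$ construction in which $|U_0|$ is a small fixed odd number pinned down by a non-Lyndon gadget, so that every $e_p$ holds vacuously (no Lyndon configuration of any order lives in the algebra) while a new switching-term equation, valid whenever $p_{01}$ is the genuine transposition, fails on the modified $P^*_{01}$. The difference is the gadget: the paper (deferring to the unpublished \cite{APEA87}) takes $|U_0|=5$ and uses the pentagonal relation algebra $\mathcal R_5$, all of whose representations have a five-element base, whereas you take $|U_0|=3$ and fix the cardinality by asserting that the diversity of $U_0$ is $F+F^{\smile}$ for a functional, injective, fixed-point-free $F$ with $F;F;F$ equal to the domain coreflexive. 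This choice is not innocent: Section \ref{const-s} records that the earlier $|U_0|=3$ construction of \cite{AN84}, which counts neighbours in the \emph{symmetric} relation, works only for $\alpha\ge 4$, and that is exactly why the published witness switches to the pentagonal algebra for $\alpha=3$. Your encoding sidesteps this by working with the asymmetric successor relation $F_0$ rather than $R_0$: functionality, injectivity and $F;F;F=\mathrm{dom}\,F$ are all expressible by the relation-algebra reduct terms of a $Cs_\alpha$ with $\alpha\ge 3$, and in a genuine set algebra they force every orbit to have size exactly $3$, so Lemma \ref{l:y} applies fibrewise; this is the genuinely delicate point and should be verified explicitly rather than left as ``routine''. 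Likewise your item (iii) needs the permutation-invariance argument of Lemma \ref{l:r} rerun for $\mathcal C$: any symmetric irreflexive candidate whose domain meets $U_0$ must contain all of $R_0=F_0+F_0^{\smile}$ (the rotation group of $\mathbb Z_3$ is transitive on $F_0$), and one whose domain avoids $U_0$ has classes of even size since $|U_k|=2$, so $q+1\ge 4$ disjoint such relations cannot exist. With these points filled in your argument is correct, and it has the virtue of being self-contained where the paper cites an unpublished manuscript.
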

\begin{proof}
	We exhibit an algebra ${\cal A}\models\overline{RCPEA+E}$ such that ${\cal A}\not\models\overline{RCPEA+P_{01}}$.
	It is a variation of ${\cal A}_p$, the only difference is that we use different tools for making sure that $|U_0|$ is odd up to isomorphism. Namely, $U_0$ has cardinality 5 in ${\cal A}$ and, instead of a Lyndon algebra, we use the diversity atoms of the so-called pentagonal algebra to ensure this. For more details see Section \ref{const-s}, the exact definition of ${\cal A}$ can be found in \cite{APEA87}. In this algebra, $p_{01}$ is not representable for the same reason why ${\cal A}_p^*$ is not representable, and an equation analogous to $e_p$ witnesses this. Thus, ${\cal A}\not\models\overline{RCPEA+P_{01}}$.
	Finally, ${\cal A}\in RCPEA_{\alpha}$ because its cylindric reduct is representable by its construction and ${\cal A}\models PE$ is checked in \cite{APEA87}.  Also, ${\cal A}\models E$ because in ${\cal A}$ there are no elements that could form the diversity atoms of a Lyndon algebra, see the proof of Lemma \ref{l:r}. Hence, ${\cal A}\models\overline{RCPEA+E}$.
	\end{proof}

 We say that in ${\cal A}$ the transposition operation $p_{kl}$ is {\it representable} iff ${\cal A}$ is isomorphic to an algebra in which all the cylindric operations $+,-,c_i,d_{ij}$ for $i,j<\alpha$ as well as $p_{kl}$ are set operations on a set $U$.  We  have seen that in the algebras ${\cal A}_p$, defined in Section \ref{s:3}, $p_{01}$ is not representable. 
 In fact, none of the nontrivial transposition operations is representable in ${\cal A}_p$, where we call $p_{kl}$ {\it nontrivial} if $k\ne l$. This is so necessarily by the following theorem.%
 \footnote{The present proof of Theorem \ref{all-thm} was suggested by one of the referees, the original proof was metalogical and longer.} 

\begin{theorem}\label{all-thm} 
Either all nontrivial transposition operations are representable in an ${\cal A}\models$ {\rm (P1)--(P6)} or none of them is.
\end{theorem}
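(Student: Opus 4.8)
The plan is to exploit the group-theoretic relations (P5)--(P6) to show that representability of any one nontrivial $p_{kl}$ forces representability of all of them, by conjugating the given representation with a suitable permutation of the base set. Suppose ${\cal A}\models$ (P1)--(P6) and that $p_{kl}$ is representable for some $k\neq l$; fix an isomorphism $g:{\cal A}\to{\cal A}'$ where ${\cal A}'$ has base set $U$ and the operations $+,-,c_i,d_{ij}$ (all $i,j<\alpha$) and $p_{kl}$ are the genuine set operations $C_i^U,D_{ij}^U,P_{kl}^U$. I want to conclude that for an arbitrary nontrivial pair $m\neq n$, the operation $p_{mn}$ is also representable. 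The key point is that $[k,l]$ and $[m,n]$ are conjugate in the symmetric group $P(\alpha)$: there is $\pi\in P(\alpha)$ with $\pi\circ[k,l]\circ\pi^{-1}=[m,n]$.

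The central step is to transport the representation along $\pi$. Define $h:{\cal A}'\to{\cal B}$ by $h(X)=S_{\pi}(X)=\{s\in U^{\alpha}\mid s\circ\pi\in X\}$ on the universe of ${\cal A}'$; since ${\cal A}'$ is a set algebra, $S_{\pi}$ is a Boolean isomorphism onto its image, and by the standard computations in (S1)--(S3) one checks that $S_{\pi}$ intertwines the cylindric operations with themselves up to reindexing: $S_{\pi}C_i^U=C_{\pi^{-1}(i)}^U S_{\pi}$, $S_{\pi}D_{ij}^U=D_{\pi^{-1}(i)\pi^{-1}(j)}^U$, and $S_{\pi}P_{kl}^U S_{\pi}^{-1}=P_{\pi(k)\pi(l)}^U=P_{mn}^U$ for a suitable choice of $\pi$. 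Taking ${\cal B}$ to be the algebra with base set $U$ whose operations are the genuine set operations named so that $h\circ g$ becomes a homomorphism, we get that in ${\cal B}$ it is $p_{mn}$ that is realized as the real set operation $P_{mn}^U$, while $+,-,c_i,d_{ij}$ remain set operations. Thus $p_{mn}$ is representable in ${\cal A}\cong{\cal B}$.

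A small amount of care is needed about bookkeeping: what we must actually verify is that the abstract-algebra identities (P1)--(P6) are exactly what guarantees that ``$p_{kl}$ acts like a transposition operation'' is preserved under conjugation, i.e.\ that the image algebra's designated $p_{mn}$-operation is forced (via (P1)--(P6) together with the fact that $+,-,c_i,d_{ij}$ are honest set operations and $p_{kl}$ is the honest $P_{kl}^U$) to coincide with $P_{mn}^U$. Concretely, one uses that in a set algebra the relations (P1)--(P6) pin down $P_{mn}$ uniquely once the cylindric structure and a single $P_{kl}$ are fixed as genuine set operations and the pair $m,n$ is obtained from $k,l$ by the appropriate relabeling $\pi$; this is essentially J\'onsson's presentation of the finite symmetric group cited after (P7).

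The main obstacle I expect is not in the conjugation argument itself, which is routine once set up, but in making precise the claim that a set algebra satisfying (P1)--(P6), in which the cylindric reduct operations and one nontrivial $p_{kl}$ are the literal set operations, has all its other $p_{ij}$ also equal to the literal set operations (or at least that the relabeled one does). One has to argue that $P_{mn}^U$ is the unique set operation that, together with $P_{kl}^U$ and the diagonals and cylindrifications, satisfies the instances of (P5)--(P6) relating $p_{mn}$ to $p_{kl}$; equivalently, that $p_{mn}$ is term-definable from $p_{kl}$ and the cylindric operations modulo (P1)--(P6). That definability fact, which is where the hypothesis (P1)--(P6) rather than (P1)--(P8) is really used, is the crux; the rest is packaging.
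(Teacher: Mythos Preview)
Your conjugation idea is on the right track, but the map $h=S_\pi$ does not do what you need. You correctly compute that $S_\pi$ intertwines $C_i^U$ with $C_{\pi(i)}^U$ (your formula $C_{\pi^{-1}(i)}^U$ has the index inverted, but set that aside). The problem is that this \emph{reindexes} the cylindrifications and diagonals: in the image algebra ${\cal B}$ the operation carrying the name $c_i$ becomes $C_{\pi(i)}^U$, not $C_i^U$, so ${\cal B}$ is not a representation in the required sense. Renaming does not help, because the isomorphism must respect the fixed similarity type. Your last paragraph then tries to patch this by asserting that $p_{mn}$ is term-definable from $p_{kl}$ and the cylindric operations modulo (P1)--(P6); this is false. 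J\'onsson's relations present the symmetric group in terms of \emph{all} transpositions, but a single transposition $[k,l]$ generates only $\{Id,[k,l]\}$, and (P1)--(P6) give no way to manufacture $p_{mn}$ out of $p_{kl}$ and cylindric operations alone.

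The paper's proof repairs exactly this defect. Instead of $S_\pi$, it uses the map $h_{ij}(a)=P_{ij}\,p_{ij}\,a$, composing the \emph{concrete} set-theoretic $P_{ij}$ with the \emph{abstract} operation $p_{ij}$ of ${\cal A}$. Because (P3) holds both in ${\cal A}$ (for $p_{ij}$) and in the full set algebra (for $P_{ij}$), the two index shifts cancel and $h_{ij}(c_q a)=C_q\,h_{ij}(a)$: the cylindric structure is preserved with its original indexing. Meanwhile, using (P5) twice, one checks that $h_{ij}$ carries $p_{\tau k\,\tau l}$ to the concrete $P_{\tau k\,\tau l}$ where $\tau=[i,j]$; so the ``represented'' transposition moves from $(k,l)$ to $(\tau k,\tau l)$. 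Iterating along a decomposition of a permutation $\sigma$ with $\sigma(k)=m$, $\sigma(l)=n$ yields representability of $p_{mn}$. The missing idea in your attempt is precisely this twist: conjugate not by the concrete $S_\pi$ alone, but by the composite of the concrete and abstract transpositions, so that the cylindric reduct stays fixed while the represented $p$-operation slides to the desired index pair.
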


\begin{proof}
Let $U$ be a set. We say that $p_{kl}$ is represented in ${\cal A}$ with base set $U$ if in ${\cal A}$ all the cylindric operations as well as $p_{kl}$ are set operations on $U$.  In particular, $a\subseteq{}^{\alpha}U$ for all $a\in A$. We will often omit reference to the base set $U$. 

Let $k,l,m,n<\alpha$ and assume that $p_{kl}$ is represented in ${\cal A}$. We prove that $p_{mn}$ is represented in an isomorphic copy of ${\cal A}$, on the same base set. Let ${\cal C}$ denote the full set algebra with base $U$, i.e., ${\cal C}=\langle {\cal P}({}^\alpha U), +,-,C_i,D_{ij}, P_{ij}\rangle_{i,j<\alpha}$. Note that {\rm (P1)--(P6)} are true in ${\cal C}$. 

For $i,j<\alpha$ define $h_{ij}:A\to  {\cal P}({}^\alpha U)$ by $h_{ij}(a)=P_{ij}p_{ij}a$ and let $\tau=[i,j]$.  We will show that $p_{\tau k\tau l}$ is represented in $h_{ij}({\cal A})$, the image of ${\cal A}$ under $h_{ij}$.  
Indeed, $h_{ij}$ is injective because (P6) holds both in ${\cal A}$ and in ${\cal C}$. It is a homomorphism with respect to the cylindric operations by (P1)--(P4). For example, let $a\in A$ and $q<\alpha$, then 
 
      \bigskip
      \begin{tabular}{lcl}
      $h_{ij}(c_qa)$ & = &by the  definition of $h_{ij}$\\
      $P_{ij}p_{ij}c_qa$& = & by (P3) for ${\cal A}$\\
      $P_{ij}c_{\tau q}p_{ij}a$& = &by $c_{\tau q}=C_{\tau q}$\\
      $P_{ij}C_{\tau q}p_{ij}a$& =& by (P3) for ${\cal C}$\\
      $C_{\tau\tau q}P_{ij}p_{ij}a$& =& by $\tau\tau q=q$ and the definition of $h_{ij}$\\
      $C_{q}h_{ij}(a)$.
      \end{tabular}
      \bigskip
      
      \noindent
      Similarly,  $h_{ij}$ takes $p_{\tau k\tau l}$ to $P_{\tau k\tau l}$ by (P5):
      \bigskip
      
      \begin{tabular}{lcl}
      $h_{ij}(p_{\tau k\tau l}a)$& =& \mbox{by the definition of $h_{ij}$}\\
      $P_{ij}p_{ij}p_{\tau k\tau l}a$& = & \mbox{by (P5) for ${\cal A}$}\\
      $P_{ij}p_{\tau\tau k\tau\tau l}p_{ij}a$& = & \mbox{by $\tau\tau=Id$ and $p_{kl}=P_{kl}$}\\
      $P_{ij}P_{kl}p_{ij}a $&= & \mbox{by (P5) for ${\cal C}$}\\
      $P_{\tau k\tau l}P_{ij}p_{ij}a$& = & \mbox{by the definition of $h_{ij}$}\\
      $P_{\tau k\tau l}h_{ij}(a)$.
      \end{tabular}
      \bigskip
      
 \noindent
 Thus, $p_{\tau k\tau l}$ is represented in $h_{ij}({\cal A})$. Now, let $\sigma\in P(\alpha)$ be a permutation that takes $k,l$ to $m,n$, i.e., $\sigma(k)=m$ and $\sigma(l)=n$.  Let $\sigma$ be the composition of $[i_0,j_0], \dots, [i_q,j_q]$ and let $h$ be the composition of $h_{i_0,j_0}$, \dots, $h_{i_q,j_q}$. Then $p_{\sigma k\sigma l}$, which is $p_{mn}$,  is represented in $h({\cal A})$ over $U$.
\end{proof}

Theorem \ref{all-thm} suggests an interesting new problem about the interplay between the various transposition operations.
The problem asks whether there is a nontrivial connection between the various transposition operations which is not implied by (P1)--(P8). 

\begin{problem} \label{main-prb} Let $3\le \alpha<\omega$.
If  one, or equivalently each, of the nontrivial transposition operations in ${\cal A}\in RCPEA_{\alpha}$ is representable, is then ${\cal A}$ representable?
\end{problem}

\section{On the construction}\label{const-s}
Let $3\le\alpha<\omega$. Our construction for solving Problem C evolved through several stages in \cite{AN84, APEA87, A87b} and \cite{ATu88}. 
 The unpublished notes \cite{AN84, APEA87, A87b} each contain finite nonrepresentable $RCPEA_{\alpha}$'s. The one in \cite{AN84} works only for $\alpha\ge 4$, the one in \cite{APEA87} works also for $\alpha=3$. These show that the finitely many polyadic axioms do not axiomatize $RPEA_{\alpha}$ over $RCA_{\alpha}$.  Infinitely many algebras are presented in \cite{A87b} for $\alpha\ge 4$, a nontrivial ultraproduct of them is representable. This proves nonfinite representability of $RCPEA_{\alpha}$ over $RCA_{\alpha}$ for $\alpha\ge 4$. All the algebras in \cite{AN84, APEA87, A87b} are one-generated. The construction in the present paper supersedes the one in \cite{A87b} in that it works also for $\alpha=3$ and that it allows $n$-generated subalgebras to be  representable, for all $n\in\omega$. This makes it possible to formulate nonfinite axiomatizability in a stronger form. The announcement in \cite{ATu88} is based on the construction in the present paper.

The novelty in the above constructions is to code a simple combinatorial fact of symmetric relations as an abstract property of $P_{01}$ such that the ``code" does not hold in ${RCPEA}_\alpha$. This combinatorial fact is 
\begin{description}
\item[(**)] A symmetric bijection on a set of odd cardinality has a fixed point.
\end{description}
The set in this paper is $U_0$, and a 
fixed-point-free relation $f$ is defined indirectly via a splitting $Q_0,\dots,Q_k$ of an atom $R_0\times T$, where $R_0\subseteq U_0\times U_0$. In this paper we use Lyndon relation algebras arising from finite affine planes to fix, up to isomorphism, the cardinality of $U_0$ to be $p^2$ for prime powers $p\ge 3$, hence odd. 
 When we split $R_0\times T$ into $p-1$ parts, $f$ becomes a bijection, therefore it cannot be symmetric. However, $Q_0$ is  a fixed point of the modified transposition operation $P_{01}^*$, %
 thus $P_{01}^*$ cannot be the ``real" transposition operation. 
 When we split $R_0\times T$ into fewer parts, $f$ does not become a bijection, and so there is no clash with the odd cardinality of its domain together with representability  of $P^*_{01}$. This gives a nonrepresentable algebra with representable small subalgebras.%
\footnote{For details, see Sections \ref{s:4}, \ref{s:5}. Let $f$ be defined as $f=\{ (u,v) \mid s(0\slash u, 1\slash v)\in Q_0 \}$, where $s\in R_0\times T$ is arbitrarily chosen. In Section \ref{s:4}, where we show applicability of Lemma \ref{l:y}, we prove that $f$ is a bijection of $U_0$ if $k=p-1$, and the argument works in any set algebra which is isomorphic to ${\cal A}$.}

Using nonrepresentable algebras with representable $n$-generated subalgebras, for each finite $n$, as far as we know, occurs first in Bjarni J\'onsson's paper \cite[Thm.3.5.6]{J91} where he proves that an equational axiom set for the class of representable relation algebras has to contain infinitely many variables.%
\footnote{He took Lyndon relation algebras for this purpose. We also apply Lyndon relation algebras, but for a different purpose: in our construction they are used for ensuring that $U_0$ has odd cardinality.}
This technique is applied in \cite[Thm.1]{AAPAL97} to prove the analogous result for representable cylindric algebras, and in the present paper the technique is used in the context of polyadic algebras. 
Splitting atoms into several ``copies" is a traditional rather fruitful construction in algebraic logic. See, for example, \cite{HeMoT:II, hh, Saysplit, Saybrief}.

The constructions in \cite{AN84, APEA87, A87b} differ from the one in the present paper in the tools for making sure that $|U_0|$ is odd up to isomorphism.
In \cite{AN84}, $|U_0|=3$ and we need $\alpha\ge 4$  to have this up to isomorphism. 
In \cite{APEA87}, $|U_0|=5$ and we use the so-called pentagonal relation algebra for ensuring the same property. 
In more detail, for finite $n$, let ${\cal R}_n$ be the relation algebra generated, via relation composition, by the symmetric successor function $S_n=\{(i,i+k) \mid i<n, k\in\{ 1,-1\}\}$ where $+$ and $-$ are understood modulo $n$. The pentagonal relation algebra is ${\cal R}_5$ and it happens that all its representations have base set of cardinality 5.%
\footnote{This is well known, see, e.g., \cite[56.14]{Madd}, \cite[p.139]{hh}.}
In \cite{A87b}, $U_0$ can have any odd cardinality $n$; we use ${\cal R}_n$ to ensure this up to isomorphism as follows. The atoms of ${\cal R}_n$ are just $S_n^q=\{(i,i+k) \mid i<n, k\in\{ q,-q\}\}$ for $q<m$ where $m=\frac{n-1}{2}+1$ and we
formulate that in $S_n^q$, $0<q<m$ each $u$ is connected to exactly two other elements. We need $\alpha\ge 4$ to be able to do this by using the diagonal constants and cylindrifications. 
We note, by hindsight, that the construction in \cite{AN84} is the special case of $n=3$, i.e., it uses ${\cal R}_3$.

In all these preliminary constructions, we split the atom $S_0\times T$ into two parts, as opposed to the construction in this paper, where we split $R_0\times T$ into $p-1$ parts. This is why these algebras are all one-generated. Since they have a representable ultraproduct, they are suitable for proving that even the one-variable equations true in ${RPEA}_\alpha$ cannot be axiomatized over $RCA_\alpha$ with finitely many equations. For analogous proofs we refer to \cite[Thm.s 3,4]{Madd89}, where it is proved that the set of one-variable equations true in $RCA_\alpha$ is not finitely axiomatizable, and the same is true for relation algebras. 

J\'onsson's technique of using nonrepresentable algebras with large representable subalgebras also proves non-axiomatizability with universal prenex formulas containing only a bounded number of variables (see, e.g., \cite[p.520]{hh}), but it leaves open the possibility of axiomatizability of $RPEA_{\alpha}$ over $RCA_{\alpha}$ with a set of first-order sentences that contains only finitely many variables.

\begin{problem}\label{folax-p}
	Is there a set $\Sigma$ of first-order formulas in the language of $RPEA_\alpha$ that contains only finitely many variables and $RPEA_\alpha$ is the class of all models of $\Sigma$ that have representable cylindric reduct?
\end{problem}
\noindent Since both $RPEA_\alpha$ and $RCA_\alpha$ are axiomatizable by equations, the above problem is equivalent to finding a set $\Sigma$ of first-order formulas containing only finitely many variables and true of $RPEA_{\alpha}$ such that all equations valid in $RPEA_\alpha$ can be derived from $\Sigma$ together with the equational theory of $RCA_\alpha$.
\noindent The analogous problems for axiomatizing the classes of representable relation, cylindric, and polyadic equality algebras are open as of today, see Problem 17.14 and Problem 17.13 in \cite{hh}. A non-axiomatizability result beyond universal prenex formulas is contained in \cite{EH}.

\section{On generalizing to infinite dimensions}\label{inf-s}
In this section, we briefly describe what the axiomatizability situation is when it is not the case that $3\le\alpha<\omega$. 
For $\alpha<3$, axiomatizability problems in algebraic logic usually yield positive answers, and this is the case also for $RPEA_{\alpha}$: it is axiomatizable by a finite set of equations, see \cite[p.242]{HeMoT:II}. For infinite $\alpha$, the situation is more intricate. 
The definitions of $\Fm^+\slashm\fequiv$, $RCA_{\alpha}$, $RPEA_{\alpha}$ etc.\ are straightforward to generalize to infinite $\alpha$. However, the algebras in $RPEA_{\alpha}$ thus obtained are called {\it quasi-polyadic algebras} and their class is denoted by $RQEA_{\alpha}$. The reason is that polyadic algebras were defined to have a more complex index set for the cylindric and substitution operations by Paul Halmos \cite{Ha}. For finite $\alpha$, this does not make much difference, because the originally defined class is term-definitionally equivalent to the one used in this paper, but for infinite $\alpha$ there is a big gap between polyadic algebras and quasi-polyadic algebras. For details, see \cite{SaTho}.  The proofs given here for Proposition \ref{pse-prop} and Theorems \ref{poz-thm}, \ref{pea-p} in the next section generalize to infinite $\alpha$ almost verbatim.

Assume $\alpha\ge\omega$ from now on, but only in this section. 
The problems of axiomatizability by finitely many equations have trivially negative answers in this case, because our algebras have an infinity of operations. 
However, (P1)--(P8) are meaningful in this case, too: they are eight schemes that define an infinity of equations, and the question whether they axiomatize the transposition operations is meaningful.
It is proved in \cite{ANSay} that (P1)--(P8)  do not axiomatize the transposition operations over the cylindric ones.%
\footnote{The construction used in \cite{ANSay} is the one in \cite{AN84} generalized to infinite dimensions.} 
Monk \cite{Mo} proved that $RCA_{\alpha}$ cannot be axiomatized by finitely many equation schemes like (P1)--(P8). For this, he had to define what an ``equation scheme like (P1)--(P8)" means, they are called ``Monk schemata" in \cite{SaTho}. In respect to Monk schemata, the situation for quasi-polyadic algebras seems to be analogous to the case of finite $\alpha$: Ildik\'o Sain and Richard Joseph Thompson give several proof ideas for non-axiomatizability of $RQEA_{\alpha}$ by finitely many Monk schemata, see \cite[Thm.2.1(ii)]{SaTho}, and Tarek Sayed  Ahmed gives a proof outline for non-axiomatizability of $RQEA_{\alpha}$ over $RCA_{\alpha}$ by finitely many Monk schemata, see \cite[Thm.3.6]{Sayyetsome}.%
\footnote{We believe that the proof idea using ultraproducts in \cite{SaTho} can be elaborated to work fine, and the outline in \cite{Sayyetsome} also can be made into a proof if one uses the constructions in \cite{A87b}.}
Non-axiomatizability with finitely many variables is a stronger property than non-axiomatizability by finitely many Monk schemata. It is proved in \cite[Thm.1]{AAPAL97} that $RCA_{\alpha}$ cannot be axiomatized by a set of equations containing only finitely many variables. The situation for quasi-polyadic algebras is not known:

\begin{problem}\label{qpea-prob} Assume $\alpha\ge\omega$. 
	\begin{description}
		\item[(i)] Can $RQEA_{\alpha}$ be axiomatized by a set of equations containing only finitely many variables?
		\item[(ii)] Can $RQEA_{\alpha}$ be axiomatized over $RCA_{\alpha}$ by a set of equations containing only finitely many variables?
	\end{description}	
\end{problem}
\noindent We note that Sain and Thompson give in \cite{SaTho} a proof idea for a negative answer to Problem \ref{qpea-prob}(i), but that idea does not work because the algebras they use have two-generated nonrepresentable subalgebras. This is so because one can use the transposition operations 
to code any finitely many ``split" elements in that construction into one element such that the finitely many can be regained from that one element and a split element.%
\footnote{Namely,  $Q_i=c_0\dots c_nQ_0\cdot p_{0i}\sum\{ p_{0j}Q_j : j\le n\}$, for $i\le n$.}
This was discovered by Sayed Ahmed who tried to overcome this difficulty  by using another method.
However, his proof does not work, either.%
\footnote{\cite[Thm.2.3]{Sayyetsome} states a positive answer to Problem \ref{qpea-prob}(i), but the proof contains an error in the last but one paragraph on p.341: there is no isomorphism the existence of which is stated there.}

\section{Complexity of proof systems for finite variable logic; a transparent polyadic axiom set}
\label{s:7}
\label{log-s}

In this closing section, the first two theorems are about (P1)--(P8), in particular they are utilized to present a new transparent axiom set for polyadic equality algebras of finite dimension. Further, 
a theorem about complexity of complete derivation systems for the finite-variable fragment of first-order logic is proved. 
We assume  $3\le\alpha<\omega$ in this section, but  almost everything in it generalizes to infinite $\alpha$.

Recall from Section \ref{s:1} that ${\Fm}^+$ denotes the formula-algebra of $\alpha$-variable first-order logic together with the transposition operations $p_{ij}$ as concrete functions on it. For the next theorem to be true, no assumption on relation symbols is needed.

\begin{theorem}\label{poz-thm}
	An equation is true in $\Fm^+$ if and only if it is a logical consequence of {\rm (P1)--(P7)}.
\end{theorem}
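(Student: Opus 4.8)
Since (P1)--(P7) are visibly true in $\Fm^+$ (they just record how transposition of variables acts on a formula), the ``only if'' direction is soundness and nothing new is needed. The plan for the converse, which is the substance, is to exhibit inside $\Fm^+$ a copy of the free algebra of the variety $V=\mathrm{Mod}(\mathrm{(P1)}\text{--}\mathrm{(P7)})$ and then apply Birkhoff's theorem: an equation true in a free algebra of $V$ on enough generators is an equational consequence of (P1)--(P7).

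First I would establish a normal form. Call a polyadic-type term \emph{reduced} if every occurrence of a $p_{ij}$ in it lies inside a block $p_{i_1j_1}\cdots p_{i_rj_r}x$ applied directly to a variable. Using (P1)--(P4) one rewrites any term to a provably equal reduced one: (P1)--(P3) push a $p_{ij}$ below $+,-,c_k$, and (P4) absorbs $p_{ij}d_{kl}$ into the constant $d_{[i,j]k,\,[i,j]l}$; iterating terminates. Moreover, since (P5)--(P7) are Jónsson's defining relations for the group $P(\alpha)$ of permutations (here finite, so these relations present it completely), a block $p_{i_1j_1}\cdots p_{i_rj_r}$ is, modulo (P1)--(P7), determined by the permutation $\sigma=[i_1,j_1]\circ\cdots\circ[i_r,j_r]\in P(\alpha)$; pick for each $\sigma$ a representative block and write the corresponding operation $\hat\sigma$. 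Thus every term is provably equal to a $\{+,-,c_i,d_{ij}\}$-term over symbols $\hat\sigma x$ ($\sigma\in P(\alpha)$, $x$ a variable), and applying a further $p_{ij}$ to such a term again reduces it to this shape, replacing $\hat\sigma x$ by $\widehat{[i,j]\circ\sigma}\,x$ and $d_{kl}$ by $d_{[i,j]k,\,[i,j]l}$.

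Now fix countably many distinct $\alpha$-ary relation symbols $R_k$ $(k<\omega)$ and put $\varphi_k=R_k(v_0,\dots,v_{\alpha-1})$; let $\mathcal F$ be the subalgebra of $\Fm^+$ generated by $\{\varphi_k:k<\omega\}$. The claim is that $\mathcal F$ is the free algebra of $V$ on $\{\varphi_k\}$. Here the point is that two freeness phenomena fit together: transposing variables in $\varphi_k$ gives $p_{i_1j_1}\cdots p_{i_rj_r}\varphi_k=R_k(v_{\sigma 0},\dots,v_{\sigma(\alpha-1)})$ with $\sigma$ as above, and these atomic formulas are pairwise distinct as $(\sigma,k)$ varies; since $\Fm$ is absolutely free over its atomic formulas as a $\{+,-,c_i,d_{ij}\}$-algebra (formulas are strings, with unique readability), the subalgebra generated by the set $Y=\{R_k(v_{\sigma 0},\dots,v_{\sigma(\alpha-1)}):\sigma\in P(\alpha),\ k<\omega\}$ is absolutely free over $Y$ in that signature; and by the normal form $\mathcal F$ coincides, as a $\{+,-,c_i,d_{ij}\}$-algebra, with that subalgebra. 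To check the universal mapping property, let $\mathcal B\models$ (P1)--(P7) and $f\colon\{\varphi_k\}\to B$. Because (P5)--(P7) present $P(\alpha)$ and hold in $\mathcal B$, the rule $R_k(v_{\sigma 0},\dots)\mapsto(\text{any $p$-word for }\sigma)^{\mathcal B}(f\varphi_k)$ is well defined on $Y$; extend it by absolute freeness to a $\{+,-,c_i,d_{ij}\}$-homomorphism $\bar g\colon\mathcal F\to\mathcal B$. Finally verify $\bar g(p_{ij}a)=p_{ij}^{\mathcal B}\bar g(a)$ by induction on the $\{+,-,c_i,d_{ij}\}$-term of $a$: on $Y$ it holds by (P5)--(P7) in $\mathcal B$, on $d_{kl}$ by (P4), and the inductive steps use (P1)--(P3) in both $\mathcal F$ and $\mathcal B$. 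Then $\bar g$ is a polyadic-type homomorphism extending $f$, unique since $\{\varphi_k\}$ generates $\mathcal F$; hence $\mathcal F\cong F_V(\omega)$.

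To conclude: if $s=t$ (in variables $x_1,\dots,x_n$) is true in $\Fm^+$, then evaluating at $x_i\mapsto\varphi_i$ shows it holds in $\mathcal F\cong F_V(\omega)$ with the $\varphi_i$ corresponding to free generators, so $s=t$ holds throughout $V$, i.e.\ it is derivable from (P1)--(P7). The step I expect to need the most care is precisely the meshing of the two ingredients in the freeness argument --- Jónsson's presentation of $P(\alpha)$ governing the $p$-blocks, and the absolute freeness of $\Fm$ governing everything outside them --- in particular the well-definedness of $\bar g$ on $Y$ and the inductive check that $\bar g$ commutes with every $p_{ij}$; stating the normal-form rewriting precisely enough should make both of these routine.
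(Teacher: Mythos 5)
Your proposal is correct, but it organizes the argument differently from the paper. Both proofs rest on the same two combinatorial ingredients: the normal form obtained by pushing each $p_{ij}$ down to the variables via (P1)--(P4), and the fact that (P5)--(P7) (together with the derived $p_{ij}x=p_{ji}x$) mirror J\'onsson's presentation of the group generated by transpositions, so that a $p$-block is determined modulo the axioms by the permutation it denotes. The paper, however, proceeds purely syntactically: after normalizing, it uses the absolute freeness of $\Fm$ to peel off the outer $\{+,-,c_i,d_{ij}\}$-structure of a valid equation step by step until both sides are terminal terms $p_{i_1j_1}\cdots p_{i_nj_n}x$ and $p_{k_1l_1}\cdots p_{k_ml_m}y$, shows $x=y$ and that the two permutations coincide by evaluating at carefully chosen witness formulas (a formula and its negation, and $v_0=v_1\lor\dots\lor v_0=v_{\alpha-1}$ --- note the witnesses are built from equalities, which is why the paper can remark that no assumption on relation symbols is needed), and then explicitly transforms a (J1)--(J3) derivation into a (P1)--(P7) derivation. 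You instead prove that the subalgebra of $\Fm^+$ generated by the formulas $R_k(v_0,\dots,v_{\alpha-1})$ is the free algebra of $\mathrm{Mod}(\mathrm{(P1)}$--$\mathrm{(P7)})$ on $\omega$ generators and invoke the standard free-algebra/Birkhoff correspondence. Your route is conceptually cleaner and delegates the bookkeeping to the universal mapping property, at the cost of relying on the standing supply of $\alpha$-ary relation symbols and on Birkhoff's machinery, whereas the paper's argument is self-contained, produces an explicit derivation, and is independent of the vocabulary. (One cosmetic slip: you label the completeness direction as the easy ``only if'' half, but it is clear from context which direction you treat as the substance, and that is the correct one.) The two places you flag as needing care --- well-definedness of $\bar g$ on the $p$-blocks and the inductive commutation with $p_{ij}$ --- are exactly where the paper spends its effort in converting the J\'onsson derivation, and they go through as you describe.
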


\begin{proof} Clearly, (P1)--(P7) are true in $\Fm^+$, by the definitions of $p_{ij}$ in this algebra. Assume $\Fm^+\models \sigma=\tau$, we show that $\sigma=\tau$ is derivable from (P1)--(P7).
	
By using (P1)--(P4), we can ``push in" all the $p_{ij}$ in a term to stand in a queue in front of some algebraic variable symbol. 
Let us call a term {\it terminal} if it is of form $p_{i_1j_1}\dots p_{i_nj_n}x$,  and let us call a term {\it normal} if it is built up from terminal terms by the use of $+,-,c_i,d_{ij}$, $i,j<\alpha$.
Thus, by the use of (P1)--(P4), each term provably is equal to a normal one. 

Hence, we may assume that $\sigma$ and $\tau$ are normal. Assume that $\sigma$ is of form $-\sigma^{\prime}$. Then, by  $\Fm^+\models\sigma=\tau$, also $\tau$ has to be of form $-\tau^{\prime}$ and $\Fm^+\models\sigma^{\prime}=\tau^{\prime}$, because $\Fm$ is a term algebra. The analogous facts are true for $+,c_i,d_{ij}$, so we may assume that both $\sigma$ and $\tau$ are terminal terms.

Assume that $\sigma$ is $p_{i_1j_1}...p_{i_nj_n}x$ and $\tau$ is $p_{k_1l_1}...p_{k_ml_m}y$ for some $i_1,\dots,l_m<\alpha$ and algebraic variables $x,y$. By $\Fm^+\models\sigma=\tau$, the variables $x$ and $y$ have to be the same, this can be seen by evaluating $x$ to any formula $\varphi\in Fm$ and $y$ to $\lnot \varphi\in Fm$; we can do this if $x$ and $y$ are distinct variables.  Let $\rho, \eta$ be $[i_1,j_1]\circ\dots\circ [i_n,j_n]$ and $[k_1,l_1]\circ\dots\circ [k_m,l_m]$, respectively. Then $\rho$ and $\eta$ are permutations of $\alpha$, and $\sigma$ and $\tau$ evaluate to $v_{\rho0}=v_{\rho1}\lor\dots\lor v_{\rho0}=v_{\rho(\alpha-1)}$ and to $v_{\eta0}=v_{\eta1}\lor\dots\lor v_{\eta0}=v_{\eta(\alpha-1)}$, respectively, when $x$ is evaluated to $v_{0}=v_{1}\lor\dots\lor v_{0}=v_{\alpha-1}$. Thus, by $\Fm^+\models\sigma=\tau$ we have that $\rho0=\eta0$, \dots, $\rho(\alpha-1)=\eta(\alpha-1)$, i.e., $\rho=\eta$ in $P(\alpha)$.

Thus,  $[i_1,j_1]\circ\dots\circ [i_n,j_n]=[k_1,l_1]\circ\dots\circ [k_m,l_m]$ is true in the semigroup of transpositions, so it can be derived from J\'onsson's three defining relations (J1)--(J3) in \cite{J62}:
\begin{description}
	\item[(J1)] $[i,j]=[j,i]$, 
	\item[(J2)] $[i,j]\circ[i,j]=Id$, 
	\item[(J3)] $[i,j]\circ[i,k]=[j,k]\circ[i,j]$, 
\end{description}
where $i,j,k<\alpha$ are distinct. Now, 
\begin{description}
	\item[(P9)] $p_{ij}x=p_{ji}x$
\end{description}
can be derived from  (P5)--(P7) as follows. First we get $p_{ij}p_{ji}x=p_{ij}p_{ij}x=x$ by substituting $j,i$ in place of $k,l$ in (P5) (notice that in (P1)--(P7) $i,j,k,l$ do not have to be distinct) and then applying (P6).  
Then we get $p_{ji}x=p_{ij}x$ by applying $p_{ij}$ to both sides and using (P6). 
Notice that (J3) is the special case of (P5) where we substitute $i,k$ in place of $k,l$ respectively. 
Let us take a (J1)--(J3) derivation of $[i_1,j_1]\circ\dots\circ [i_n,j_n]=[k_1,l_1]\circ\dots\circ [k_m,l_m]$ and transform it to a (P1)--(P7) derivation of $p_{i_1j_1}...p_{i_nj_n}x=p_{k_1l_1}...p_{k_ml_m}x$ by applying everywhere
 (P9), (P6), (P5),  respectively, where (J1), (J2), (J3), respectively, is used in the (J1)--(J3) derivation.  

We have seen that $\sigma=\tau$ can be derived from (P1)--(P7).	
\end{proof}

Theorem \ref{poz-thm} suggests a new transparent axiom set for polyadic equality algebras of finite dimension. Let CAx be any equational axiom set for cylindric algebras, in the language $+,-,c_i, d_{ij}$, $i,j<\alpha$. For concreteness, let us take axioms (C0)--(C11)  from \cite[p.\ 161]{HeMoT:II}. We do not recall these axioms because we concentrate now on the transposition operations $p_{ij}$. Define
\[ TPEAx = CAx \cup \{ (P1),\dots, (P8)\},\]
and let $TPEA_{\alpha}$ be the class of algebras in which $TPEAx$ is true. Let $PEA_{\alpha}$ denote the class of polyadic equality algebras of dimension $\alpha$ as defined in%
\footnote{On various different style  presentations of polyadic algebras see, e.g., \cite{Cir88, Cir19, Ha, Pi73, Pi73a}.} \cite[Def.\ 5.4.1]{HeMoT:II} and in \cite[Def.\ 5]{SaTho}.  Term-definitional equivalence between classes of algebras is a close connection, for the notion see \cite[0.4.14]{HeMoT:II} or \cite[p.547]{SaTho}.

\begin{theorem}\label{pea-p}  $TPEA_{\alpha}$ is term-definitionally equivalent to $PEA_{\alpha}$.
\end{theorem}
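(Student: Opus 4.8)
The plan is to produce a pair of term translations witnessing term-definitional equivalence in the sense of \cite[0.4.14]{HeMoT:II}. Recall that a member of $PEA_\alpha$ (\cite[Def.\ 5.4.1]{HeMoT:II}, \cite[Def.\ 5]{SaTho}) carries, besides the Boolean operations, the cylindrifications $c_i$, the diagonals $d_{ij}$, and a substitution operation $s_\tau$ for every transformation $\tau$ of $\alpha$ (for finite $\alpha$ the generalized cylindrifications collapse to finite compositions of the $c_i$), whereas a member of $TPEA_\alpha$ carries instead the unary transpositions $p_{ij}$. So the two classes live over different but closely linked similarity types, and what must be produced is (i) $TPEA$-terms defining the operations of $PEA_\alpha$, (ii) $PEA$-terms defining the operations of $TPEA_\alpha$, and (iii) a check that applying the two translations in either order returns the original operations, provably in the respective theories, so that the two classes are carried onto each other.

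For (ii) I would take the obvious translation $\delta$: it fixes $+,-,c_i,d_{ij}$ and sets $\delta(p_{ij}):=s_{[i,j]}$. For (i) I would fix $+,-,c_i,d_{ij}$, abbreviate $c_i(d_{ij}\cdot x)$ by $s_i^j x$ (and $x$ itself by $s_i^i x$), and define a translation $\rho$ on the $s_\tau$ by factoring $\tau$ as a composition $\tau_1\circ\cdots\circ\tau_k$ of transpositions $[i,j]$ and elementary replacements $[i\to j]$ (the transformation fixing every ordinal but $i$, which it sends to $j$) --- possible since these generate the full transformation monoid ${}^\alpha\alpha$ --- and putting $\rho(s_\tau):=\rho(s_{\tau_1})\circ\cdots\circ\rho(s_{\tau_k})$, where $\rho(s_{[i,j]}):=p_{ij}$ and $\rho(s_{[i\to j]}):=s_i^j$. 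The first thing to verify is that $\rho$ is well defined modulo $TPEAx$, i.e.\ the value of $\rho(s_\tau)$ does not depend on the factorization chosen; everything else is then routine induction.

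Granting well-definedness, the remaining verifications go as follows. Each axiom of $PEA_\alpha$ is schematic --- ``$s_{\mathrm{Id}}=\mathrm{id}$'', ``$s_{\sigma\circ\tau}=s_\sigma\circ s_\tau$'', additivity of $s_\tau$, ``$s_\tau d_{ij}=d_{\tau i,\tau j}$'', ``$s_\tau c_k x=\cdots$'', etc.\ --- and I would prove it holds after the translation $\rho$ by induction on the length of a factorization, so that only the generator cases survive: for transpositions these are exactly (P1)--(P8), together with (P9) derived from (P5)--(P7) as in the proof of Theorem \ref{poz-thm} and the commutation of $p_{ij}$ with $s_k^l$ coming from (P3)--(P4); for the replacements $s_i^j=c_i(d_{ij}\cdot-)$ they are standard consequences of $CAx$. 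Conversely, for $\mathfrak B\in PEA_\alpha$ one checks that reading the $p_{ij}$ via $\delta$ yields a model of $TPEAx$: the $+,-,c_i,d_{ij}$-reduct of a $PEA_\alpha$ is a cylindric algebra, so $CAx$ holds, and each of (P1)--(P8) is an instance of the polyadic axioms applied to $s_{[i,j]}$ (for (P5) one also uses the symmetric-group identity $[i,j]\circ[k,l]=[\tau k,\tau l]\circ[i,j]$; for (P8) one uses that $s_{[i,j]}$ is a Boolean endomorphism fixing $d_{ij}$ together with the cylindric theorem $d_{ij}\cdot s_i^j x=d_{ij}\cdot x$). The round-trip conditions are then immediate: $\rho(\delta(p_{ij}))=\rho(s_{[i,j]})=p_{ij}$, and $\delta(\rho(s_\tau))=s_{\tau_1}\circ\cdots\circ s_{\tau_k}=s_\tau$ using $s_{\sigma\circ\tau}=s_\sigma\circ s_\tau$ and $s_{[i\to j]}=s_i^j$.

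The hard part will be the well-definedness of $\rho$, that is, showing that $TPEAx=CAx\cup\{(P1)\text{--}(P8)\}$ proves every relation forced among the $p_{ij}$ and the $s_i^j$ by the monoid structure of ${}^\alpha\alpha$. The clean route is to fix a finite presentation of ${}^\alpha\alpha$ by the transpositions and elementary replacements and to check that each defining relation becomes, under $\rho$, an equation derivable from $TPEAx$: the relations among the transpositions alone are Jónsson's (J1)--(J3) from \cite{J62}, already reduced to (P5)--(P7) in the proof of Theorem \ref{poz-thm}; the relations among the replacements alone are theorems of cylindric algebras; and the mixed relations --- in particular the one making $s_i^j$ and $p_{ij}$ cohere, which is precisely where (P8) is spent --- follow from (P3), (P4) and (P8). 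This is the conceptual punchline: Theorem \ref{poz-thm} already shows (P1)--(P7) captures exactly how transpositions compose and push through the connectives syntactically, and (P8) is the single extra, semantic relation needed to pin down their interaction with the diagonals. An alternative that shortens this step is to first invoke that $PEA_\alpha$ is term-definitionally equivalent to its finitary presentation with primitives $s_i^j$ and $p_{ij}$ (the quasi-polyadic equality algebras), reducing the task to matching the standard finitary axiom list against $CAx\cup\{(P1)\text{--}(P8)\}$ after the definition $s_i^j:=c_i(d_{ij}\cdot-)$.
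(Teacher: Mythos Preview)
Your proposal is correct, and in fact the ``alternative that shortens this step'' you mention in your last sentence is precisely the route the paper takes: it invokes \cite[Theorem 1]{SaTho} to replace $PEA_\alpha$ by the finitary class $FPEA_\alpha$ (with primitives $s^i_j$ and $p_{ij}$ and an explicit finite list of axioms (F0)--(F11)), and then matches that axiom list against $CAx\cup\{(P1)\text{--}(P8)\}$ under the definition $s^i_j x:=c_i(d_{ij}\cdot x)$, exactly as you describe. Your primary route---defining $\rho(s_\tau)$ for arbitrary $\tau\in{}^\alpha\alpha$ via a chosen factorization into transpositions and replacements---is also sound, but as you correctly flag, its cost is the well-definedness check, which amounts to verifying in $TPEAx$ every defining relation of a presentation of the full transformation monoid; the paper sidesteps this by letting \cite{SaTho} absorb that work. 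One small refinement: in the paper's treatment the derivation of (P8) on the $FPEA$ side goes through (F9) and the cylindric identity $c_i(x\cdot d_{ij})\cdot d_{ij}=x\cdot d_{ij}$ rather than directly from ``$s_{[i,j]}$ fixes $d_{ij}$'', which is worth noting since it is the one place where the argument is not a one-line citation.
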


\begin{proof} It is proved in \cite[Theorem 1]{SaTho} that $PEA_{\alpha}$ is term-definitionally equivalent, for finite $\alpha$, to the class $FPEA_{\alpha}$ of finitary polyadic equality algebras.%
	\footnote{\cite[Theorem 1]{SaTho} is stated for a class $QPEA_{\alpha}$, but $QPEA_{\alpha}$ coincides with $PEA_{\alpha}$ for finite $\alpha$.} Therefore, it is enough to  prove that $TPEA_{\alpha}$ is term-definitionally equivalent to $FPEA_{\alpha}$. The language of $FPEA_{\alpha}$ is that of $TPEA_{\alpha}$ together with unary operations $s^i_j$ for $i,j<\alpha$, and $FPEA_\alpha$ is  defined by the following equations, where $i,j,k<\alpha$.
	\begin{description}
		\item[(F0)] $+,-$ form a Boolean algebra, $s^i_ix=p_{ii}x=x$, and $p_{ij}x=p_{ji}x$, 
		\item[(F1)] $x\le c_ix$, 
		\item[(F2)] $c_i(x+y)=c_ix+c_iy$, 
		\item[(F3)] $s^i_jc_ix = c_ix$, 
		\item[(F4)] $c_is^i_jx = s^i_jx$\quad if $i\ne j$, 
		\item[(F5)] $s^i_jc_kx = c_ks^i_jx$\quad if $k\notin\{ i,j\}$,
		\item[(F6)] $s^i_j$ and $p_{ij}$ are Boolean endomorphisms (i.e., $s^i_j(-x) = -s^i_jx$  etc.), 
		\item[(F7)] $p_{ij}p_{ij}x = x$, 
		\item[(F8)] $p_{ij}p_{ik}x = p_{jk}p_{ij}x$\quad if $i,j,k$ are all distinct, 
		\item[(F9)] $p_{ij}s^j_ix = s^i_jx$, 
		\item[(F10)] $s^i_jd_{ij} = 1$, 
		\item[(F11)] $x\cdot d_{ij}\le s^i_jx$.		
	\end{description}
Let us take the interpretation of $FPEA_{\alpha}$ into $TPEA_{\alpha}$ where all operations of $FPEA_{\alpha}$, except for the $s^i_j$, are interpreted to themselves, and we interpret $s^i_j$ as
\[ s^i_ix = x\quad\mbox{ and }\quad s^i_jx=c_i(d_{ij}\cdot x)\quad\mbox{if }i\ne j .\]
We show that (F0)--(F11) hold under this interpretation. Indeed, (F0) holds by  CAx, (P7) and (P9); (F1)--(F5), (F10)--(F11) and the $s^i_j$-part of (F6) follow from CAx; the $p_{ij}$-part of (F6) is (P1)+(P2); and (F7), (F8) are (P6), (P5), respectively. It remains to derive (F9).	Now, for $i\ne j$ we have
$p_{ij}s^j_ix = p_{ij}(c_j(d_{ji}\cdot x))= c_ip_{ij}(d_{ji}\cdot x) = c_i(d_{ij}\cdot x) = s^i_jx$, by the definition of $s^j_i$, (P3), (P8) and CAx, and the definition of $s^i_j$, respectively. 

For the interpretation of $TPEA_{\alpha}$ in $FPEA_{\alpha}$ let us just ``forget" the operations $s^i_j$. We have to show that CAx together with (P1)--(P8) hold in $FPEA_{\alpha}$.  Now, CAx holds in $PEA_{\alpha}$ by \cite[5.4.3]{HeMoT:II}, and in \cite{SaTho} the interpretation of $FPEA_{\alpha}$ in $PEA_{\alpha}$  is such that the cylindric operations $+,-,c_i,d_{ij}$ are interpreted by themselves. Thus CAx holds in $FPEA_{\alpha}$.
(P1)--(P8) hold in $FPEA_{\alpha}$ by the following. (P1), (P2) follow from (F6); and (P6), (P7) follow from (F7), (F0). (P3) and (P4) follow from (Q9) and (E3) of \cite{SaTho}, respectively. (P5) follows from \cite[Claim 1.2]{SaTho} and checking $[i,j]\circ[k,l] = [\tau(k),\tau(l)]\circ[i,j]$ where $\tau=[i,j]$. 
If $i=j$ then $(P8)$ holds trivially by (P7), which we already have checked. Assume $i\ne j$, then (P8) follows from (F9) by the following. The operation $s^i_j$ of $FPEA_{\alpha}$ is interpreted in $PEA_{\alpha}$ by the operation $s_{[i/j]}$ and \cite[5.4.3]{HeMoT:II} states that $s_{[i/j]}x=c_i(d_{ij}\cdot x)$ holds in $PEA_{\alpha}$. Hence, $s^i_jx=c_i(d_{ij}\cdot x)$ holds in $FPEA_{\alpha}$. Now $p_{ij}s^j_ix = s^i_jx$ by (F9), so $(p_{ij}s^j_ix)\cdot d_{ij} = (s^i_jx)\cdot d_{ij}$. Note that CAx implies $c_i(x\cdot d_{ij})\cdot d_{ij}=x\cdot d_{ij}$, see \cite[1.3.9]{HeMoT:II}.
Now, 
$(p_{ij}s^j_ix)\cdot d_{ij} = p_{ij}c_j(x\cdot d_{ij})\cdot d_{ij} = c_ip_{ij}(x\cdot d_{ij})\cdot d_{ij} = c_i(p_{ij}x\cdot d_{ij})\cdot d_{ij} = p_{ij}x\cdot d_{ij} = p_{ij}(x\cdot d_{ij})$.  Similarly,
$s^i_jx\cdot d_{ij} = c_i(x\cdot d_{ij})\cdot d_{ij} = x\cdot d_{ij}$, and we are done with showing that (P8) holds in $FPEA_{\alpha}$. 

For showing term-definitional equivalence of $TPEA_{\alpha}$ and $FPEA_{\alpha}$ it is enough to show that the two interpretations above are inverses of each other. This follows from $FPEA_{\alpha}\models s^i_j(x)=c_i(d_{ij}\cdot x)$ for distinct $i,j$.
\end{proof}

\bsk

We now state a theorem about complexity of complete derivation systems for finite-variable logic. In the presentation, we shall closely follow Monk's paper \cite{mo2}.

Let us recall $\alpha$-variable first-order logic $L_{\alpha}$. There are infinitely many ${\alpha}$-place relation symbols $R_i$, $i<\omega$ in the language. The set of variables is $V=\{ v_0,\dots,v_{\alpha-1}\}$. The relational atomic formulas are $R_k(v_{i_1},\dots,v_{i_{\alpha}})$ for $k<\omega$ and $i_1,\dots,i_{\alpha}<\alpha$, the rest of the atomic formulas are $v_i=v_j$ for $i,j<\alpha$. A formula is built up from atomic formulas by the use of the unary logical connectives $\lnot$ and $\exists v_i$ together with the binary logical connective $\lor$. We use derived logical connectives, such as $\land, \to, \forall v_i$, as is usual. The set of formulas is denoted by $Fm$. 
{\it Detachment} is the derivation rule according to which we can infer $\psi$ from $\{\varphi, \varphi\to\psi\}$, for any $\varphi,\psi\in Fm$. {\it Generalization} is the derivation rule according to which we can infer $\forall v_i\varphi$ from any formula $\varphi\in Fm$  and $i<\alpha$. 

We will also use a derivation rule {\it Inst} which allows substituting arbitrary formulas in place of the relational atomic formulas.%
\footnote{The more derivation rules we use, the stronger Theorem \ref{log-t} will be.} The only requirement is that if we replace, say, $R_i(v_0,\dots,v_{\alpha-1})$  with $\varphi$, then we have to replace $R_i(v_{\tau 0},\dots,v_{\tau(\alpha-1)})$ by a version $S(\tau)\varphi$ of $\varphi$ in which we systematically replace the variables $v_j$ by $v_{\tau j}$. Since here $\tau$ may not be a bijection and we want to be compatible with semantics, the substitution involves renaming of bound variables. We recall the definition of $S(\tau)\varphi$ from \cite{mo2}.
\begin{description}
	\item{} $S(\tau)R(v_{i_1},\dots,v_{i_{\alpha}})$\ \  is\ \ $R(v_{\tau(i_1)},\dots,v_{\tau(i_{\alpha})})$, \quad $S(\tau)v_i=v_j$\ \  is\ \  $v_{\tau(i)}=v_{\tau(j)}$,
	\item{} $S(\tau)\lnot\varphi$\ \ is\ \ $\lnot S(\tau)\varphi$,\quad  $S(\tau)(\varphi\lor\psi)$\ \ is\ \ $S(\tau)\varphi\lor S(\tau)\psi$,\quad and
	\item{} $S(\tau)\exists v_i\varphi$\ \ is\ \ $\exists v_j S(\sigma)\varphi$\qquad where $j$ is the least element of $\alpha-\{\tau(k)\mid i\ne k<\alpha\}$ and $\sigma(i)=j$, $\sigma(k)=\tau(k)$ for all $k\ne i$.
\end{description}
Now, if $\delta$ is any assignment of formulas to relation symbols, an {\it instance} of $\varphi$ is the formula we get from $\varphi$ by substituting $S(\tau)\delta(R_i)$ in place of $R_i(v_{\tau(0)},\dots,v_{\tau(\alpha-1)})$, simultaneously for all relation symbols $R_i$. If $\Gamma$ is a set of formulas, then $Inst(\Gamma)$ denotes the set of all instances of members of $\Gamma$.

We do not recall here the semantical notions of models and truth. A formula is called valid, or a {\it tautology}, iff it is true in each model under each evaluation of the variables. A {\it complete axiom system} for $L_{\alpha}$ is a set $\Gamma$ of tautologies such that all other tautologies can be derived by a series of use of Detachment and Generalization from $Inst(\Gamma)$. The theorem in \cite{mo2} states that a complete axiom system has to be infinite. The following theorem states a stronger property of complete axiom systems.

\begin{theorem}\label{log-t} Any complete axiom system for $L_{\alpha}$ must contain for any $n<\omega$ a formula $\varphi$ with the following three properties:
	\begin{description}
		\item[(i)] at least $n$ distinct relation symbols occur in $\varphi$,
		\item[(ii)] both $R_i(v_{\tau 0},\dots,v_{\tau(\alpha-1)})$ and $R_i(v_{\sigma 0},\dots,v_{\sigma(\alpha-1)})$ occur in $\varphi$, for some $i<\omega$ and distinct permutations $\tau, \sigma$ of $\alpha$,
		\item[(iii)] existential quantifier $\exists$, equality symbol $=$ and disjunction $\lor$ all occur in $\varphi$.
		\end{description}	
\end{theorem}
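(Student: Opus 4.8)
The plan is to reduce Theorem \ref{log-t} to Theorem \ref{t:2} (equivalently, to Theorem \ref{t:1}), in the spirit of Monk's argument in \cite{mo2}. The bridge is the translation of $L_\alpha$ into polyadic equality set algebras: to a formula $\varphi$ associate a term $\widehat\varphi$ by replacing the atomic formula $R_k(v_0,\dots,v_{\alpha-1})$ by an algebraic variable $x_k$, the connectives $\lor,\lnot,\exists v_i,(v_i{=}v_j)$ by $+,-,c_i,d_{ij}$, and the atomic formula $R_k(v_{\tau 0},\dots,v_{\tau(\alpha-1)})$ obtained by an arbitrary tuple $\tau\colon\alpha\to\alpha$ of variables by the substitution term $s_\tau(x_k)$; when $\tau$ is a permutation this $s_\tau$ is a composite of the transposition operations $p_{ij}$, precisely because transposing variables in a formula is what the $p_{ij}$ do. Under this translation $\varphi$ is a tautology iff $\widehat\varphi=1$ holds in $Pse_\alpha$, and the three derivation rules Inst, Detachment, Generalization translate into term substitution, Boolean inference inside a Boolean algebra, and ``apply $c_i$, use $c_i 0=0$'' --- rules under which the equational theory of \emph{every} Boolean algebra with operators satisfying the cylindric axioms together with (P1)--(P8) is closed. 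So I would fix $n$, assume toward a contradiction that some complete axiom system $\Gamma$ for $L_\alpha$ has no member satisfying all of (i)--(iii), pick an odd prime power $p>2^{\alpha!n}+1$, take ${\cal A}={\cal A}_p$ from Theorem \ref{t:2}, and use that ${\cal A}\in RCPEA_\alpha$ (its cylindric reduct is a set algebra and it satisfies (P1)--(P8)), so its equational theory is closed under the translated rules. Once we know $\widehat\varphi=1$ holds in ${\cal A}$ for every $\varphi\in\Gamma$, closure gives $\widehat\psi=1$ in ${\cal A}$ for every tautology $\psi$, hence ${\cal A}$ satisfies every equation valid in $Pse_\alpha$; but that makes ${\cal A}$ representable, contradicting the nonrepresentability of ${\cal A}_p$ established in Section \ref{s:4}.

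The core is thus to verify, for each axiom $\varphi\in\Gamma$, that $\widehat\varphi=1$ holds in ${\cal A}$, according to which of (i)--(iii) fails. If (i) fails, fewer than $n$ relation symbols occur in $\varphi$, so $\widehat\varphi$ has fewer than $n$ variables; any evaluation of it lands inside an $n$-generated subalgebra of ${\cal A}$, which is isomorphic to a member of $Pse_\alpha$ by Theorem \ref{t:2}(ii), and there the $Pse_\alpha$-valid equation $\widehat\varphi=1$ holds. If (iii) fails, $\varphi$ omits one of $\exists,=,\lor$, so $\widehat\varphi$ omits the corresponding one of $c_i,d_{ij},+$; in the $\exists$-free and $=$-free cases $\widehat\varphi=1$ holds in the cylindrification-free, resp.\ diagonal-free, reduct of ${\cal A}$, which by Theorem \ref{t:2}(iii) embeds into the corresponding reduct of a member of $Pse_\alpha$; and in the $\lor$-free case one checks that the $+$-free equations valid in $Pse_\alpha$ follow already from the Boolean and cylindric axioms together with (P1)--(P8), hence hold in ${\cal A}$. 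The remaining case is when (ii) fails, i.e.\ $\varphi$ uses at most one permutation of variables for each relation symbol; here I would argue that every occurrence of a transposition operation in $\widehat\varphi$ can be absorbed or pushed onto a subterm of dimension $<\alpha$: each substitution $s_\tau$ coming from a repeated-variable tuple factors as a permutation substitution composed with a genuinely cylindric substitution $s^i_jx=c_i(d_{ij}\cdot x)$, the single admissible permutation per relation symbol is normalised away using (P5)--(P7), and what remains places $\widehat\varphi=1$ among the consequences of the cylindric axioms and (P1)--(P8), so it holds in ${\cal A}$.

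The main obstacle is exactly this last case. The difficulty is that ``at most one permutation of variables per relation symbol'' does not by itself purge the transposition operations from $\widehat\varphi$: a repeated-variable atomic formula such as $R_k(v_1,v_0,v_0,v_3,\dots)$ already forces a $p_{01}$ into $\widehat\varphi$, because its substitution genuinely interchanges two coordinates. So the content of this case is a normal-form analysis showing that whatever transposition content survives in $\widehat\varphi$ acts only on elements whose dimension set is a proper subset of $\alpha$, together with the observation that the modification turning the set algebra ${\cal A}^s$ into ${\cal A}_p$ --- the very thing that destroys representability --- alters the transposition operations only on atoms of full dimension; hence an equation valid in $Pse_\alpha$ whose transposition content is confined to lower-dimensional pieces cannot detect that modification and must hold in ${\cal A}_p$. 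Getting this dimension bookkeeping right, and checking that it matches property (ii) as stated (distinct \emph{permutations}, not merely distinct tuples), is the part I expect to require the most care; with it in hand, the closure argument of the first paragraph finishes the proof.
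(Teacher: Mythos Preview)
Your overall reduction and the treatment of case (i) match the paper's approach. The real divergence is in case (ii), and here the paper has a much cleaner device that you are missing. Rather than any ``dimension bookkeeping'', the paper exploits the rule $Inst$ at the formula level: if $R_i$ occurs in $\varphi$ with at most one permutation $\tau$, replace those occurrences by $R_i(v_0,\dots,v_{\alpha-1})$ to get $\overline\varphi$. Since any instance of $\varphi$ obtained by substituting $\psi$ for $R_i$ equals the instance of $\overline\varphi$ obtained by substituting $S(\tau)\psi$ for $R_i$, one has $Inst(\varphi)=Inst(\overline\varphi)$, so $\overline\Gamma=\{\overline\varphi:\varphi\in\Gamma\}$ is again a complete axiom system. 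Now for every $\varphi$ failing (ii), the translated term $T(\overline\varphi)$ contains no transposition operation at all, and the transposition-free (cylindric) reduct of ${\cal A}_p$ --- which is a genuine set algebra --- disposes of it. Your phrase ``the single admissible permutation per relation symbol is normalised away'' points in the right direction, but the clean move is this preprocessing via $Inst$, not an algebraic argument inside ${\cal A}_p$; the accompanying talk of atoms of full dimension is a red herring.

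Your treatment of (iii) also has gaps. The translation of a non-bijective atomic formula $R_k(v_{\rho 0},\dots,v_{\rho(\alpha-1)})$ uses the cylindric substitutions $s^i_j x=c_i(d_{ij}\cdot x)$, so an $\exists$-free (resp.\ $=$-free) $\varphi$ can still produce a term $\widehat\varphi$ containing $c_i$ (resp.\ $d_{ij}$); you therefore cannot simply invoke the cylindrification-free or diagonal-free reducts of ${\cal A}_p$. For the $\lor$-free subcase, your assertion that ``the $+$-free equations valid in $Pse_\alpha$ follow already from the Boolean and cylindric axioms together with (P1)--(P8)'' is unsupported; the correct and trivial observation is that a $\lor$-free formula has at most one atomic subformula, hence at most one relation symbol, so it is already covered by case (i) for $n\ge 2$.
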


\begin{proof}
	The argument follows \cite{mo2}, except that, in place of Johnson's theorem in \cite{Jo} we use our stronger Theorem \ref{t:1}. 
	Assume that $\Gamma$ is a complete axiom system for $L_{\alpha}$. 
	
	We translate $\Gamma$ into  a set  $Eq(\Gamma)$ of equations and we prove that $Eq(\Gamma)$ is an axiomatization for $Pse_{\alpha}$. Let $X=\{ x_i \mid i<\omega\}$ be a set of algebraic variables. We define, for any $\varphi\in Fm$ a term $T(\varphi)$ in the algebraic language of $Pse_{\alpha}$ as follows.
 Recall that a transposition [i,j] is the bijection of $\alpha$ which interchanges $i$ and $j$ and leaves all other elements of $\alpha$ fixed, and a replacement [i/j] is the function that takes $i$ to $j$ and leaves all other elements of $\alpha$ fixed. Let $\tau:\alpha\to\alpha$. It is known that if $\tau$ is a bijection, then it is a composition of transpositions, and if $\tau$ is not a bijection then it is a composition of replacements; see, e.g., \cite{J62} and \cite{Tho}. For any $\tau$ fix such a sequence of transpositions or replacements. Let $i<\omega$. Assume that $\tau=[i_1,j_1]\circ\dots\circ[i_k,j_k]$, then
\[ T(R_i(v_{\tau 0},\dots,v_{\tau(\alpha-1)}))\quad\mbox{ is }\quad p_{i_1j_1}...p_{i_kj_k}x_i ,\]
assume that $\tau=[i_1/j_1]\circ\dots\circ[i_k/j_k]$, then
\[ T(R_i(v_{\tau 0},\dots,v_{\tau(\alpha-1)}))\quad\mbox{ is }\quad s_{i_1j_1}...s_{i_kj_k}x_i ,\]
where $s_{ij}x = c_i(d_{ij}\cdot x)$ if $i,j$ are distinct. Let $i,j<\alpha$. Then $T(v_i=v_j)$ is $d_{ij}$, $T(\lnot\varphi)$ is $-T(\varphi)$, $T(\varphi\lor\psi)$ is $T(\varphi)+T(\psi)$, and $T(\exists v_i\varphi)$ is $c_iT(\varphi)$. By this, the function $T$ has been defined.  One can prove, by induction, just as in \cite{mo2}, that $\models\varphi$ iff $Pse_{\alpha}\models T(\varphi)=1$, where $1$ is the Boolean constant. We define%
\footnote{We note that in \cite{mo2} there is a typo in the definition of the analogous set of equations. Namely, the polyadic equations have to be added because they are used in the proof of Lemma 14 there.}
\[ Eq(\Gamma) = \{ T(\varphi)=1 \mid \varphi\in\Gamma\} \cup TPEAx .\]
Now, $Pse_{\alpha}\models Eq(\Gamma)$.

Let ${\cal A}\models Eq(\Gamma)$, we will show that ${\cal A}$ is representable. Define $\Sigma=\{\varphi \mid {\cal A}\models T(\varphi)=1\}$. Then $\Gamma\subseteq\Sigma$, by our assumption. One can see that $\varphi\in\Sigma$ implies $Inst(\varphi)\subseteq\Sigma$, by the definition of an equation being true in an algebra. Finally, $\Sigma$ is closed under Detachment and Generalization, by $TPEAx\subseteq Eq(\Gamma)$. Hence, $\Sigma$ contains all tautologies, by our assumption that $\Gamma$ is complete. We now will use another translation function that is more or less the inverse of $T$. First we translate normal terms in the language of $Pse_{\alpha}$ to formulas. 
Assume that $\tau=[i_1,j_1]\circ\dots\circ[i_k,j_k]$, then
\[ F(p_{i_1j_1}...p_{i_kj_k}x_i) \quad\mbox{ is }\quad R_i(v_{\tau 0},\dots,v_{\tau(\alpha-1)}),\]
and $F(d_{ij})$ is $v_i=v_j$, $F(-\sigma)$ is $\lnot F(\sigma)$, $F(\sigma+\delta)$ is $F(\sigma)\lor F(\delta)$, and $F(c_i\sigma)$ is $\exists v_i F(\sigma)$. One can prove that $\models F(\sigma)$ iff $Pse_{\alpha}\models\sigma=1$, and $TPEAx\models \sigma=TF(\sigma)$. 
Now, to show that ${\cal A}$ is representable, it is enough to show that ${\cal A}\models\sigma=1$ whenever $Pse_{\alpha}\models\sigma=1$ and $\sigma$ is normal.
Indeed, $Pse_{\alpha}\models\sigma=1$ implies that $\models F(\sigma)$, which implies that ${\cal A}\models TF(\sigma)=1$, i.e., ${\cal A}\models\sigma=1$ by ${\cal A}\models TPEAx$. 
We have seen that $Eq(\Gamma)$ is an equational axiom set for $Pse_{\alpha}$. We can use now Theorem \ref{t:1}.

Let us say that $R_i$ occurs in $\varphi$ twice, if both $R_i(v_{\tau0},\dots,v_{\tau(\alpha-1)})$ and $R_i(v_{\sigma0},\dots,v_{\sigma(\alpha-1)})$ occur in $\varphi$ for distinct bijections $\tau,\sigma$ of $\alpha$. For any formula $\varphi$, let $\overline{\varphi}$ denote the formula we obtain from $\varphi$ by replacing $R_i(v_{\tau0},\dots,v_{\tau(\alpha-1)})$ in it with $R_i(v_0,\dots,v_{\alpha-1})$ whenever $R_i$ does not occur twice in $\varphi$, simultaneously, and let $\overline{\Gamma}=\{\overline{\varphi} \mid \varphi\in\Gamma\}$. Note that transposition operations do not occur in $T(\overline{\varphi})$ if there is no $R_i$ that occurs twice in $\varphi$. Now, $Inst(\varphi)=Inst(\overline{\varphi})$ by the definition of $Inst$, hence $\overline{\Gamma}$ is also a complete axiom system for $L_{\alpha}$. Let $n\ge 3$ and let us call a formula {\it complex} if (i), (iii) of Theorem \ref{log-t} hold for it. Assume that there is no complex formula in $\Gamma$ in which some $R_i$ occurs twice. Then there is no equation $e_n$ in $Eq(\overline{\Gamma})$ with the properties required by  Theorem \ref{t:1}, which is a contradiction since we have seen that $Eq(\overline{\Gamma})$ is an equational axiom set for $Pse_{\alpha}$. So, there is a complex formula $\varphi\in\Gamma$ in which some $R_i$ occurs twice, and we are done.
\end{proof}

For the next theorem to be true, it is necessary to assume that there are infinitely many $\alpha$-place relation symbols in the logical language. Proposition~\ref{pse-prop} below is basically known in algebraic logic, see \cite{mo2}, \cite[Sec.\ 4.3]{HeMoT:II}  and \cite[Ex.\ 5.3]{AGyNS}. We include a proof because this proposition establishes the connection between our logical and algebraic results, in particular, it was used in proving Theorem~\ref{th:0} from Theorem \ref{t:1} (see Section \ref{s:1}). Recall that $\equiv$ was defined just before (P8) in Section \ref{s:1}: $\varphi\equiv\psi$ means that $\varphi\leftrightarrow\psi$ is valid.

\begin{prop}\label{pse-prop}
	An equation is true in \mbox{$\Fm^+\slashm\fequiv$} if and only if it is true in $Pse_{\alpha}$, and an equation is true in \mbox{$\Fm\slash\fequiv$} iff it is true in $Cs_{\alpha}$.
\end{prop}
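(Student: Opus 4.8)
The plan is to establish the two equivalences by a back-and-forth translation between formulas and algebraic terms, using that both $\Fm^+$ and $\Fm$ are term algebras (absolutely free over the relation symbols). I would first set up a translation $T$ that sends a first-order formula $\varphi$ to an algebraic term $T(\varphi)$ in the polyadic language: atomic relational formulas $R_i(v_{\tau 0},\dots,v_{\tau(\alpha-1)})$ go to $p_{i_1j_1}\cdots p_{i_kj_k}x_i$ where $\tau=[i_1,j_1]\circ\cdots\circ[i_k,j_k]$ (choosing a fixed decomposition for each permutation $\tau$ of $\alpha$ — here $\tau$ is a permutation because the argument list of an atomic formula uses each variable exactly once is \emph{not} assumed, so in general $\tau$ is an arbitrary function and one uses replacements $s_{ij}$ when $\tau$ is not a bijection, exactly as in the proof of Theorem~\ref{log-t}), $v_i=v_j\mapsto d_{ij}$, and $\lnot,\lor,\exists v_i$ going to $-,+,c_i$. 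Conversely, a translation $F$ on normal terms reverses this. The core semantic fact, provable by a routine induction on formula structure, is that for any evaluation of the variables into a $Pse_{\alpha}$ and the ``same'' assignment of relations, $\varphi$ and $T(\varphi)$ denote the same set; from this one gets that $\varphi\equiv\psi$ iff $Pse_{\alpha}\models T(\varphi)=T(\psi)$.

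Next I would run the argument in each direction. For the forward direction of the first equivalence, suppose an equation $\sigma=\delta$ is true in $\Fm^+\slashm\fequiv$; I want it true in every member of $Pse_{\alpha}$. Given a $Pse_{\alpha}$ with base set $U$ and an evaluation of the algebraic variables, because there are infinitely many $\alpha$-place relation symbols I can choose first-order formulas $F(\sigma),F(\delta)$ realizing these algebraic values via a suitable model on $U$; since $\Fm^+\models_{\fequiv}\sigma=\delta$ translates to validity of $F(\sigma)\leftrightarrow F(\delta)$, the two sets coincide. Conversely, if $Pse_{\alpha}\models\sigma=\delta$ then in particular this holds when algebraic variables are interpreted as the meanings of relation symbols in an arbitrary first-order model over an arbitrary base set, so $\varphi\leftrightarrow\psi$ is valid whenever $T(\varphi)=\sigma$, $T(\psi)=\delta$; hence the equation is true in $\Fm^+\slashm\fequiv$. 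The subtlety requiring the infinitely-many-relation-symbols hypothesis is precisely that, to witness an arbitrary tuple of algebraic values by a tuple of formula-meanings, one needs enough ``independent'' relation symbols; with only finitely many, some tuples of subsets of $U^\alpha$ are not jointly realizable as meanings of formulas on a fixed base. The diagonal-free version for $\Fm\slash\fequiv$ and $Cs_\alpha$ is the same argument with the transposition operations (and the $p_{ij}$ in atomic translations — here every atomic formula's argument list, if we allow repeats, still translates using only the $s_{ij}$, which are cylindric terms) deleted throughout.

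The main obstacle I expect is bookkeeping around the translation $T$ when the argument sequence $\langle i_1,\dots,i_\alpha\rangle$ of an atomic formula is not injective, so that the associated map $\tau:\alpha\to\alpha$ is not a permutation: one must use replacement terms $s_{ij}x=c_i(d_{ij}\cdot x)$ rather than transpositions, and check that the chosen fixed decomposition of each $\tau$ into transpositions or replacements gives a well-defined term whose meaning matches $R_i(v_{\tau 0},\dots,v_{\tau(\alpha-1)})$ in every $Pse_\alpha$; this is the same $T$ already used in the proof of Theorem~\ref{log-t}, so it can be invoked. A second, minor point is the soundness direction — that all equations \emph{valid} in $Pse_\alpha$ really do translate back to tautologies — which uses that (P1)--(P8) together with $CAx$ hold in $Pse_\alpha$ (Theorem~\ref{pea-p} and the fact that polyadic equality set algebras validate $TPEAx$), so that $TF(\sigma)$ and $\sigma$ are provably, hence semantically, equal. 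Once these translation lemmas are in place the two stated equivalences follow immediately, and I would simply remark that the diagonal-free case is obtained by restricting the language and omitting the $p_{ij}$-clauses from $T$ and $F$.
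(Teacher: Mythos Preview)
Your plan works, but it imports heavier machinery than the paper uses. The paper's proof avoids the term--formula translations $T$ and $F$ altogether: it observes that for each model $\mathfrak{M}$ the meaning function $mng_{\mathfrak{M}}:\Fm^+\to\mathcal{A}$ (with $\mathcal{A}\in Pse_\alpha$) is a homomorphism, and that $\equiv$ is precisely the intersection of the kernels of all such meaning functions. This immediately gives the direction $Pse_\alpha\Rightarrow\Fm^+\slashm\fequiv$, since $\Fm^+\slashm\fequiv$ is then a subdirect product of members of $Pse_\alpha$. For the converse the paper argues contrapositively: given a failing evaluation $k$ in some $\mathcal{A}\in Pse_\alpha$ with base $U$, pick a model $\mathfrak{M}$ on $U$ with $R_i^{\mathfrak{M}}=k(x_i)$ (here is where infinitely many $\alpha$-ary relation symbols are needed), and then $mng_{\mathfrak{M}}$ factors through $\Fm^+\slashm\fequiv$ to give a homomorphism $m$ with $m\circ h=k$, where $h(x_i)=[R_i(v_0,\dots,v_{\alpha-1})]_\equiv$; so the equation fails at $h$. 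Your route via $T$ and $F$ reaches the same conclusion, but the bookkeeping you flag about non-injective argument sequences in atomic formulas is a red herring here --- one never needs to translate arbitrary formulas to terms, only to evaluate terms in $\Fm^+\slashm\fequiv$ and push the result through a meaning homomorphism. Also, your backward direction as written only exhibits one evaluation (the canonical $x_i\mapsto[R_i(\bar v)]_\equiv$) at which the equation holds; to conclude it holds under \emph{all} evaluations you would still need either the substitution-of-formulas argument or, more simply, exactly the meaning-homomorphism observation the paper uses.
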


\begin{proof}  The $\Fm\slash\fequiv$ part of the theorem immediately follows from 3.3.12 and 5.4.1 of \cite{AGyNS}, or from \cite[Sec.\ 4.3]{HeMoT:II}. The other part is not so immediate, because $\Fm^+\slashm\fequiv$ is not a Lindenbaum-Tarski algebra, since the $p_{ij}$ operations do not come from connectives in a logic. We shall use the terminology of \cite[Section~3]{AGyNS}. Our logic is Example 5.3 there, and it is called ${\alpha}$-variable logic with substituted atomic formulas. 
	Let ${\cal R}$ denote the set of relation symbols, then a model is $\Mm=\langle U, R^{\Mm}\rangle_{R\in{\cal R}}$. For a model like this, let its {\it meaning function} be defined as
	$$ mng_{\Mm}(\varphi) = \{ s\in U^{\alpha} \mid \Mm\models\varphi[s]\} ,$$
	see, e.g., \cite{mo2}. It is routine to check that $mng_{\Mm}:\Fm^+\to{\cal A}$ is a homomorphism, where ${\cal A}$ is a polyadic equality set algebra with base set $U$. By definition, $\equiv$ is the intersection of the kernels of all the meaning functions. So, if an equation is true in $Pse_{\alpha}$ then it is true in $\Fm^+\slashm\fequiv$, too.	
	
	Assume  now that an equation $e$ fails in $Pse_{\alpha}$, say, ${\cal A}\not\models e[k]$ with the evaluation $k$ of the algebraic variables $\{ x_1,\dots,x_n\}$ occurring in $e$ in ${\cal A}\in Pse_{\alpha}$ with base set $U$. Take any model $\Mm=\langle U,R^{\Mm}\rangle_{R\in{\cal R}}$ such that $R_i^{\Mm}=k(x_i)$ for $1\le i\le n$, for some system $R_i, 1\le i\le n$ of $\alpha$-place relation symbols. There is such a model by our assumption on having infinitely many $\alpha$-place relation symbols. Take an evaluation of the algebraic variables in $\Fm^+\slashm\fequiv$ such that $h(x_i)=R_i(v_0,\dots,v_{\alpha-1})\slash\fequiv$ for $1\le i\le n$.  Now, $mng_{\Mm}$ induces a homomorphism  $m:\Fm^+\slashm\fequiv\,\,\to{\cal A}$ such that $m(R_i(v_0,\dots,v_{\alpha-1})\slash\fequiv)\,\, = k(x_i)$, and $k=m\circ h$. Thus $\Fm^+\slashm\fequiv\,\,\not\models e[h]$ by ${\cal A}\not\models e[k]$.	
\end{proof}

\section*{Acknowledgement} We are grateful for the many useful suggestions, remarks and corrections that the two referees provided.

\end{document}